\documentclass[a4paper]{amsart}
\usepackage[margin=30mm]{geometry}
\usepackage{microtype}

\usepackage[all]{xy}

\usepackage{amssymb, amsthm, amsmath, graphics, indentfirst, paralist, sseq, mathtools, float}

\usepackage[scr=boondox]{mathalfa}

\theoremstyle{plain}

\newtheorem{thm}{Theorem}[section]
\newtheorem{prop}[thm]{Proposition}
\newtheorem{lem}[thm]{Lemma}

\theoremstyle{definition}
\newtheorem{defin}[thm]{Definition}
\newtheorem{ex}[thm]{Example}

\theoremstyle{remark}
\newtheorem{rem}[thm]{Remark}

\begin{document}

\newcommand{\Ker}{\mathop{\mathrm{Ker}}\nolimits}
\newcommand{\Image}{\mathop{\mathrm{Im}}\nolimits}
\newcommand{\rk}{\mathop{\mathrm{rk}}\nolimits}
\newcommand{\sk}{\mathop{\mathrm{sk}}\nolimits}
\newcommand{\fr}{\mathrm{fr}}
\newcommand{\id}{\mathop{\mathrm{id}}\nolimits}
\newcommand{\ad}{\mathop{\mathrm{ad}}\nolimits}
\newcommand{\interior}{\mathop{\mathrm{int}}\nolimits}
\newcommand{\Hom}{\mathop{\mathrm{Hom}}\nolimits}
\newcommand{\Tor}{\mathop{\mathrm{Tor}}\nolimits}
\newcommand{\Ext}{\mathop{\mathrm{Ext}}\nolimits}
\newcommand{\Aut}{\mathop{\mathrm{Aut}}\nolimits}
\newcommand{\RU}{\mathop{\mathrm{RU}}\nolimits}
\newcommand{\st}{\mathop{\mathrm{st}}\nolimits}
\newcommand{\HH}{\mathcal{H}}
\newcommand{\Z}{\mathbb{Z}}
\newcommand{\R}{\mathbb{R}}
\newcommand{\C}{\mathbb{C}}
\newcommand{\UM}{\underline{M}}
\newcommand{\UMp}{\underline{M}{}'}
\newcommand{\UMpp}{\underline{M}{}''}
\newcommand{\UtM}{\underline{\tilde{M}}}
\newcommand{\UbM}{\underline{\bar{M}}}
\newcommand{\UN}{\underline{N}}
\newcommand{\UNp}{\underline{N}{}'}
\newcommand{\UH}{\underline{H}}
\newcommand{\UR}{\underline{R}}
\newcommand{\URp}{\underline{R}{}'}
\newcommand{\UE}{\underline{E}}
\newcommand{\UEs}{\underline{E}{}^*}
\newcommand{\UV}{\underline{V}}
\newcommand{\UVp}{\underline{V}{}'}
\newcommand{\UVs}{\underline{V}{}^*}
\newcommand{\UhV}{\underline{\hat{V}}}
\newcommand{\UhVs}{\underline{\hat{V}}{}^*}
\newcommand{\UY}{\underline{Y}}
\newcommand{\UZ}{\underline{Z}}
\newcommand{\UP}{\underline{P}}
\newcommand{\UPs}{\underline{P}{}^*}
\newcommand{\UbP}{\underline{\bar{P}}}
\newcommand{\UbPs}{\underline{\bar{P}}{}^*}
\newcommand{\gc}{\mathfrak{g}}
\newcommand{\gs}{\mathscr{g}}
\newcommand{\s}{\mathscr{s}}
\newcommand{\si}{\mathrel{\underset{^{\scriptstyle{s}}}{\cong}}}
\newcommand{\QQ}{\mathcal{Q}}
\newcommand{\MM}{\mathcal{M}}
\newcommand{\II}{\mathcal{I}}
\newcommand{\JJ}{\mathcal{J}}
\newcommand{\EE}{\mathcal{E}}

\newcommand{\fp}{F}
\newcommand{\tp}{T}

\newcommand{\Diff}{\mathop{\mathrm{Diff}}\nolimits}
\newcommand{\Homeo}{\mathop{\mathrm{Homeo}}\nolimits}
\newcommand{\dd}{\underline{d}}

\newcommand{\xra}{\xrightarrow}

\title{Realising automorphisms of the extended intersection form by diffeomorphisms}
\author{Csaba Nagy}
\email{nagy@mpim-bonn.mpg.de}
\address{Max Planck Institute for Mathematics, Vivatsgasse 7, 53111 Bonn, Germany}

\begin{abstract}
Suppose that $f : M \rightarrow B$ is a normal $(q{-}1)$-smoothing of a $2q$-manifold $M$ over some $(B,\xi)$, where $q$ is even and $B$ is simply-connected. A diffeomorphism of $M$ over $B$ induces an automorphism of the extended intersection form (or ``Q-form") of $f$, which consists of the intersection form of $M$ and the induced homomorphism $H_q(f)$. Assuming that $H_q(B)$ is free, we determine precisely which automorphisms can be realised by such diffeomorphisms. In particular, if $H_{q-1}(B)$ is also free, then we show that every automorphism can be realised. These results are obtained by studying a ``two-sided" version of the extended surgery obstruction, which was introduced in earlier work of the author. 

As an application, we show that for a complex $q$-dimensional complete intersection, with $q > 2$ even, every automorphism of the cohomology ring is realised by a diffeomorphism. 
\end{abstract}

\maketitle

\section{Introduction}

\subsection{Main results}

A diffeomorphism of a $2q$-manifold $M$ induces an automorphism of its intersection form, and the resulting map $\pi_0 \Diff^+(M) \rightarrow \Aut(H_q(M),\lambda_M)$ is often a key tool in studying the mapping class group of $M$. We will consider the analogous map in the context of normal $(q{-}1)$-smoothings. 

Suppose that $M$ is a smooth, closed, oriented, simply-connected $2q$-manifold for $q \geq 2$ even. Let $f : M \rightarrow B$ be a normal $(q{-}1)$-smoothing over some $(B,\xi)$, that is, a $q$-connected map covered by a map $\bar{f} : \nu_M \rightarrow \xi$ of stable bundles (for example, $(B,\xi)$ could be the normal $(q{-}1)$-type of $M$ and $f$ the canonical normal $(q{-}1)$-smoothing). Then the \emph{Q-form} (or extended intersection form) of $f$, 
\[
E_q(M,f) = (H_q(M),\lambda_M,H_q(f))
\]
consists of the intersection form of $M$ together with the induced map $H_q(f) : H_q(M) \rightarrow H_q(B)$ (see \cite[Sections 2 and 5]{csn-qfc26}). An orientation-preserving diffeomorphism $G$ of $M$ over $B$ (ie.\ one that satisfies $f \circ G \simeq f$) 
induces an automorphism of $E_q(M,f)$. Assuming that $H_q(B)$ is free, we will describe precisely which automorphisms are induced by such diffeomorphisms, that is, we determine the image of the map from diffeomorphisms over $B$ to $\Aut(E_q(M,f))$. 

More generally, if $f_i : M_i \rightarrow B$ is a normal $(q{-}1)$-smoothing for $i=0,1$, then a diffeomorphism $M_0 \rightarrow M_1$ over $B$ induces an isomorphism $E_q(M_0,f_0) \rightarrow E_q(M_1,f_1)$, and we will consider which isomorphisms can be obtained this way. Of course, it is necessary for the existence of a diffeomorphism over $B$ that $f_0$ and $f_1$ are normally bordant, which we will assume from now on. Furthermore, in order to include the $q=2$ case, we will formulate our statements using h-cobordisms instead of diffeomorphisms. If $F : W \rightarrow B$ is a normal bordism between $f_0$ and $f_1$ such that $W$ is an h-cobordism, then the isomorphism induced by $W$, denoted
\[
I_W : E_q(M_0,f_0) \rightarrow E_q(M_1,f_1),
\] 
is given by the composition $H_q(M_0) \rightarrow H_q(W) \rightarrow H_q(M_1)$ of the isomorphisms $H_q(M_i) \cong H_q(W)$ induced by the inclusions $M_i \rightarrow W$. If $q>2$, then by the h-cobordism theorem there is a diffeomorphism $M_0 \rightarrow M_1$ over $B$, which induces the isomorphism $I_W$ between the Q-forms. 

In \cite[Theorem 1.2]{csn-qfc26} we proved that if $H_q(B)$ is free and there is an isomorphism $I : E_q(M_0, f_0) \rightarrow E_q(M_1, f_1)$, then there is a normal bordism $F' : W' \rightarrow B$ between $f_0$ and $f_1$ such that $W'$ is an h-cobordism. When $H_{q-1}(B)$ is also free, our first main result strengthens this statement by adding that we can require $W'$ to induce the given isomorphism $I$.

\begin{thm} \label{thm:main-free}
Let $B$ be a simply-connected space, $\xi$ a stable bundle over it and $q \geq 2$ an even integer. Let $M_0$ and $M_1$ be two closed oriented $2q$-manifolds. Let $f_i : M_i \rightarrow B$ be a normal $(q{-}1)$-smoothing over $(B,\xi)$ ($i=0,1$), and $F_0 : W_0 \rightarrow B$ a normal bordism between $f_0$ and $f_1$. Suppose that $H_q(B)$ and $H_{q-1}(B)$ are free. 

Then for every isomorphism $I : E_q(M_0, f_0) \rightarrow E_q(M_1, f_1)$ there is a normal bordism $F' : W' \rightarrow B$, normally bordant (rel boundary) to $F_0$, such that $W'$ is an h-cobordism and $I_{W'} = I$. In particular, if $q > 2$, then $I$ is realised by a diffeomorphism $M_0 \rightarrow M_1$ over $B$.
\end{thm}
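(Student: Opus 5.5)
The plan is to reduce the realisation problem to a surgery problem on a bordism whose ends are glued according to $I$, and to study it through a ``two-sided'' extended surgery obstruction, in the style of \cite{csn-qfc26}. First, by surgery below the middle dimension we may assume that $F_0 : W_0 \rightarrow B$ is $q$-connected; this does not change its normal bordism class rel boundary. We then look for $W'$ by modifying $W_0$ only in the interior, using surgeries on $q$-spheres together with the choice of normal bordism class rel boundary, until both inclusions $M_i \rightarrow W'$ are homology isomorphisms. Since everything is simply connected, this makes $W'$ an h-cobordism. It will also arrange that the composite $H_q(M_0) \cong H_q(W') \cong H_q(M_1)$ equals $I$.

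To encode $I$, I would form the extended surgery datum of $W_0$ relative to its two boundary pieces. This consists of the kernel $K = \Ker(H_q(F_0))$ with its relative intersection pairing, together with the two ``boundary'' maps from $H_q(M_0)$ and $H_q(M_1)$. The key point is that $I$ determines a Lagrangian-type submodule. Consider the graph $\Gamma_I \subset H_q(M_0) \oplus H_q(M_1)$ with the form $\lambda_{M_0} \oplus (-\lambda_{M_1})$. Because $I$ is an isometry, $\Gamma_I$ is a Lagrangian. Because $I$ commutes with $H_q(f_i)$, $\Gamma_I$ lies in the kernel of $H_q(f_0) - H_q(f_1)$.

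I would then define an obstruction $\theta(W_0, I)$ in an $\ell$-monoid-type group, in the spirit of the extended surgery obstruction of \cite{csn-qfc26}. It should vanish exactly when surgery rel boundary yields an h-cobordism whose induced map is $I$. By analogy with Kreck's theorem, $\theta$ should be elementary once one can find a half-rank summand of $H_q(W_0, \partial W_0)$-type data that is compatible with $\Gamma_I$ and lies in $\Ker H_q(F_0)$.

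The main step, and the expected obstacle, is showing that $\theta$ can be killed after changing $F_0$ within its normal bordism class rel boundary. This is where freeness of $H_q(B)$ and $H_{q-1}(B)$ enters. Compare the image of $H_{q+1}(B, M_0 \sqcup M_1)$-type classes with the gluing. The difference between the given $I$ and the map realised by some h-cobordism $W$, which exists by \cite[Theorem 1.2]{csn-qfc26}, is an automorphism $\phi = I_W^{-1} \circ I$ of $E_q(M_0,f_0)$. We may therefore reduce to $M_0 = M_1$ and realising $\phi$ by an h-cobordism $M_0 \times [0,1]$ modified by a ``twist''. Here $\phi - \id$ maps $H_q(M_0)$ into $\Ker H_q(f_0)$ and is controlled by a bilinear form. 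I would try to write $\phi$ as a product of Eichler/transvection-type automorphisms $x \mapsto x + \lambda(x,u)v - \lambda(x,v)u - \dots$ with $u \in \Ker H_q(f_0)$. Each such factor is realised by stacking a relative bordism built by attaching $q$- and $(q+1)$-handles along embedded spheres representing $u, v$; these are available because $f_0$ is $q$-connected and $u$ maps to $0$ in $B$.

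The remaining case is $\phi$ not generated this way. There the obstruction is a class involving $\Ext(H_{q-1}(B),\cdot)$ or torsion in $\operatorname{coker} H_q(f_0)$, and freeness of $H_q(B)$ and $H_{q-1}(B)$ makes it vanish. Verifying this generation statement, i.e.\ that the automorphisms of the Q-form acting trivially on the image data are generated by realisable transvections once the relevant Ext term vanishes, is the hardest part. Finally, the case $q>2$ follows from the h-cobordism theorem as in the introduction.
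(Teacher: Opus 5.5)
There is a genuine gap. Your reduction to an automorphism $\phi=I_W^{-1}\circ I$ of $E_q(M_0,f_0)$ via \cite[Theorem 1.2]{csn-qfc26} is fine; it is how the paper deduces Theorem \ref{thm:main-torsion} from the automorphism case. But the automorphism case is where all the content lies, and there your argument does not exist yet. The obstruction $\theta(W_0,I)$ is never defined, and its key property (it is elementary exactly when there is an h-cobordism inducing $I$) is only asserted by analogy.

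The step you call the hardest part is also false as formulated. That step is: automorphisms of the Q-form are generated by realisable Eichler-type maps $x\mapsto x+\lambda(x,u)v-\lambda(x,v)u-\dots$ with $u\in\Ker H_q(f_0)$, once the Ext term vanishes. Take $M_0=M_1=S^q\times S^q$ over $B=BO\langle q+1\rangle$. Then $H_q(B)=H_{q-1}(B)=0$, $\mu=0$, and the Q-form is $\UH_2$. Its isotropic vectors are multiples of $e$ or $f$, with orthogonal complements $\Z e$ and $\Z f$, so every Eichler transformation is the identity. Yet the theorem realises all four elements of $\Aut(\UH_2)$. Worse, a $(q{-}1)$-connected almost parallelisable $M$ can have a definite even intersection form (e.g.\ $q=4$ and $E_8^{\oplus 28}$), which has no nonzero isotropic vectors at all. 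In these examples every $\Ext$ and cokernel term you mention vanishes; the cokernel of $H_q(f_0)$ is always zero, since $f_0$ is $q$-connected. So your ``remaining case'' gives no way to handle them.

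The missing idea is that no factorisation of automorphisms is needed. You change the lagrangian directly and pay with an element of an L-group that vanishes.

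\begin{itemize}
\item The paper's two-sided obstruction $\theta^{f_0,f_1}_{W,F}=(\UM;L,V_0,V_1,p_0,p_1)$ remembers the surjections $p_j:V_j\rightarrow H_q(M_j)$. Hence an elementary proper representative $x$ determines an isomorphism $I_x$, and $I_x=I_W$ for an h-cobordism (Proposition \ref{prop:hcob-elem}).
\item If $H_{q-1}(B)$ is free, everything is torsion-free and $V_j=i_j(E_j)\oplus Z$. The graph $\Delta^*_I$ you identified then yields the lagrangian $K=(i_0\oplus i_1)(\Delta^*_I)\oplus Z$.
\item Theorem \ref{thm:jacobi} gives $\theta^{f_0,f_1}_{W,F}\sim[\UM;L,K,K,0,0]\oplus e$, where $e=(\UE_0\oplus(-\UEs_1);\Delta^*_I,E_0,E_1,\id,\id)$.
\item The first summand lies in $L_{2q+1}(B,\xi,f_0,f_1)\cong L_{2q+1}(H_q(B),\hat{v}_q(\xi))$, which is $0$ because $H_q(B)$ is free (Proposition \ref{prop:L-forget}).
\item The remaining summand $e$ is elementary and proper with $I_e=I$. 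Realising it by Theorem \ref{thm:theta-equiv-real} and applying Proposition \ref{prop:hcob-elem} finishes the proof.
\end{itemize}

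So freeness of $H_q(B)$ enters through the vanishing of this L-group. Freeness of $H_{q-1}(B)$ enters through the absence of torsion in $H_q(\partial U)$ and $H_q(M_j)$, not through any generation statement. With torsion present, not every $I$ is realisable (cf.\ Theorem \ref{thm:b-c-intro}).
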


In contrast to Theorem \ref{thm:main-free}, without the assumption that $H_{q-1}(B)$ is free, not every isomorphism between the Q-forms can be realised by an h-cobordism over $B$ (in general). To describe which isomorphisms can be realised, we introduce 
\[
\EE_q(M,f), 
\]
the \emph{system of Q-forms of $f$}. This consists of $E_q(M,f)$ and its mod $m$ analogue, $E_q(M,f)_m$ for every $m \geq 2$. Here $E_q(M,f)_m = (H_q(M; \Z_m), \lambda_{M,m}, H_q(f; \Z_m))$ is the mod $m$ intersection form $\lambda_{M,m} : H_q(M; \Z_m) \times H_q(M; \Z_m) \rightarrow \Z_m$ of $M$ together with the induced map $H_q(f; \Z_m) : H_q(M; \Z_m) \rightarrow H_q(B; \Z_m)$. These satisfy obvious compatibility conditions involving the inclusion $H_q(M) \otimes \Z_m \rightarrow H_q(M; \Z_m)$ from the universal coefficient exact sequence. See Section \ref{ss:sys} for the detailed definitions. An h-cobordism (in fact, any orientation-preserving homotopy equivalence) over $B$ induces an isomorphism $\II : \EE_q(M_0, f_0) \rightarrow \EE_q(M_1, f_1)$ between the systems of Q-forms, see Proposition \ref{prop:ind-sys}. So if an isomorphism $I : E_q(M_0, f_0) \rightarrow E_q(M_1, f_1)$ can be realised by an h-cobordism, then it has to extend to such an $\II$. In general, this is a non-trivial condition on $I$, see Example \ref{ex:i-ext}. We show that this is the only requirement for the realisability of $I$. 

\begin{thm} \label{thm:main-torsion}
Let $B$ be a simply-connected space, $\xi$ a stable bundle over it and $q \geq 2$ an even integer. Let $M_0$ and $M_1$ be two closed oriented $2q$-manifolds. Let $f_i : M_i \rightarrow B$ be a normal $(q{-}1)$-smoothing over $(B,\xi)$ ($i=0,1$), and $F_0 : W_0 \rightarrow B$ a normal bordism between $f_0$ and $f_1$. Suppose that $H_q(B)$ is free. 

Let $I : E_q(M_0, f_0) \rightarrow E_q(M_1, f_1)$ be an isomorphism. Then there is a normal bordism $F' : W' \rightarrow B$ between $f_0$ and $f_1$ such that $W'$ is an h-cobordism and $I_{W'} = I$ if and only if $I$ extends to an isomorphism $\II : \EE_q(M_0, f_0) \rightarrow \EE_q(M_1, f_1)$. If this is the case, then there is an $F'$ that is normally bordant (rel boundary) to $F_0$. 
\end{thm}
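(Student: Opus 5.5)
The plan has two halves. Necessity is quick: if $W'$ is an h-cobordism over $B$ with $I_{W'} = I$, then by Proposition \ref{prop:ind-sys} the homotopy equivalence $M_0 \to W' \to M_1$ over $B$ induces an isomorphism $\II$ of systems of Q-forms. Its integral component is $I_{W'} = I$, so $I$ extends. Sufficiency carries the real work, and I would follow the strategy the abstract indicates: a ``two-sided'' version of the extended surgery obstruction from \cite{csn-qfc26}.

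For sufficiency, I would first do surgery below the middle dimension on $F_0 : W_0 \to B$ (rel boundary), so that $F_0$ becomes $q$-connected; this does not change its normal bordism class rel boundary. I would then glue a copy of the mapping-cylinder data of $I$ to the boundary. More precisely, I would form the algebraic object consisting of $H_{q+1}(F_0)$, or rather the kernel $K = \Ker(H_q(W_0) \to H_q(B))$ together with its form, and record how $H_q(M_0)$ and $H_q(M_1)$ map into it. The two-sided obstruction $\Theta(F_0, I)$ should live in a monoid or group of ``relative Q-form'' classes. It should vanish exactly when $W_0$ can be surgered rel boundary into an h-cobordism inducing $I$. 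Vanishing here means that the boundary inclusions, twisted by $I$, give an isomorphism onto a nonsingular complement of a half-rank sub-lattice on which the form and the map to $H_q(B)$ vanish.

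The key steps, in order, are as follows.
\begin{enumerate}
\item Define the obstruction, and prove the realisation direction: vanishing of the obstruction gives $W'$ with $I_{W'} = I$. This proof proceeds by surgery on embedded $q$-spheres representing a Lagrangian-type basis, and uses that $H_q(B)$ is free to split $H_q(W_0)$ compatibly.
\item Show that the obstruction can be changed by modifying $F_0$ by closed normal $2q+1$-manifolds over $B$, or by the action of self-bordisms. This allows us to reduce to an algebraic equivalence problem.
\item Identify the residual algebraic obstruction. Using $H_q(B)$ free, I expect it to reduce to whether $I$ is compatible with the torsion linking of $H_{q-1}(B)$, that is, with the data $H_q(B;\Z_m)$ that is not seen integrally. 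By the universal coefficient sequence
\[
0 \to H_q(B)\otimes\Z_m \to H_q(B;\Z_m) \to \Tor(H_{q-1}(B),\Z_m) \to 0,
\]
the mod $m$ Q-forms encode exactly how classes in $H_q(M;\Z_m)$ coming from the torsion of $H_{q-1}(M)$, which is $H_{q-1}(B)$-related since $f$ is $q$-connected, map to the torsion part.
\item Show that the existence of $\II$ extending $I$ for every $m$ kills this residual obstruction, by constructing the needed isomorphism of kernels from the mod $m$ components and assembling over all $m$. Over $\Z$ this is possible since the relevant torsion groups are finite, so finitely many $m$ suffice.
\end{enumerate}

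The final claim, that $F'$ is normally bordant rel boundary to $F_0$, then follows automatically, because every step modifies $F_0$ only by surgeries rel boundary.

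I expect the main obstacle to be the last step: showing that extension to $\II$ is not just necessary but sufficient. This requires a precise computation of the two-sided obstruction's value in terms of torsion data of $H_{q-1}(B)$. I would first treat the case where $H_{q-1}(B)$ is a single cyclic group $\Z_{p^k}$. There I would check directly that the obstruction equals the discrepancy between $I\otimes\Z_{p^k}$ and a lift to $H_q(M;\Z_{p^k})$ compatible with $H_q(f;\Z_{p^k})$. After that I would decompose into primary parts.
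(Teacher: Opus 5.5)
Your necessity argument is correct and is the same as the paper's (Proposition \ref{prop:ind-sys}). The sufficiency half, however, is not a proof. Step 4, which carries all the content, is only announced, and the steps leading up to it are partly ill-defined and partly misdirected.
\begin{itemize}
\item $H_q(W_0)$, and its kernel to $H_q(B)$, carries no intersection form, since $W_0$ is $(2q{+}1)$-dimensional. To get a form you have to pass, as the paper does, to $\partial U$ for a regular neighbourhood $U$ of the $q$-skeleton. That gives a T-lagrangian $L$ and two half-rank summands $V_0, V_1$ surjecting onto $H_q(M_0)$ and $H_q(M_1)$.
\item Step 2 modifies $F_0$ by closed normal $(2q{+}1)$-manifolds or arbitrary self-bordisms. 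This changes the normal bordism class rel boundary, which contradicts your closing claim that $F'$ stays in the class of $F_0$.
\item The guess in step 3 is incomplete. If by compatibility with the torsion linking you mean Remark \ref{rem:tor}, it only forces $c = \id_{\tp}$ in the matrix form $\bigl[ \begin{smallmatrix} a & 0 \\ b & c \end{smallmatrix} \bigr]$ of an automorphism. There is an independent obstruction, $(b \otimes \id_{H_q(B)})(\hat{\mu}) = 0$. In Example \ref{ex:i-ext}, every $\bigl[ \begin{smallmatrix} \id & 0 \\ b & \id \end{smallmatrix} \bigr]$ with $b \neq 0$ is the identity on $\tp$, yet does not extend to $\II$.
\item ``Assembling over all $m$'' and ``decomposing into primary parts'' have no mechanism behind them. $B$ is fixed, and nothing makes the obstruction additive over the primary summands of $\Tor H_{q-1}(B)$.
\end{itemize}

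The ideas that actually make sufficiency work are missing. The paper's argument runs as follows.
\begin{enumerate}
\item Reduce to automorphisms. By \cite[Theorem 1.2]{csn-qfc26} there is already an h-cobordism $W_1$ over $B$, normally bordant to $F_0$, inducing some $\JJ$. So it suffices to realise $J^{-1} \circ I$, which extends to $\JJ^{-1} \circ \II$.
\item Convert the existence of an extension into integral conditions: $c = \id_{\tp}$ and $(b \otimes \id_{H_q(B)})(\hat{\mu}) = 0$. This is Proposition \ref{prop:c-id} and Theorem \ref{thm:b-mu}, applied after splitting off the factor $\bigl[ \begin{smallmatrix} a & 0 \\ 0 & \id \end{smallmatrix} \bigr]$, which is already realisable.
\item Realise each factor algebraically (Theorem \ref{thm:real-a-b}). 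Keep $V_0, V_1, p_0, p_1$ from the elementary proper obstruction of the trivial bordism (after free reduction), and change only the lagrangian. For the $b$-factor this needs a lift $\bar{b} : \fp \rightarrow Z = V_0 \cap V_1$ with $(\bar{b} \otimes \id)(\hat{\mu}) = 0$, which is Lemma \ref{lem:b-lift} and uses surjectivity of $\mu$.
\item Show that changing the lagrangian preserves the class in the two-sided $\ell$-set. This follows from the identity of Theorem \ref{thm:jacobi} together with $L_{2q+1}(H_q(B), \hat{v}_q(\xi)) = 0$ for $H_q(B)$ free (Proposition \ref{prop:L-forget}).
\end{enumerate}
Freeness of $H_q(B)$ is used here, in $\mu|_{\tp} = 0$, and in \cite[Theorem 1.2]{csn-qfc26} — not to split $H_q(W_0)$. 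Theorem \ref{thm:theta-equiv-real} and Proposition \ref{prop:hcob-elem} then give an h-cobordism inducing the automorphism. Gluing it to $W_1$ gives $F'$ in the class of $F_0$. Without a replacement for these steps, your step 4 remains an unproved assertion.
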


\begin{rem}
When $H_q(B)$ is free and $H_{q-1}(B)$ contains torsion, the combination of \cite[Theorem 1.2]{csn-qfc26} and Theorem \ref{thm:main-torsion} means that if there exists any isomorphism $I : E_q(M_0, f_0) \rightarrow E_q(M_1, f_1)$, then $M_0$ and $M_1$ are diffeomorphic over $B$, even though $I$ may not be realisable by such a diffeomorphism (if it does not extend to an isomorphism $\II : \EE_q(M_0, f_0) \rightarrow \EE_q(M_1, f_1)$).
\end{rem}

When $I : E_q(M, f) \rightarrow E_q(M, f)$ is an automorphism, we also give a criterion for the existence of an automorphism $\II : \EE_q(M, f) \rightarrow \EE_q(M, f)$ extending $I$ in terms of a matrix representation of $I$. Write $H_q(M) = \fp \oplus \tp$, where $\fp = H_q(M) / \Tor H_q(M)$ and $\tp = \Tor H_q(M)$ denote the free and torsion parts of $H_q(M)$, respectively (and we fix some splitting $\fp \rightarrow H_q(M)$ of the quotient map). The Q-form $E_q(M, f)$ determines a canonical element 
\[
\hat{\mu} \in \fp \otimes H_q(B)
\] 
(see Definition \ref{def:can}). The automorphism $I$ can be written in the form $I = \bigl[ \begin{smallmatrix} a & 0 \\ b & c \end{smallmatrix} \bigr] : \fp \oplus \tp \rightarrow \fp \oplus \tp$ for some automorphism $a$ of the induced form on $\fp$, homomorphism $b : \fp \rightarrow \tp$ and automorphism $c$ of $\tp$. Then we have the following (see Theorem \ref{thm:real-aut-diff}): 

\begin{thm} \label{thm:b-c-intro}
Let $B$ be a simply-connected space, $\xi$ a stable bundle over it and $q \geq 2$ an even integer. Let $M$ be a closed oriented $2q$-manifold with a normal $(q{-}1)$-smoothing $f : M \rightarrow B$ over $(B,\xi)$. Suppose that $H_q(B)$ is free. 

Let $I = \bigl[ \begin{smallmatrix} a & 0 \\ b & c \end{smallmatrix} \bigr] : \fp \oplus \tp \rightarrow \fp \oplus \tp$ be an automorphism of $E_q(M,f)$. Then $I$ extends to an automorphism $\II$ of $\EE_q(M, f)$ if and only if $(b \otimes \id_{H_q(B)})(\hat{\mu}) = 0 \in \tp \otimes H_q(B)$ and $c = \id_{\tp}$.
\end{thm}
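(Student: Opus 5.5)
We plan to prove Theorem \ref{thm:b-c-intro} by making the system of Q-forms explicit in terms of the splitting $H_q(M) = \fp \oplus \tp$ and then reducing the existence of an extension $\II$ to a purely algebraic lifting problem.

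\emph{Setting up.} By the universal coefficient theorem we have, for each $m$, a short exact sequence
\[
0 \to H_q(M)\otimes\Z_m \to H_q(M;\Z_m) \to \Tor(H_{q-1}(M),\Z_m) \to 0 .
\]
Since $M$ is closed, simply-connected and $2q$-dimensional, Poincar\'e duality and universal coefficients identify $\Tor H_{q-1}(M)$ with $\Ext(\tp,\Z)$, i.e.\ with the dual torsion group $\widehat{\tp}=\Hom(\tp,\mathbb{Q}/\Z)$. Hence the $\Tor$-term is $\Hom(\tp,\Z_m)$. The mod $m$ intersection form $\lambda_{M,m}$ pairs the image of $\tp\otimes\Z_m$ with the quotient $\Hom(\tp,\Z_m)$ via evaluation, while it pairs $\fp\otimes\Z_m$ via $\lambda_M$ mod $m$. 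Since $H_q(B)$ is free, $H_q(f;\Z_m)$ is determined on $H_q(M)\otimes\Z_m$ by $H_q(f)\otimes\Z_m$. On the remaining classes it is governed by the composite $H_q(M;\Z_m)\to H_q(B;\Z_m)\cong H_q(B)\otimes\Z_m\oplus\Tor(H_{q-1}(B),\Z_m)$.

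I expect the key point to be that the component landing in $H_q(B)\otimes\Z_m$, restricted to lifts of elements of $\Hom(\tp,\Z_m)$, is encoded by $\hat\mu$. Concretely, $\hat\mu\in\fp\otimes H_q(B)$ is the adjoint of $H_q(f)$ with respect to $\lambda_M$ on $\fp$, and the splitting of $\fp$ inside $H_q(M)$ determines how such lifts pair.

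\emph{Necessity.} Suppose $\II$ extends $I$. Then $\II_m$ preserves $\lambda_{M,m}$, and $\II_m$ restricted to $H_q(M)\otimes\Z_m$ equals $I\otimes\Z_m$. Hence $\II_m$ induces on the quotient $\Hom(\tp,\Z_m)$ the map $(c^*)^{-1}$, by adjointness of the evaluation pairing.

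The map on the $\Tor$-term is also forced by naturality to be compatible with the Bockstein/UCT structure. Here I would use the description of $\II$ in Section~\ref{ss:sys}: compatibility with the maps $H_q(M;\Z_m)\to H_q(M;\Z_{m'})$ and with the Bocksteins into $\Tor H_{q-1}(M)$. I would then argue that the induced automorphism of $\Tor H_{q-1}(M)$ must be the identity, and dualise to get $c=\id_{\tp}$. This step is where I am least certain of the precise mechanism, and I would check it against Example \ref{ex:i-ext}.

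Next take $x\in H_q(M;\Z_m)$ lifting $\phi\in\Hom(\tp,\Z_m)$. Compare $H_q(f;\Z_m)(\II_m x)$ with $H_q(f;\Z_m)(x)$. The difference $\II_m x - x'$, for a suitable lift $x'$, lies in $H_q(M)\otimes\Z_m$ and is computed by pairing with $b$. Applying $H_q(f)$ yields exactly $(\phi\circ b\otimes\id)(\hat\mu)$ mod $m$. Letting $m$ and $\phi$ vary, this forces $(b\otimes\id)(\hat\mu)=0$ in $\tp\otimes H_q(B)$, because $\tp\otimes H_q(B)$ is detected by the maps $\Hom(\tp,\Z_m)$ when $H_q(B)$ is free.

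\emph{Sufficiency.} Given the two conditions, define $\II_m$ as $I\otimes\Z_m$ on the subgroup $H_q(M)\otimes\Z_m$ and as the identity on a chosen set-theoretic complement lifting $\Hom(\tp,\Z_m)$. The lifts should be corrected by an element of $\fp\otimes\Z_m$ determined by $b$, chosen so that $\lambda_{M,m}$ is preserved. This is a linear-algebra lifting problem: solvability uses $c=\id$, and the correction must be made compatibly in $m$. The condition $(b\otimes\id)(\hat\mu)=0$ then shows that $H_q(f;\Z_m)$ is preserved. Finally, we would verify that the $\II_m$ are compatible with change of coefficients, so that they assemble into an automorphism $\II$ of $\EE_q(M,f)$.

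\emph{Main obstacle.} The main obstacle is the explicit bookkeeping of $\lambda_{M,m}$ and $H_q(f;\Z_m)$ on the non-split part of $H_q(M;\Z_m)$, and making the choices of lifts coherent over all $m$. I would first treat the case where $\tp$ is cyclic of prime-power order and then assemble the general case.
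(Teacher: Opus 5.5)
\paragraph{The argument for $c=\id_{\tp}$ has a genuine gap.} The mechanism you propose is not available. A morphism $\II=(I,\{I_m\})$ of systems of Q-forms only requires three things of each $I_m$: it preserves $\lambda_m$, it preserves $\mu_m$, and $I_m\circ\iota_m=\iota_m\circ(I\otimes\id_{\Z_m})$. The $I_m$ for different $m$ are unrelated, so there is no compatibility with reduction maps or Bocksteins to use.

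What pins down the action on the $\Tor$-term is that $I_m$ is a map over $B$, i.e.\ it preserves $\mu_m=H_q(f;\Z_m)$. In the ladder of universal coefficient sequences for $f$, the right-hand map $\Tor(H_{q-1}(M),\Z_m)\to\Tor(H_{q-1}(B),\Z_m)$ is an isomorphism because $f$ is $q$-connected. Hence $\pi^M_m\circ I_m=\pi^M_m$, so $I_m$ induces the identity on the quotient. Your observation that this induced map is the dual-inverse of $c$ is correct. Combining the two gives $\xi(c(z)-z)=0$ for all $m$, $\xi\in\Hom(\tp,\Z_m)$ and $z\in\tp$, so $c=\id_{\tp}$; this is Proposition~\ref{prop:c-id}.

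The same fact, $I_m(x)-x\in\Image\iota_m$, is the starting point of your $b$-computation. For general $a$ you also need to choose the lift $x$ with $\lambda_m(x,\iota_m(\fp\otimes\Z_m))=0$, and to use $(a\otimes\id)(\hat{\mu})=\hat{\mu}$. The paper avoids this: it factors off the $a$-part, gets its extension geometrically (Lemma~\ref{lem:real-a-b-hcob}~a) and Proposition~\ref{prop:ind-sys}), and then applies Theorem~\ref{thm:b-mu} with $a=\id$.

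\paragraph{Sufficiency: a different route, but the key step is misidentified.} The paper never builds the $I_m$ by hand. It realises $I$ by an h-cobordism over $B$ (Theorem~\ref{thm:real-a-b}, Theorem~\ref{thm:jacobi}, vanishing of the L-group, Theorem~\ref{thm:theta-equiv-real}) and then gets $\II$ from Proposition~\ref{prop:ind-sys}. A direct algebraic construction can be made to work and would avoid surgery. Coherence in $m$ is a non-issue, since the $I_m$ are independent. The real difficulty is elsewhere.

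Take a complement $S$ to $\Image\iota_m$ orthogonal to $\iota_m(\fp\otimes\Z_m)$. Preserving $\lambda_m$ between $S$ and $\fp\otimes\Z_m$ forces a unique correction $\beta_F\colon S\to\fp\otimes\Z_m$, determined by $b\circ a^{-1}$. Your $\hat{\mu}$-condition is what then makes $\mu_m$ preserved. However, $\lambda_m$ on $S\times S$ changes by $\lambda(\beta_F s,\beta_F s')$, which is generally nonzero. So you need a further correction $\beta_T\colon S\to\tp\otimes\Z_m$ satisfying $\lambda_m(s,\beta_T s')+\lambda_m(s',\beta_T s)=-\lambda(\beta_F s,\beta_F s')$.

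For even $m$, solving this means halving diagonal values within the $n$-torsion of $\Z_m$. That is possible only because $E_q(M,f)$ is geometric. For a generator $e$ of $S$ of even order $n$, write $\beta_F(e)=\tfrac{m}{n}\bar{w}$ with $w\in\fp$. The $\hat{\mu}$-condition gives $\mu(w)\in nH_q(B)$, so $\lambda(w,w)$ is even by the Wu-class condition. For non-geometric data this step genuinely fails. Your argument attributes solvability to $c=\id_{\tp}$ and never uses geometricity, so as written it cannot be complete.
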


We end with an application. We consider the case of complete intersections, where the result of Theorem \ref{thm:main-free} about diffeomorphisms over $B$ can be used to obtain information about arbitrary diffeomorphisms.
Recall that a complete intersection $X_q(\dd)$, for a finite multiset $\dd = \left\{ d_1, d_2, \ldots , d_k \right\}$ of positive integers, is defined as the transverse intersection of smooth algebraic hypersurfaces in ${\C}P^{q+k}$ of degrees $d_1, d_2, \ldots , d_k$. It is a smooth manifold of real dimension $2q$, and by a result of Thom its diffeomorphism type depends only on $\dd$.

\begin{thm} \label{thm:main-ci}
If $q > 2$ is even, then every automorphism of the cohomology ring $H^*(X_q(\dd))$ can be realised by a diffeomorphism of $X_q(\dd)$. 
\end{thm}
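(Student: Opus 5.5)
We will deduce Theorem \ref{thm:main-ci} from Theorem \ref{thm:main-free}, applied with $M_0 = M_1 = X = X_q(\dd)$ and $f_0 = f_1 = f$ a suitable normal $(q{-}1)$-smoothing of $X$.

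\textbf{Choice of $(B,\xi)$.} By Lefschetz, $X$ is simply-connected and the inclusion $i : X \rightarrow \C P^{\infty}$ (composing $X \subset \C P^{q+k}$ with $\C P^{q+k} \subset \C P^\infty$) is $q$-connected. The normal bundle of $X$ is stably determined by $\dd$ and pulled back from $\C P^\infty$: $\nu_X \cong i^*\bigl(-(q+k+1)H + \bigoplus_j d_j H\bigr)$, with $H$ the hyperplane bundle. We therefore take $B = \C P^\infty \times BO\langle q{+}1\rangle$, with $\xi$ the sum of that bundle and the universal bundle; then $f = i \times \ell$, with $\ell$ a $q$-connected lift of the spin/string-type part, is a normal $(q{-}1)$-smoothing. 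The key point is that we need $H_q(B)$ and $H_{q-1}(B)$ to be free. We may simply use $B = \C P^\infty$, whose homology is free, provided $i$ itself is a normal $(q{-}1)$-smoothing. This works because $i$ is $q$-connected and $\nu_X$ is pulled back from a bundle $\xi$ over $\C P^\infty$. Then $H_q(B) = \Z$ and $H_{q-1}(B) = 0$ are free, and the normal bordism hypothesis holds trivially, taking $W_0 = X \times [0,1]$.

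\textbf{From ring automorphisms to Q-form automorphisms.} Let $\phi$ be an automorphism of $H^*(X)$. Since $q$ is even and $X$ has cohomology $\Z[x]/(x^{q+1})$-like in degrees $\neq q$, with $x = i^*(h)$, $\phi$ is determined by $\phi(x) = \pm x$ and by its restriction to $H^q(X)$, which must preserve the cup product form. (When $\dd = \{1\}$, i.e.\ $X = \C P^q$, the middle degree is generated by $x^{q/2}$, and the argument below still applies.) If $\phi(x) = -x$, we first compose with complex conjugation $\sigma$ on $X$, taking $X$ defined over $\R$. Because $q$ is even, $\sigma$ is orientation preserving, and it sends $x \mapsto -x$. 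This reduces us to the case $\phi(x) = x$. Note also that $\phi$ must preserve the fundamental class, because $x^q[X] = \prod d_j > 0$ and $\phi(x^q) = x^q$.

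Dualising, $\phi$ gives an automorphism $I$ of $(H_q(X), \lambda_X)$. Under the identification $H^q(B) \rightarrow H^q(X)$, $x^{q/2}$ is fixed by $\phi$, so $I$ commutes with $H_q(f)$, which is evaluation against $x^{q/2}$. Hence $I$ is an automorphism of $E_q(X,f)$. Theorem \ref{thm:main-free} then yields a diffeomorphism $G$ over $B$ inducing $I$ on $H_q$. Since $G$ is over $B$, it fixes $x$, so $G^*$ agrees with $\phi$ in all degrees.

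\textbf{Main obstacle.} The step I expect to be the hardest is verifying that $B = \C P^\infty$ really gives a normal $(q{-}1)$-smoothing. This needs $\nu_X$ to be pulled back from $\C P^\infty$ as a stable bundle with a specified bundle map, which holds by the adjunction formula. It also needs $i$ to be $q$-connected rather than merely $(q{-}1)$-connected. By Lefschetz, $H_j(\C P^{q+k}, X) = 0$ for $j \le q$, and this passes to $\C P^\infty$, so $i$ is $q$-connected. If the torsion-free conditions failed for a refined $B$, we would fall back on Theorem \ref{thm:main-torsion} together with Theorem \ref{thm:b-c-intro}. Here that fallback is unnecessary, since $H_q(X)$ is torsion-free.
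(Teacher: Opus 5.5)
Your proof is correct and is essentially the paper's argument: the paper also uses the inclusion into projective space (with $B = \C P^{q+k}$ rather than $\C P^\infty$, which makes no difference), reduces to automorphisms fixing $x$ via complex conjugation, checks that the dual of $\phi|_{H^q}$ preserves $\lambda$ and $H_q(i)$ by pairing with $x^{q/2}$, and then applies Theorem \ref{thm:main-free}. You deviate in two small places. First, you get the diffeomorphism with $x \mapsto -x$ by taking a representative defined over $\R$ (such a representative exists because real coefficient tuples are Zariski dense and so cannot all lie on the discriminant), whereas Lemma \ref{lem:ci-conj} conjugates an arbitrary representative and uses Thom's isotopy back to $X$. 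Second, your normal bundle formula should involve $H^{\otimes d_j}$ rather than $d_j$ copies of $H$, though only the fact that $\nu_X$ is pulled back from projective space is actually used.
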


\subsection{Background}

Determining the image of the map $A_M : \pi_0 \Diff^+(M) \rightarrow \Aut(H_q(M),\lambda_M)$ given by the induced automorphism is usually the first step in understanding the mapping class group of a $2q$-manifold $M$. In lower dimensions, $A_M$ is surjective for many manifolds. For example, if $M$ is a genus-$g$ surface, then it is well-known that its mapping class group surjects onto $\Aut(H_1(M),\lambda_M) \cong Sp(2g,\Z)$, and the kernel of $A_M$ is called the Torelli group. Wall \cite{wall64b} showed that $A_M$ is surjective for certain simply-connected $4$-manifolds stabilised by $S^2 \times S^2$. By Quinn \cite{quinn86} the analogous map $\pi_0 \Homeo^+(M) \rightarrow \Aut(H_q(M),\lambda_M)$ is an isomorphism for all simply-connected topological $4$-manifolds. For simply-connected $6$-manifolds with torsion-free homology $A_M$ is surjective by the arguments of Kreck-Su \cite{kreck-su25}, using Wall's splitting theorem \cite{wall66-v}.

If $q \geq 3$ and $M$ is an almost parallelisable $(q{-}1)$-connected $2q$-manifold, then Kreck \cite{kreck79}, based on results of Wall \cite{wall63-ii} and Kosi\'{n}ski \cite{kosinski67}, showed that $A_M$ is surjective onto those automorphisms that preserve a function $\alpha : H_q(M) \rightarrow \pi_{q-1}(SO_q)$. When $q$ is even, this means that $A_M$ is surjective, because $\alpha$ is determined by self-intersections. This case is recovered by taking $B = BO \left< q+1 \right>$ in Theorem \ref{thm:main-free}. 

However, the map $A_M$ cannot be surjective in general. A diffeomorphism of $M$ induces an automorphism of the entire cohomology ring and preserves characteristic classes, and this puts extra constraints on the automorphisms of $\lambda_M$ that can be realised (when $M$ is not highly-connected, or not $q$-parallelisable). 

The map $A_M$ is also closely related to (a certain aspect of) the problem of classifying $2q$-manifolds. In known results it is often the intersection form (possibly together with other invariants) that determines the diffeomorphism classification, see eg.\ Wall \cite{wall62} for $(q{-}1)$-connected almost closed $2q$-manifolds, $q \geq 3$, or in the topological case Freedman's classification of simply-connected $4$-manifolds up to homeomorphism \cite{freedman82}. In such cases a more refined question is whether every isomorphism $(H_q(M_0),\lambda_{M_0}) \rightarrow (H_q(M_1),\lambda_{M_1})$ of intersection forms (preserving the other invariants) is realised by a diffeomorphism $M_0 \rightarrow M_1$. Since two such isomorphisms differ by an automorphism of either $(H_q(M_i),\lambda_{M_i})$, the answer is affirmative if and only if $A_{M_i}$ is surjective (onto the subgroup of automorphisms preserving the other invariants). 

Our results address the analogous problems for normal $(q{-}1)$-smoothings, where diffeomorphisms and automorphisms/isomorphisms of the intersection form are replaced by diffeomorphisms over $B$ and automorphisms/isomorphisms of the Q-form, respectively. This is motivated by \cite[Theorem 1.2]{csn-qfc26}, where we proved that if normally bordant normal $(q{-}1)$-smoothings have isomorphic Q-forms (and $H_q(B)$ is free), then the manifolds are diffeomorphic over $B$. However, the question of realising a given isomorphism by a diffeomorphism remained open. 

Theorem \ref{thm:main-free} can be thought of as the analogue of the results about the surjectivity of $A_M$, but it applies to a much larger class of $2q$-manifolds. This is made possible by the fact that for normal $(q{-}1)$-smoothings, the previously mentioned constraints on the automorphisms of $\lambda_M$ that are realisable by diffeomorphisms are mostly absorbed into the condition that the induced map $H_q(f)$ has to be preserved. On the other hand, allowing only diffeomorphisms over $B$ puts new restrictions on realisable automorphisms, which appear in Theorems \ref{thm:main-torsion} and \ref{thm:b-c-intro}. In particular, the necessity of the condition $c = \id_{\tp}$ in the latter can be seen immediately, since such a diffeomorphism induces the identity on $H_{q-1}(M)$ and preserves the nonsingular linking pairing between $\Tor H_{q-1}(M)$ and $\Tor H_q(M)$, cf.\ Remark \ref{rem:tor}.

\subsection{Overview of proofs}

The basic idea is that we can modify the extended surgery obstruction used in \cite{csn-qfc26} in a way that allows us to read off the isomorphism induced by an h-cobordism from the obstruction associated to it. 

First, we introduce the two-sided $\ell$-set $\ell_{2q+1}(Q,v,\UP_0,\UP_1)$ in Section \ref{s:l-set}. This is based on the extended $\ell$-monoid $\ell_{2q+1}(Q,v)$ (see \cite[Section 4]{csn-qfc26}), but it also depends on a pair of extended quadratic forms $\UP_i$, which correspond to the Q-forms $E_q(M_i,f_i)$ in the geometric setting. Its elements are equivalence classes of two-sided quasi-formations, which include as part of their data a pair of morphisms $p_i$ to the extended quadratic forms $\UP_i$. If a two-sided quasi-formation is elementary (and proper), it determines an isomorphism $\UP_0 \rightarrow \UP_1$, see Definition \ref{def:ix}.

In Section \ref{s:obstr-def} we define the two-sided surgery obstruction $\theta^{f_0,f_1}_{W,F}$ of a $q$-connected normal bordism $F : W \rightarrow B$ between $f_0$ and $f_1$. It is the extended surgery obstruction $\theta_{W,F}$ from \cite[Section 6]{csn-qfc26} together with a pair of surjective morphisms $p_i : V_i \rightarrow H_q(M_i)$ that make it a (proper) two-sided quasi-formation. It inherits the main properties of the extended obstruction, in particular, its equivalence class is a normal bordism invariant, and (up to stable isomorphism) any two-sided quasi-formation equivalent to $\theta^{f_0,f_1}_{W,F}$ can be realised as $\theta^{f_0,f_1}_{W',F'}$ for some $F' : W' \rightarrow B$, which is normally bordant to $F$. We also show that $\theta^{f_0,f_1}_{W,F}$ is elementary if and only if $W$ is an h-cobordism, and in this case it induces the same isomorphism $E_q(M_0,f_0) \rightarrow E_q(M_1,f_1)$ as $W$ (see Proposition \ref{prop:hcob-elem}). 

In Section \ref{s:real-free} we prove Theorem \ref{thm:main-free}. First we replace $F_0$ with a $q$-connected normal bordism $F : W \rightarrow B$. Using the assumption that $H_{q-1}(B)$ is free, we construct an elementary two-sided quasi-formation that is equivalent to $\theta^{f_0,f_1}_{W,F}$ and induces the given isomorphism $I : E_q(M_0,f_0) \rightarrow E_q(M_1,f_1)$. The equivalence relies on the identity of Theorem \ref{thm:jacobi} and the fact that the relevant extended L-group is trivial when $H_q(B)$ is free. 

When we remove the assumption that $H_{q-1}(B)$ is free, it is no longer possible to find an elementary two-sided quasi-formation that is equivalent to the given obstruction $\theta^{f_0,f_1}_{W,F}$ and induces an arbitrary isomorphism $I$. Thus our goal in general is to determine the set of isomorphisms $I$ that can be realised by a diffeomorphism over $B$. Using the notation set up in Section \ref{ss:sys}, it is necessary that $I$ extends to an isomorphism $\II : \EE_q(M_0, f_0) \rightarrow \EE_q(M_1, f_1)$ between the systems of Q-forms. So we need to prove the other direction of Theorem \ref{thm:main-torsion}, that if $I$ has an extension, then it is realisable. 

We start with the special case when $I$ is an automorphism of $E_q(M,f)$ for some $f : M \rightarrow B$. The first main step is translating the existence of $\II$ into a condition that is expressed directly in terms of $I$. This is achieved in Section \ref{ss:int}, where we show that the condition of Theorem \ref{thm:b-c-intro} is necessary.

The second step is constructing a suitable two-sided quasi-formation that induces $I$, for the allowed automorphisms $I$. Note that for every $I$ there exists a two-sided quasi-formation that induces it (we can take a two-sided quasi-formation that induces $\id_{E_q(M,f)}$, and post-compose the morphism $p_1$ with $I$), the difficulty is in finding one that is the obstruction of a normal bordism. We solve this in Theorem \ref{thm:real-a-b}, where we start with the obstruction of the trivial normal bordism, which is a two-sided quasi-formation that induces $\id_{E_q(M,f)}$, and construct one that induces $I$ by only changing the T-lagrangian. This relies on the conditions that $(b \otimes \id_{H_q(B)})(\hat{\mu}) = 0$ and $c = \id_{\tp}$ for $b$ and $c$ in the matrix form of $I$. Then another argument using Theorem \ref{thm:jacobi} and the vanishing of the extended L-group shows that there is an h-cobordism inducing $I$.

In Section \ref{s:real-tor} we prove Theorem \ref{thm:main-torsion} by reducing it to the case of automorphisms. By \cite[Theorem 1.2]{csn-qfc26} there is an h-cobordism over $B$ between $M_0$ and $M_1$, which induces an isomorphism $E_q(M_0, f_0) \rightarrow E_q(M_1, f_1)$ that extends to an isomorphism between the systems of Q-forms. It differs from $I$ by an automorphism of $E_q(M_0, f_0)$, which also extends, and by the earlier results can be realised by an h-cobordism.

Finally, in Section \ref{s:ci} we prove Theorem \ref{thm:main-ci}. The proof is based on the observation that if an automorphism of $H^*(X_q(\dd))$ fixes $H^2(X_q(\dd)) \cong \Z$, then its restriction to $H^q(X_q(\dd))$ is an automorphism of the Q-form $E_q(X_q(\dd),i)$ of a certain normal $(q{-}1)$-smoothing $i : X_q(\dd) \rightarrow {\C}P^{q+k}$, and hence can be realised by a diffeomorphism by Theorem \ref{thm:main-free}.

\subsection*{Acknowledgements}

I am grateful to Diarmuid Crowley for discussions on extended surgery theory, and to Mark Powell for helpful suggestions on the manuscript.

\section{Two-sided $\ell$-sets} \label{s:l-set}

This section sets up the algebraic background required for the two-sided extended surgery obstruction. We introduce two-sided quasi-formations and two-sided $\ell$-sets, and establish their main properties. In particular, a two-sided quasi-formation is a quasi-formation (see \cite[Section 4.1]{csn-qfc26}) together with some extra data depending on a pair of extended quadratic forms $\UP_0$, $\UP_1$. Its key feature is that if a two-sided quasi-formation is elementary and proper, then it determines an isomorphism $\UP_0 \rightarrow \UP_1$, see Section \ref{ss:proper}. The identity of \cite[Theorem 4.26]{csn-qfc26} has a two-sided analogue, Theorem \ref{thm:jacobi}. As a result, the L-groups can also be defined in this setting, but they are isomorphic to the extended L-groups of \cite[Section 4.3]{csn-qfc26} via the forgetful map.

\subsection{Extended quadratic forms}

This paper builds on the results of \cite{csn-qfc26}, with which the reader is assumed to be familiar. Nevertheless, for convenience, we recall from \cite[Sections 2 and 5]{csn-qfc26} the definition of extended quadratic forms and some related notions, which will be used frequently.

\begin{defin}
An \emph{extended quadratic form over an abelian group $Q$} is a triple $\UM = (M, \lambda, \mu)$, where $M$ is an abelian group, $\lambda : M \times M \rightarrow \Z$ is a symmetric bilinear function, and $\mu : M \rightarrow Q$ is a homomorphism. It is \emph{geometric} with respect to a homomorphism $v : Q \rightarrow \Z_2$, if for every $x \in M$ we have $\varrho_2 \circ \lambda(x,x) = v \circ \mu(x)$, where $\varrho_2 : \Z \rightarrow \Z_2$ is reduction mod $2$. To simplify the terminology, we will also say in this case that $\UM$ is an \emph{extended quadratic form over $(Q,v)$}.
\end{defin}

\begin{rem}
By \cite[Theorem 1.3]{dc-csn-eqf} the pair $(Q,v)$ corresponds to a (symmetric) quadratic form parameter $P$, and by \cite[Lemma 2.29 and Remark 2.30]{dc-csn-eqf} an extended quadratic form over $(Q,v)$ is equivalent to an extended quadratic form over $P$.
\end{rem}

An example of an extended quadratic form is the Q-form of a map.  

\begin{defin}
Let $f : M \rightarrow B$ be a map from a closed oriented $2q$-manifold $M$ (with $q \geq 2$ even) to a topological space $B$. The \emph{Q-form} of $f$ is $E_q(M,f) = (H_q(M), \lambda, \mu)$, where $\lambda : H_q(M) \times H_q(M) \rightarrow \Z$ is the intersection form of $M$ and $\mu = H_q(f) : H_q(M) \rightarrow H_q(B)$. 
\end{defin}

The Q-form $E_q(M,f)$ is an extended quadratic form over $H_q(B)$. If $f$ is a normal map, ie.\ it is covered by some bundle map $\nu_M \rightarrow \xi$ from the stable normal bundle of $M$ to some stable bundle $\xi$ over $B$, then $E_q(M,f)$ is geometric with respect to the homomorphism $\hat{v}_q(\xi) : H_q(B) \rightarrow \Z_2$ determined by the Wu class $v_q(\xi) \in H^q(B;\Z_2)$ of $\xi$.

\begin{defin}
An extended quadratic form $\UM$ over $Q$ is \emph{metabolic}, if it is nonsingular ($\ad \lambda : M \rightarrow \Hom(M, \Z) = M^*$ induces an isomorphism $M / \Tor M \cong M^*$) and contains a T-lagrangian $L$ (a half-rank direct summand in $M$ with $\Tor M \leq L$, $\lambda \big| _{L \times L} = 0$ and $\mu \big| _L = 0$).
\end{defin}

A special case of metabolic extended quadratic forms is a \emph{hyperbolic} form, which is defined by any of the equivalent conditions in the following lemma (see \cite[Lemma 2.13]{csn-qfc26}, and note that a geometric form with $\mu = 0$ is even).

\begin{lem} \label{lem:hyp}
Let $\UM$ be an extended quadratic form over $(Q,v)$. The following are equivalent: 
\begin{compactenum}
\item \label{el:he1} $\UM$ is free, nonsingular and $M = L \oplus L'$ for some lagrangians $L, L' \leq M$.
\item \label{el:he2} $\UM$ is free, metabolic and $\mu = 0$.
\item \label{el:he3} $\UM$ is isomorphic to $\UH_{2k} = (\Z^{2k}, \bigl[ \begin{smallmatrix} 0 & I_k \\ I_k & 0 \end{smallmatrix} \bigr], 0)$, where $2k = \rk M$. 
\end{compactenum}
\end{lem}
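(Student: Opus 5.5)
I will prove the cycle of implications (\ref{el:he1}) $\Rightarrow$ (\ref{el:he2}) $\Rightarrow$ (\ref{el:he3}) $\Rightarrow$ (\ref{el:he1}). Throughout I use that $\UM$ is geometric over $(Q,v)$, so $\varrho_2 \lambda(x,x) = v(\mu(x))$ for all $x \in M$. In particular, if $\mu = 0$ then $\lambda(x,x)$ is even for every $x$, i.e.\ the form is even.

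The implication (\ref{el:he3}) $\Rightarrow$ (\ref{el:he1}) is immediate. $\UH_{2k}$ is free and nonsingular, since its matrix is unimodular. The two summands $\Z^k \oplus 0$ and $0 \oplus \Z^k$ are lagrangians, because $\lambda$ and $\mu = 0$ vanish on each of them and each is a half-rank summand. For (\ref{el:he1}) $\Rightarrow$ (\ref{el:he2}): $M$ is free, so $\Tor M = 0 \le L$, and $L$ is a half-rank direct summand on which $\lambda$ and $\mu$ vanish. Hence $L$ is a T-lagrangian and $\UM$ is metabolic. To get $\mu = 0$, write $x = y + y'$ with $y \in L$ and $y' \in L'$. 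Then $\mu(x) = \mu(y) + \mu(y') = 0$, because the definition of lagrangian includes $\mu|_L = 0$ and $\mu|_{L'} = 0$.

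The substantive step is (\ref{el:he2}) $\Rightarrow$ (\ref{el:he3}), which is the classical statement that an even unimodular form with a lagrangian is hyperbolic.

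First, let $L$ be a T-lagrangian of rank $k$, with $\rk M = 2k$. Since $M$ is free and $L$ is a direct summand, write $M = L \oplus K$. By nonsingularity, $\ad\lambda : M \to M^*$ is an isomorphism. The composition $M \to M^* \to L^*$ is surjective, because $L$ is a summand and so restriction $M^* \to L^*$ is onto. This composition vanishes on $L$, so it induces a surjection $M/L \cong K \to L^*$. Both groups are free of rank $k$, so this map is an isomorphism.

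Next, choose a basis $e_1,\dots,e_k$ of $L$, and pick $f_1,\dots,f_k \in K$ with $\lambda(e_i,f_j) = \delta_{ij}$. Then $e_1,\dots,e_k,f_1,\dots,f_k$ is a basis of $M$, and the Gram matrix in this basis is $\bigl[\begin{smallmatrix} 0 & I \\ I & A \end{smallmatrix}\bigr]$ with $A$ symmetric. Evenness (from $\mu=0$ and geometricity) makes every diagonal entry $A_{jj}$ even. Now replace each $f_j$ by
\[
f_j' = f_j - \tfrac{1}{2}A_{jj}e_j - \sum_{i<j} A_{ij} e_i .
\]
This substitution kills $A$ while preserving $\lambda(e_i,f'_j) = \delta_{ij}$. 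The basis $e_1,\dots,e_k,f'_1,\dots,f'_k$ therefore identifies $(M,\lambda)$ with the matrix of $\UH_{2k}$. Since $\mu = 0$ on both sides, this is an isomorphism $\UM \cong \UH_{2k}$ of extended quadratic forms.

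The only delicate point I expect is the evenness needed for the halving $\tfrac12 A_{jj}$. This is exactly where geometricity and $\mu = 0$ enter; the rest is routine linear algebra over $\Z$.
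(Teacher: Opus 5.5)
Your proof is correct, and it is essentially the argument the paper relies on. The paper gives no proof of its own: it cites \cite[Lemma 2.13]{csn-qfc26}, the classical fact that an even unimodular form with a lagrangian is hyperbolic, and adds only the remark that a geometric form with $\mu = 0$ is even, which is exactly what you use for the halving $\tfrac12 A_{jj}$; your upper-triangular change of basis spells out that classical step.
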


\subsection{Two-sided quasi-formations}

For the rest of of Section \ref{s:l-set}, we will fix the data $(Q,v,\UP_0,\UP_1)$ consisting of an abelian group $Q$, a homomorphism $v : Q \rightarrow \Z_2$, and extended quadratic forms $\UP_i = (P_i, \lambda_{P_i}, \mu_{P_i})$ over $(Q,v)$ for $i=0,1$.

\begin{defin} \label{def:2sqf}
A \emph{two-sided quasi-formation} over $(Q,v,\UP_0,\UP_1)$ is a $6$-tuple
\[
(\UM; L, V_0, V_1, p_0, p_1)
\]
where
\begin{compactitem}
\item $\UM = (M, \lambda, \mu)$ is a metabolic extended quadratic form over $(Q,v)$
\item $L \leq M$ is a T-lagrangian
\item $V_0, V_1 \leq M$ are free half-rank direct summands such that $V_i^{\perp} = V_{1-i} \oplus \Tor M$
\item $p_0 : \UV_0 \rightarrow \UP_0$ and $p_1 : \UV_1 \rightarrow -\UPs_1$ are morphisms of extended quadratic forms, where $\UV_i = (V_i, \lambda \big| _{V_i \times V_i}, \mu \big| _{V_i})$
\end{compactitem}
\end{defin}

\begin{defin}
A \emph{two-sided formation} is a two-sided quasi-formation $(\UM; L, V_0, V_1, p_0, p_1)$ such that
\begin{compactitem}
\item $\UM$ is free
\item $V_0 = V_1$ is a lagrangian
\item $p_0 = 0$ and $p_1 = 0$
\end{compactitem}
Hence it is of the form $(\UM; L, K, K, 0, 0)$ for some lagrangians $L$, $K$.
\end{defin}

\begin{defin}
A two-sided quasi-formation $(\UM; L, V_0, V_1, p_0, p_1)$ is called \emph{elementary}, if $M = L \oplus V_0$ (as an internal direct sum).
\end{defin}

\begin{defin}
The \emph{standard rank-$2k$ elementary hyperbolic two-sided formation} is $\HH_{2k} = (\UH_{2k}; \{ 0 \} \times \Z^k, \Z^k \times \{ 0 \}, \Z^k \times \{ 0 \}, 0, 0)$.
\end{defin}

\begin{defin}
Let $(\UM; L, V_0, V_1, p_0, p_1)$ and $(\UMp; L', V'_0, V'_1, p'_0, p'_1)$ be two-sided quasi-formations. They are \emph{isomorphic} if there is an isomorphism $h : \UM \rightarrow \UMp$ such that $L' = h(L)$, $V_i' = h(V_i)$ and $p_i = p'_i \circ h : V_i \rightarrow P_i$. 
\end{defin}

\begin{defin}
Let $(\UM; L, V_0, V_1, p_0, p_1)$ be a two-sided quasi-formation and $(\UMp; L', K', K', 0, 0)$ be a two-sided formation. Their \emph{direct sum} is $(\UM; L, V_0, V_1, p_0, p_1) \oplus (\UMp; L', K', K', 0, 0) = (\UM \oplus \UMp; L \oplus L', V_0 \oplus K', V_1 \oplus K', p_0 \oplus 0, p_1 \oplus 0)$. 
\end{defin}

\begin{rem}
The direct sum of two two-sided quasi-formations is not defined, because if $p : \UV \rightarrow \UP$ and $p' : \UVp \rightarrow \UP$ are morphisms of extended quadratic forms, then the map $p \oplus p' : \UV \oplus \UVp \rightarrow \UP$ is not a morphism in general.
\end{rem}

\begin{defin}
Let $(\UM; L, V_0, V_1, p_0, p_1)$ and $(\UMp; L', V'_0, V'_1, p'_0, p'_1)$ be two-sided quasi-formations. They are \emph{stably isomorphic} if there are integers $k,l \geq 0$ such that  $(\UM; L, V_0, V_1, p_0, p_1) \oplus \HH_{2k} \cong (\UMp; L', V'_0, V'_1, p'_0, p'_1) \oplus \HH_{2l}$. Stable isomorphism will be denoted by ${} \si {}$. 
\end{defin}

\begin{defin} \label{def:gc}
Given $(Q,v,\UP_0,\UP_1)$, a torsion abelian group $R$ and an even integer $q \geq 0$, let
\[
\gc^T_{2q+1}(Q,v,\UP_0,\UP_1,R)
\]
denote the class of two-sided quasi-formations $(\UM; L, V_0, V_1, p_0, p_1)$ such that $\UM = (M, \lambda, \mu)$ is full (ie.\ $\mu$ is surjective) and $\Tor M \cong R$. Further, let
\[
\begin{aligned}
\gs^T_{2q+1}(Q,v,\UP_0,\UP_1,R) &= \gc^T_{2q+1}(Q,v,\UP_0,\UP_1,R) / {\cong} \\
\s^T_{2q+1}(Q,v,\UP_0,\UP_1,R) &= \gc^T_{2q+1}(Q,v,\UP_0,\UP_1,R) / {\si} \\
\ell^T_{2q+1}(Q,v,\UP_0,\UP_1,R) &= \gc^T_{2q+1}(Q,v,\UP_0,\UP_1,R) / {\sim}
\end{aligned}
\]
denote the set of isomorphism classes, stable isomorphism classes, and equivalence classes of such two-sided quasi-formations, respectively. The equivalence relation $\sim$ is generated by the following elementary equivalences: 
\begin{compactitem}
\item $(\UM; L, V_0, V_1, p_0, p_1) \sim (\UMp; L', V'_0, V'_1, p'_0, p'_1)$ if $(\UM; L, V_0, V_1, p_0, p_1) \si (\UMp; L', V'_0, V'_1, p'_0, p'_1)$
\item $(\UM \oplus \UH_2; L \oplus (\{ 0 \} \times \Z), V_0, V_1, p_0, p_1) \sim (\UM \oplus \UH_2; L \oplus (\Z \times \{ 0 \}), V_0, V_1, p_0, p_1)$, where $L$ is a T-lagrangian in $\UM$ and $V_i$ is a free half-rank direct summand in $\UM \oplus \UH_2$
\end{compactitem}
The equivalence class of $(\UM; L, V_0, V_1, p_0, p_1)$ will be denoted by $[\UM; L, V_0, V_1, p_0, p_1]$. 
\end{defin}

Note that the class $\gc^T_{2q+1}(Q,v,\UP_0,\UP_1,R)$ does not depend on the choice of $q$. 

Recall the definition of $\gc^T_{2q+1}(Q,v,R)$ and its quotients from \cite[Section 4.1]{csn-qfc26}.

\begin{defin}
The natural forgetful maps 
\[
\begin{aligned}
\gc^T_{2q+1}(Q,v,\UP_0,\UP_1,R) &\rightarrow \gc^T_{2q+1}(Q,v,R) \\
\gs^T_{2q+1}(Q,v,\UP_0,\UP_1,R) &\rightarrow \gs^T_{2q+1}(Q,v,R) \\
\s^T_{2q+1}(Q,v,\UP_0,\UP_1,R) &\rightarrow \s^T_{2q+1}(Q,v,R) \\
\ell^T_{2q+1}(Q,v,\UP_0,\UP_1,R) &\rightarrow \ell^T_{2q+1}(Q,v,R)
\end{aligned}
\] 
are given by $(\UM; L, V_0, V_1, p_0, p_1) \mapsto (\UM; L, V_0)$.
\end{defin}

\begin{defin}
Given $(Q,v,\UP_0,\UP_1)$ and an even integer $q \geq 0$, let
\[
\begin{aligned}
\gc_{2q+1}(Q,v,\UP_0,\UP_1) &= \gc^T_{2q+1}(Q,v,\UP_0,\UP_1,0) \\
\gs_{2q+1}(Q,v,\UP_0,\UP_1) &= \gs^T_{2q+1}(Q,v,\UP_0,\UP_1,0) \\
\s_{2q+1}(Q,v,\UP_0,\UP_1) &= \s^T_{2q+1}(Q,v,\UP_0,\UP_1,0) \\
\ell_{2q+1}(Q,v,\UP_0,\UP_1) &= \ell^T_{2q+1}(Q,v,\UP_0,\UP_1,0) 
\end{aligned}
\]
\end{defin}

\begin{defin} \label{def:red}
Suppose that $\UM = (M, \lambda, \mu)$ is an extended quadratic form over $(Q,v)$ such that $\mu \big| _{\Tor M} = 0$. We define the \emph{free reduction} of $\UM$, denoted
\[
\UbM = (\bar{M}, \bar{\lambda}, \bar{\mu})
\]
where $\bar{M} = M / \Tor M$, and $\bar{\lambda} : \bar{M} \times \bar{M} \rightarrow \Z$ and $\bar{\mu} : \bar{M} \rightarrow Q$ are the maps induced by $\lambda$ and $\mu$. 

If $X \leq M$ is a subgroup, then $\bar{X} = \pi_M(X) \leq \bar{M}$ will denote its image in $\bar{M}$, where $\pi_M : M \rightarrow M / \Tor M$ is the quotient map.

Suppose that $x = (\UM; L, V_0, V_1, p_0, p_1)$ is a two-sided quasi-formation over $(Q,v,\UP_0,\UP_1)$ (in particular, $\mu \big| _{\Tor M} = 0$, since $L$ is a T-lagrangian). We define the \emph{free reduction} of $x$, denoted
\[
\bar{x} = (\UbM; \bar{L}, \bar{V}_0, \bar{V}_1, \bar{p}_0, \bar{p}_1)
\]
where $\bar{p}_i = p_i \circ (\pi_M \big| _{V_i})^{-1} : \bar{V}_i \rightarrow P_i$ (using that $\pi_M \big| _{V_i} : V_i \rightarrow \bar{V}_i$ is an isomorphism).
\end{defin}

\begin{prop} \label{prop:red-surj}
a) Free reduction induces a natural surjective map $\ell^T_{2q+1}(Q,v,\UP_0,\UP_1,R) \rightarrow \ell_{2q+1}(Q,v,\UP_0,\UP_1)$.

b) If $x \in \gc^T_{2q+1}(Q,v,\UP_0,\UP_1,R)$ and $\hat{y} \in \gc_{2q+1}(Q,v,\UP_0,\UP_1)$ such that $\bar{x} \sim \hat{y}$, then there is a $y \in \gc^T_{2q+1}(Q,v,\UP_0,\UP_1,R)$ such that $x \sim y$ and $\bar{y} \si \hat{y}$.
\end{prop}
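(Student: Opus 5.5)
Part a) has three ingredients: the free reduction is well defined on equivalence classes, it lands in the right class, and it is surjective.

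First, for $x = (\UM; L, V_0, V_1, p_0, p_1) \in \gc^T_{2q+1}(Q,v,\UP_0,\UP_1,R)$ we check that $\bar{x} \in \gc_{2q+1}(Q,v,\UP_0,\UP_1)$.
\begin{compactitem}
\item $\UbM$ is free, nonsingular and full, since $\mu$ vanishes on $\Tor M$.
\item $\bar{L}$ is a lagrangian, because $\Tor M \leq L$.
\item $\bar{V}_i \cong V_i$, and $\bar{V}_i$ is a half-rank direct summand.
\item From $V_i^\perp = V_{1-i} \oplus \Tor M$ we get $\bar{V}_i^\perp = \bar{V}_{1-i}$.
\item $\bar{p}_i$ is a morphism, since $\bar{\lambda}$ and $\bar{\mu}$ restricted to $\bar{V}_i$ correspond to $\lambda$ and $\mu$ on $V_i$.
\end{compactitem}
Next, free reduction commutes with isomorphisms and with adding $\HH_{2k}$, so it respects stable isomorphism. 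The second elementary equivalence, flipping the lagrangian in a $\UH_2$ summand, reduces to the same kind of move, because the $\UH_2$ summand is free and $\overline{L \oplus (\{0\}\times\Z)} = \bar{L} \oplus (\{0\}\times\Z)$. So the map descends to $\ell^T \to \ell$, and naturality is clear.

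For surjectivity, fix a free two-sided quasi-formation $(\UN; K, U_0, U_1, r_0, r_1)$. I would choose a metabolic extended quadratic form with torsion $R$ and zero free part, i.e.\ $\UR = (R, 0, 0)$, which is metabolic with T-lagrangian $R$ itself, and form $(\UN \oplus \UR; K \oplus R, U_0, U_1, r_0, r_1)$. Its orthogonality condition holds because $U_i^\perp$ in $N \oplus R$ is $U_{1-i} \oplus R$. Fullness is preserved, and its free reduction is the original quasi-formation. (If \cite{csn-qfc26} requires $R$ to carry extra structure, I would use the analogous torsion-only form provided there.)

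Part b) reduces, by induction along a chain of elementary equivalences $\bar{x} = z_0 \sim z_1 \sim \dots \sim z_n = \hat{y}$, to lifting a single step. Suppose $y_j$ is a lift with $y_j \sim x$ and $\bar{y}_j \si z_j$. The two kinds of step are handled as follows.
\begin{compactitem}
\item \emph{Stable isomorphism step.} Nothing needs to be done, since $\si$ is transitive.
\item \emph{Lagrangian flip.} Here $z_j \si (\UN \oplus \UH_2; K \oplus (\{0\}\times\Z), U_0, U_1, r_0, r_1)$ and $z_{j+1} \si$ the same with $\Z \times \{0\}$. I must realise a lift of $z_j$ whose $M$ splits off this $\UH_2$, so that the same flip can be performed upstairs.
\end{compactitem}

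For the flip, I would use that an isomorphism of free reductions lifts. First stabilise $y_j$ by $\HH_{2k}$, so that $\bar{y}_j \oplus \HH_{2k} \cong z_j \oplus \HH_{2l}$. The key lemma I would prove is: if $\bar{y} \cong w$ for some free $w$, then $y \cong y'$ with $y'$ having free reduction literally equal to $w$, and with $M' = \bar{M}' \oplus \Tor M'$ compatibly with a chosen splitting. For this, choose a splitting $\bar{M} \to M$ whose image contains $V_0$. It cannot always contain $V_1$ as well, and this is the main obstacle: the flip moves $L$ but not the $V_i$, so I only need the $\UH_2$ summand to sit orthogonally to $\Tor M$ with $L$ compatible.

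Concretely, lift the hyperbolic basis $e,f$ of the $\UH_2$ to $M$. Since $\lambda$ vanishes on torsion and $\mu|_{\Tor M} = 0$, the lifted span $H$ is a copy of $\UH_2$, and nonsingularity gives $M = H \oplus H^\perp$ with $\Tor M \leq H^\perp$. By choosing the lift of $f$ inside $L$, we get $L = (L \cap H^\perp) \oplus \langle f \rangle$. The $V_i$ are unchanged, so the flip upstairs is a legitimate elementary equivalence, and its free reduction is the flip downstairs.

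The step I expect to need the most care is making the lifts of $e,f$ simultaneously compatible with $L$ and the chosen splitting modulo torsion. It is resolved by adjusting the lifts by elements of $\Tor M$, which changes neither $\lambda$ nor $\mu$.
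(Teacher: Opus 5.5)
Your proof is correct and is essentially the paper's argument. Part a) matches the paper, including the surjectivity trick of adding $(R,0,0)$ with T-lagrangian $R$. For b), both you and the paper reduce to lifting a single lagrangian flip. You lift the hyperbolic pair $\bar e,\bar f$, with $f\in L$, and split off $H$ via $M=H\oplus H^{\perp}$. The paper instead lifts the whole isomorphism at once through $\UM\cong\UbM\oplus(\Tor M,0,0)$, which sends $L$ to $\bar L\oplus\Tor M$ for any splitting.

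Since the flip move places no condition on the $V_i$ and $p_i$, your concerns about a splitting containing $V_0$ or $V_1$, and about making the lifts compatible with a splitting, are unnecessary. What your version leaves implicit is the routine check that $\pi(H^{\perp})=\langle\bar e,\bar f\rangle^{\perp}$ and $\pi(L\cap H^{\perp})=\bar L\cap\pi(H^{\perp})$, which is what makes the reduction of the flipped lift equal to the flipped reduction.
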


\begin{proof}
a) If $(\UM; L, V_0, V_1, p_0, p_1) \in \gc_{2q+1}(Q,v,\UP_0,\UP_1)$, then it is isomorphic to the free reduction of $(\UM; L, V_0, V_1, p_0, p_1) \oplus (\UR; R, 0, 0, 0, 0) \in \gc_{2q+1}(Q,v,\UP_0,\UP_1,R)$, where $\UR = (R, 0, 0)$. Hence free reduction defines a surjective map $\gs^T_{2q+1}(Q,v,\UP_0,\UP_1,R) \rightarrow \gs_{2q+1}(Q,v,\UP_0,\UP_1)$. For any $x \in \gc_{2q+1}(Q,v,\UP_0,\UP_1,R)$ we have $\overline{x \oplus \HH_{2k}} \cong \bar{x} \oplus \HH_{2k}$, showing that there is a well-defined induced map $\s^T_{2q+1}(Q,v,\UP_0,\UP_1,R) \rightarrow \s_{2q+1}(Q,v,\UP_0,\UP_1)$. Finally, if $x_j = (\UM \oplus \UH_2; L \oplus K_j, V_0, V_1, p_0, p_1) \in \gc_{2q+1}(Q,v,\UP_0,\UP_1,R)$ for $K_1 = \{ 0 \} \times \Z$ and $K_2 = \Z \times \{ 0 \}$, then $\bar{x}_j \cong (\UbM \oplus \UH_2; \bar{L} \oplus K_j, \bar{V}_0, \bar{V}_1, \bar{p}_0, \bar{p}_1)$, so $\bar{x}_1 \sim \bar{x}_2$. Therefore the free reduction is also well-defined on equivalence classes. 

b) It is enough to check this when $\bar{x}$ and $\hat{y}$ are related by a single elementary equivalence, which can be assumed to be of the second type (otherwise, if $\bar{x} \si \hat{y}$, then we can take $y=x$). In this case $x$, $\bar{x}$ and $\hat{y}$ are of the form $x = (\UM; L, V_0, V_1, p_0, p_1)$, $\bar{x} = (\UbM; \bar{L}, \bar{V}_0, \bar{V}_1, \bar{p}_0, \bar{p}_1) \cong (\UN \oplus \UH_2; L' \oplus (\{ 0 \} \times \Z), V'_0, V'_1, p'_0, p'_1)$ and $\hat{y} \cong (\UN \oplus \UH_2; L' \oplus (\Z \times \{ 0 \}), V'_0, V'_1, p'_0, p'_1)$. So there are isomorphisms $\UM \cong \UbM \oplus \UR \cong \UN \oplus \UH_2 \oplus \UR$ restricting to $L \cong \bar{L} \oplus R \cong L' \oplus (\{ 0 \} \times \Z) \oplus R$. Let $K$ be the T-lagrangian in $\UM$ corresponding to $L' \oplus (\Z \times \{ 0 \}) \oplus R$, and take $y = (\UM; K, V_0, V_1, p_0, p_1)$. Then $x \sim y$ and $\bar{y} = (\UbM; \bar{K}, \bar{V}_0, \bar{V}_1, \bar{p}_0, \bar{p}_1) \cong \hat{y}$.
\end{proof}

\subsection{Elementary proper two-sided quasi-formations} \label{ss:proper}

Now we describe the conditions under which a two-sided quasi-formation over $(Q,v,\UP_0,\UP_1)$ determines an isomorphism $\UP_0 \xra{\cong} \UP_1$. 

\begin{defin} \label{def:proper}
A two-sided quasi-formation $(\UM; L, V_0, V_1, p_0, p_1)$ is \emph{proper}, if $p_i : V_i \rightarrow P_i$ is surjective for both $i$ and $\Ker p_0 = \Ker p_1 \leq V_0 \cap V_1$. 
\end{defin}

Note that $(\UM; L, V_0, V_1, p_0, p_1)$ is proper if and only if $(\UM; L, V_0, V_1, p_0, p_1) \oplus \HH_{2k}$ is proper, for any $k$. Moreover, since being proper is independent of the choice of T-lagrangian $L$, we see that it is an $\sim$-invariant property.

Recall that a two-sided quasi-formation $(\UM; L, V_0, V_1, p_0, p_1)$ is called elementary if $M = L \oplus V_0$. As in the case of quasi-formations, this property is invariant under stable isomorphism.

\begin{defin}
An element of $\gs^T_{2q+1}(Q,v,\UP_0,\UP_1,R)$ or $\s^T_{2q+1}(Q,v,\UP_0,\UP_1,R)$ is called \emph{elementary}, if its representatives are elementary. An element of $\ell^T_{2q+1}(Q,v,\UP_0,\UP_1,R)$ is called \emph{elementary}, if it has an elementary representative. 
\end{defin}

\begin{lem} \label{lem:pr-el-equiv} 
Suppose that $x = (\UM; L, V_0, V_1, p_0, p_1) \in \gs^T_{2q+1}(Q,v,\UP_0,\UP_1,R)$ is a two-sided quasi-formation and $\bar{x} = (\UbM; \bar{L}, \bar{V}_0, \bar{V}_1, \bar{p}_0, \bar{p}_1) \in \gs_{2q+1}(Q,v,\UP_0,\UP_1)$ is its free reduction. Then 

a) If $x$ is proper, then $\bar{x}$ is proper.

b) $x$ is elementary if and only if $\bar{x}$ is elementary. 

c) $M = L \oplus V_0$ (that is, $x$ is elementary) if and only if $M = L \oplus V_1$.

d) If $x$ is elementary, then projection along $L$ defines an isomorphism $V_0 \rightarrow V_1$.
\end{lem}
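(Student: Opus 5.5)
The plan is to work in the free reduction as much as possible, using only elementary facts: $L$ contains $\Tor M$ and is a half-rank direct summand, the $V_i$ are free half-rank direct summands meeting $\Tor M$ trivially, and $\pi_M|_{V_i}$ is an isomorphism onto $\bar V_i$. The quotient $\pi_M$ is surjective with kernel $\Tor M\le L$, so $\bar M=\bar L+\bar V_i$ if and only if $M=L+V_i$. Also, $\bar L\cap\bar V_i=0$ if and only if $L\cap V_i=0$: if $\pi(l)=\pi(v)$ then $v-l\in\Tor M\le L$, so $v\in L$. This gives (b) immediately.

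For (a), surjectivity of $\bar p_i$ is clear, since $\bar p_i=p_i\circ(\pi_M|_{V_i})^{-1}$. We have $\Ker\bar p_i=\pi_M(\Ker p_i)$. Hence $\Ker p_0=\Ker p_1$ gives $\Ker\bar p_0=\Ker\bar p_1$, and this group lies in $\pi_M(V_0\cap V_1)\le\bar V_0\cap\bar V_1$.

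For (c), the plan is a rank and orthogonality argument. Suppose $M=L\oplus V_0$ and take $x\in L\cap V_1$. Since $L$ is a T-lagrangian, $\lambda(x,L)=0$. Since $V_1\perp V_1$ (as $V_1\le V_0^{\perp}$... rather, we use $V_1^\perp=V_0\oplus\Tor M$), we need $\lambda(x,V_0)$ as well. So the cleaner route is to pass to $\bar M$, where everything is free and $\bar\lambda$ is nonsingular. Elementarity $\bar M=\bar L\oplus\bar V_0$ means $\bar L\cap\bar V_0=0$ and the sum is everything.

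The key step is to show that $\bar V_0\oplus\bar L=\bar M$ if and only if the pairing $\bar\lambda:\bar V_1\times\bar L^{\ast}$-style map is perfect, in a way symmetric in the indices. Concretely, $\bar L$ is a lagrangian, so $\bar L^\perp=\bar L$. The condition $V_i^\perp=V_{1-i}\oplus\Tor M$ gives $\bar V_0^\perp=\bar V_1$ and $\bar V_1^\perp=\bar V_0$ in $\bar M$, using nonsingularity so that orthogonal complements of summands are summands of complementary rank. By nonsingularity, $\bar M=\bar L\oplus X$ for a direct summand $X$ if and only if the map $\bar M\to \bar L^*$ given by $\ad\bar\lambda$ restricts to an isomorphism on the complement of $\bar L^\perp$. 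So I would prove:
\[
\bar M=\bar L\oplus \bar V_0 \iff \bar M=\bar L\oplus\bar V_0^{\perp}.
\]
Equivalently, $\bar L\oplus \bar V_0=\bar M$ holds iff $\ad\bar\lambda:\bar V_0\to\bar L^*$ is an isomorphism. That map is injective iff $\bar V_0\cap\bar L^\perp=\bar V_0\cap\bar L=0$, and then surjectivity over $\Z$ must be checked via the unimodular identification $\bar M^*\cong\bar M$. Its dual is the map $\bar L\to\bar V_0^*$, and by $\bar M/\bar V_0^\perp\cong\bar V_0^*$ this dual is an isomorphism iff $\bar L\oplus\bar V_0^\perp=\bar M$. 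Since $\bar V_0^\perp=\bar V_1$, this yields (c) for $\bar x$, and then (c) for $x$ follows by (b) applied with either index; the argument in (b) was symmetric in $i$. I expect this duality bookkeeping over $\Z$ (direct summands, nonsingularity of $\bar\lambda$) to be the main obstacle, but it is standard linear algebra.

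For (d), assume $x$ is elementary. Then by (c) both $M=L\oplus V_0$ and $M=L\oplus V_1$. Projection along $L$ onto $V_1$ is therefore a well-defined homomorphism $M\to V_1$, and we restrict it to $V_0$. The resulting map $V_0\to V_1$ is injective, because its kernel is $V_0\cap L=0$. It is surjective, because any $v_1\in V_1$ can be written as $v_0+l$ using $M=V_0\oplus L$, and then $v_0$ maps to $v_1$.
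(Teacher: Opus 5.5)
Your proof is correct and is essentially the paper's: (a), (b) and (d) rest on the same observations about $\pi_M$ (an isomorphism on each $V_i$, with kernel $\Tor M \leq L$) and about $M/L$, and for (c) you likewise reduce to the free reduction via (b). The only difference is that you prove the needed equivalence $\bar M = \bar L \oplus \bar V_0 \iff \bar M = \bar L \oplus \bar V_0^{\perp} = \bar L \oplus \bar V_1$ directly by dualising $\ad\bar\lambda\colon \bar V_0 \to \bar L^*$, whereas the paper cites Lemma 2.7 of the earlier Q-form paper for it.
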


\begin{proof}
a) This follows from the fact that $\pi_M$ restricts to an isomorphism $V_i \cong \bar{V}_i$ for both $i$, and to an injective map $V_0 \cap V_1 \rightarrow \bar{V}_0 \cap \bar{V}_1$. In particular, $\Image p_i = \Image \bar{p}_i $ and $\Ker p_i$ is identified with $\Ker \bar{p}_i$.

b) Note that $x$ is elementary if and only if the composition $V_0 \rightarrow M \rightarrow M/L$ is an isomorphism. Since $V_i$ is free and $\Tor M \leq L$, the vertical maps induced by $\pi_M$ are isomorphisms in the diagram 
\[
\xymatrix{
V_i \ar[r] \ar[d]_{\cong} & M/L \ar[d]^{\cong} \\
\bar{V}_i \ar[r] & \bar{M}/\bar{L} 
}
\]
hence $V_i \rightarrow M/L$ is an isomorphism if and only if $\bar{V}_i \rightarrow \bar{M}/\bar{L}$ is an isomorphism.

c) By part b) it is enough to check this for the free reduction. By \cite[Lemma 2.7]{csn-qfc26}, if $\bar{M} = \bar{L} \oplus \bar{V}_i$, then  $\bar{M} = \bar{L}^{\perp} \oplus \bar{V}_i^{\perp} = \bar{L} \oplus \bar{V}_{1-i}$.

d) By part c), if $x$ is elementary, then $V_0 \cong M/L \cong V_1$.
\end{proof}

\begin{defin} \label{def:ix}
Suppose that $x = (\UM; L, V_0, V_1, p_0, p_1)$ is an elementary proper two-sided quasi-formation. Then we define the isomorphism $I_x : P_0 \rightarrow P_1$ as the composition
\[
P_0 \xra{\cong} V_0 / \Ker p_0 \xra{\cong} V_1 / \Ker p_1 \xra{\cong} P_1
\]
In the first and third map we use the isomorphism $V_i / \Ker p_i \rightarrow P_i$ induced by $p_i$ (and the fact that $p_i$ is surjective). For the second map, we use that by Lemma \ref{lem:pr-el-equiv} d) projection along $L$ gives an isomorphism $V_0 \rightarrow V_1$. Since $\Ker p_0 = \Ker p_1 \leq V_0 \cap V_1$ and the projection is the identity on $V_0 \cap V_1$, there is an induced isomorphism $V_0 / \Ker p_0 \rightarrow V_1 / \Ker p_1$.
\end{defin}

\begin{prop}
Suppose that $x = (\UM; L, V_0, V_1, p_0, p_1)$ is an elementary proper two-sided quasi-formation. Then $I_x : \UP_0 \rightarrow \UP_1$ is an isomorphism of extended quadratic forms. 
\end{prop}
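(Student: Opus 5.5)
The plan is to compute directly with representatives. Since $I_x$ is already known to be a group isomorphism by Definition \ref{def:ix}, it remains to check that it preserves the bilinear form and the homomorphism to $Q$. I read $p_1 : \UV_1 \rightarrow -\UPs_1$ as saying that $p_1$ intertwines $\lambda|_{V_1\times V_1}$ with $-\lambda_{P_1}$ and $\mu|_{V_1}$ with $\mu_{P_1}$; that is, $\lambda_{P_1}(p_1(x),p_1(y)) = -\lambda(x,y)$ and $\mu_{P_1}(p_1(x)) = \mu(x)$. If the intended convention for the target differs, the sign bookkeeping below changes accordingly, but the argument has the same shape.

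Take $a, a' \in P_0$ and choose lifts $v_0, v_0' \in V_0$ with $p_0(v_0)=a$ and $p_0(v_0')=a'$, using that $p_0$ is surjective. By Lemma \ref{lem:pr-el-equiv} c) and d), $M = L \oplus V_1$, so we can write $v_0 = l + v_1$ and $v_0' = l' + v_1'$ with $l,l' \in L$ and $v_1,v_1' \in V_1$. By Definition \ref{def:ix}, $I_x(a) = p_1(v_1)$ and $I_x(a') = p_1(v_1')$. This is independent of the chosen lifts, because $\Ker p_0 = \Ker p_1 \leq V_0\cap V_1$ and projection along $L$ is the identity on $V_0 \cap V_1$.

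First, $\mu$ is preserved. Since $\mu|_L = 0$, we get $\mu(v_0) = \mu(l)+\mu(v_1) = \mu(v_1)$. Hence
\[
\mu_{P_1}(I_x(a)) = \mu(v_1) = \mu(v_0) = \mu_{P_0}(a).
\]

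Second, $\lambda$ is preserved up to the sign absorbed by the target of $p_1$; this is the one step needing care. It uses the two orthogonality relations available: $\lambda|_{L\times L}=0$, and $V_1 \leq V_0^{\perp}$ (from $V_0^\perp = V_1 \oplus \Tor M$). The computation is
\[
\lambda(v_0,v_0') = \lambda(v_0, l'+v_1') = \lambda(v_0,l') = \lambda(l+v_1,l') = \lambda(v_1,l').
\]
Similarly, since $\lambda(v_1, v_0') = 0$,
\[
\lambda(v_1,v_1') = \lambda(v_1, v_0'-l') = -\lambda(v_1,l').
\]
Together these give $\lambda(v_1,v_1') = -\lambda(v_0,v_0')$. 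Therefore
\[
\lambda_{P_1}(I_x(a),I_x(a')) = -\lambda(v_1,v_1') = \lambda(v_0,v_0') = \lambda_{P_0}(a,a'),
\]
where the first equality uses that $p_1$ is a morphism to $-\UPs_1$ and the last uses that $p_0$ is a morphism to $\UP_0$.

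Thus $I_x$ is an isomorphism of groups preserving $\lambda$ and $\mu$, that is, an isomorphism $\UP_0 \rightarrow \UP_1$ of extended quadratic forms. I expect no real obstacle beyond matching the sign convention for $-\UPs_1$ with the sign produced by the orthogonality computation.
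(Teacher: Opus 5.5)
Your proof is correct and is essentially the paper's argument: the paper also uses $\mu|_L = 0$, $\lambda|_{L \times L} = 0$ and $V_1 \leq V_0^{\perp}$ to show that projection along $L$ gives $\lambda(y_1,y_2) = -\lambda(x_1,x_2)$ and $\mu(y_i) = \mu(x_i)$, i.e.\ an isomorphism $\UV_0 \rightarrow -\UVs_1$. The only difference is packaging: the paper passes to the induced forms on $V_i/\Ker p_i$, while you work directly with lifts. Your reading of $-\UPs_1$ as $(P_1, -\lambda_{P_1}, \mu_{P_1})$ matches the paper's convention.
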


\begin{proof}
Since $p_0 : \UV_0 \rightarrow \UP_0$ is a morphism of extended quadratic forms, $\lambda \big| _{V_0 \times \Ker p_0} = 0$, and $\mu \big| _{\Ker p_0} = 0$. So there is an induced form $\UhV_0$ on $\hat{V}_0 = V_0 / \Ker p_0$, and $p_0$ induces an isomorphism $\UhV_0 \rightarrow \UP_0$ of extended quadratic forms. Similarly there is an induced form $\UhV_1$ on $\hat{V}_1 = V_1 / \Ker p_1$, and $p_1$ induces an isomorphism $\UhV_1 \rightarrow -\UPs_1$ of extended quadratic forms, which is also an isomorphism $-\UhVs_1 \rightarrow \UP_1$. It remains to show that the projection along $L$ induces an isomorphism $\UhV_0 \rightarrow -\UhVs_1$. 

Suppose that $x_1, x_2 \in V_0$, and their images under the projection along $L$ are $y_1, y_2 \in V_1$, that is, $y_1 = x_1 + l_1$ and $y_2 = x_2 + l_2$ for some $l_1, l_2 \in L$. Since $V_{1-i} \leq V_i^{\perp}$, we have $\lambda(x_1,y_2) = \lambda(y_1,x_2) = 0$. Thus $\lambda(y_1,y_2) = \lambda(x_1+l_1,x_2+l_2) = \lambda(x_1,x_2+l_2) + \lambda(x_1+l_1,x_2) + \lambda(l_1,l_2) - \lambda(x_1,x_2) = - \lambda(x_1,x_2)$. We also have $\mu(y_i) = \mu(x_i) + \mu(l_i) = \mu(x_i)$. This means that the projection along $L$ is an isomorphism $\UV_0 \rightarrow -\UVs_1$ of extended quadratic forms, therefore it induces an isomorphism $\UhV_0 \rightarrow -\UhVs_1$. 
\end{proof}

\begin{lem} \label{lem:I-stab-free} 
Suppose that $x = (\UM; L, V_0, V_1, p_0, p_1)$ is an elementary proper two-sided quasi-formation. Then, by our earlier observations, $x \oplus \HH_{2k}$ (for any $k \geq 0$) and $\bar{x}$ are also elementary and proper, and we have

a) $I_x = I_{x \oplus \HH_{2k}}$.

b) $I_x = I_{\bar{x}}$. 
\end{lem}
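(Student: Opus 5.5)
Both parts come down to checking that each of the three maps in Definition \ref{def:ix} is unchanged, or changed compatibly, under the operation in question. The preliminary claims that $x \oplus \HH_{2k}$ and $\bar{x}$ are elementary and proper were already noted: properness is stable under $\oplus \HH_{2k}$ and passes to the free reduction by Lemma \ref{lem:pr-el-equiv} a). Being elementary is invariant under stable isomorphism and passes to the free reduction by Lemma \ref{lem:pr-el-equiv} b).

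For a), write $x \oplus \HH_{2k} = (\UM \oplus \UH_{2k}; L \oplus L', V_0 \oplus K, V_1 \oplus K, p_0 \oplus 0, p_1 \oplus 0)$, where $L' = \{0\} \times \Z^k$ and $K = \Z^k \times \{0\}$. Then $\Ker(p_i \oplus 0) = \Ker p_i \oplus K$, so $(V_i \oplus K)/\Ker(p_i \oplus 0) = V_i/\Ker p_i$ canonically, and the maps to $P_i$ induced by $p_i \oplus 0$ and by $p_i$ agree under this identification. The projection of $V_0 \oplus K$ onto $V_1 \oplus K$ along $L \oplus L'$ splits as the direct sum of the projection $V_0 \to V_1$ along $L$ and the projection $K \to K$ along $L'$, which is the identity. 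Hence the induced map on the quotients is the same map $V_0/\Ker p_0 \to V_1/\Ker p_1$, and therefore $I_{x \oplus \HH_{2k}} = I_x$.

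For b), use that $\pi_M$ restricts to isomorphisms $V_i \to \bar{V}_i$ and that $\bar{p}_i = p_i \circ (\pi_M|_{V_i})^{-1}$ by definition. It follows that $\pi_M(\Ker p_i) = \Ker \bar{p}_i$, so $\pi_M$ induces isomorphisms $V_i/\Ker p_i \to \bar{V}_i/\Ker \bar{p}_i$ that are compatible with the maps to $P_i$. The remaining step is to check that projection along $L$ commutes with $\pi_M$. If $v_0 \in V_0$ projects to $v_1 \in V_1$, then $v_1 - v_0 \in L$, so $\pi_M(v_1) - \pi_M(v_0) \in \bar{L}$. Since $\bar{M} = \bar{L} \oplus \bar{V}_1$ by Lemma \ref{lem:pr-el-equiv} b) and c), this forces $\pi_M(v_1)$ to be the projection of $\pi_M(v_0)$ along $\bar{L}$. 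Putting these together, all three maps in the composition match, so $I_{\bar{x}} = I_x$. Neither part has a real obstacle; the only point needing care is the uniqueness of the decomposition in $\bar{M}$ used in the last step.
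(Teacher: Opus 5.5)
Your proposal is correct and follows the paper's approach: the paper's proof is exactly the two commutative diagrams comparing the three-step compositions of Definition \ref{def:ix}, with vertical maps induced by the inclusion $M \rightarrow M \oplus \Z^{2k}$ and by $\pi_M$. You additionally verify explicitly that the middle squares commute, i.e.\ that projection along $L \oplus (\{0\} \times \Z^k)$, respectively along $\bar{L}$, is compatible with these maps, which the paper leaves implicit.
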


\begin{proof}
The compositions defining $I_x$, $I_{x \oplus \HH_{2k}}$ and $I_{\bar{x}}$ fit into commutative diagrams
\[
\xymatrix{
P_0 \ar[r]^-{\cong} \ar@{=}[d] & V_0 / \Ker p_0 \ar[r]^-{\cong} \ar[d]_-{\cong} & V_1 / \Ker p_1 \ar[r]^-{\cong} \ar[d]^-{\cong} & P_1 \ar@{=}[d] \\
P_0 \ar[r]^-{\cong} & (V_0 \oplus (\Z^k \times \{ 0 \})) / \Ker (p_0 \oplus 0) \ar[r]^-{\cong} & (V_1 \oplus (\Z^k \times \{ 0 \})) / \Ker (p_1 \oplus 0) \ar[r]^-{\cong} & P_1 \\
}
\]
and
\[
\xymatrix{
P_0 \ar[r]^-{\cong} \ar@{=}[d] & V_0 / \Ker p_0 \ar[r]^-{\cong} \ar[d]_-{\cong} & V_1 / \Ker p_1 \ar[r]^-{\cong} \ar[d]^-{\cong} & P_1 \ar@{=}[d] \\
P_0 \ar[r]^-{\cong} & \bar{V}_0 / \Ker \bar{p}_0 \ar[r]^-{\cong} & \bar{V}_1 / \Ker \bar{p}_1 \ar[r]^-{\cong} & P_1 \\
}
\]
induced by the restrictions of the inclusion $M \rightarrow M \oplus \Z^{2k}$ and of $\pi_M$, respectively.
\end{proof}

In particular, it makes sense for a stable isomorphism class $x \in \s^T_{2q+1}(Q,v,\UP_0,\UP_1,R)$ to be elementary or proper, and if it is both, then it induces a well-defined isomorphism $I_x : \UP_0 \rightarrow \UP_1$.

\subsection{The set $\ell_{2q+1}(Q,v,\UP_0,\UP_1)$}

Now we establish the basic properties of $\ell_{2q+1}(Q,v,\UP_0,\UP_1)$, analogously to the results in \cite[Section 4.2]{csn-qfc26}. Most of the proofs can be adapted to the new setting in a straightforward way, replacing quasi-formations $(\UM; L, V)$ with two-sided quasi-formations $(\UM; L, V_0, V_1, p_0, p_1)$, and for any operation on $V = V_0$, applying the corresponding operations to $V_1$, $p_0$ and $p_1$. To illustrate this, we will give a full proof of the first lemma, and of the main Theorem \ref{thm:jacobi}; for the remaining statements we refer to \cite{csn-qfc26}.

Recall the definition of the subgroup $\RU(\UM,L) \leq \Aut(\UM)$ from \cite[Section 3]{csn-qfc26}.

\begin{lem} \label{lem:ru-equiv}
Suppose that $(\UM; L, V_0, V_1, p_0, p_1) \in \gc_{2q+1}(Q,v,\UP_0,\UP_1)$. If $\Psi \in \RU(\UM,L)$, then we have $(\UM; L, V_0, V_1, p_0, p_1) \sim (\UM; L, \Psi(V_0), \Psi(V_1), p_0 \circ \Psi^{-1}, p_1 \circ \Psi^{-1})$.
\end{lem}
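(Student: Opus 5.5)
We need to recall what $\RU(\UM,L)$ is. In \cite[Section 3]{csn-qfc26} it is the subgroup of $\Aut(\UM)$ generated by automorphisms that preserve $L$ (isometries fixing $L$ setwise) together with automorphisms of the form that are "elementary" with respect to $L$. The plan mirrors the proof of the analogous one-sided lemma in \cite[Section 4.2]{csn-qfc26}, which shows $(\UM;L,V) \sim (\UM;L,\Psi(V))$.

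First, since the relation in the statement is compatible with composition, it suffices to treat generators of $\RU(\UM,L)$. Indeed, if the claim holds for $\Psi_1$ and $\Psi_2$, then applying it to $\Psi_2$ and the two-sided quasi-formation $(\UM; L, \Psi_1(V_0), \Psi_1(V_1), p_0\circ\Psi_1^{-1}, p_1\circ\Psi_1^{-1})$ gives the claim for $\Psi_2\Psi_1$.

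The first kind of generator is an automorphism $\Psi$ with $\Psi(L)=L$. For such $\Psi$, the map $\Psi$ itself is an isomorphism of two-sided quasi-formations
\[
(\UM; L, V_0, V_1, p_0, p_1) \rightarrow (\UM; L, \Psi(V_0), \Psi(V_1), p_0\circ\Psi^{-1}, p_1\circ\Psi^{-1}).
\]
It maps $L$ to $L$ and $V_i$ to $\Psi(V_i)$, and $(p_i\circ\Psi^{-1})\circ\Psi = p_i$. The conditions of Definition \ref{def:2sqf} transfer because $\Psi$ is an isometry that also preserves $\mu$, and fullness is preserved as well. Hence the two are isomorphic, so equivalent.

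The remaining generators are the ones in \cite{csn-qfc26} that do not fix $L$. In the one-sided proof, the equivalence is realised by stabilising with $\HH_{2k}$, applying a sequence of elementary equivalences of the second type (swapping $\{0\}\times\Z$ and $\Z\times\{0\}$ in a hyperbolic summand), and applying isomorphisms preserving the relevant $L$. I will run the same chain of moves, carrying $V_1$, $p_0$, $p_1$ along. Stabilisation adds $K'$ to each $V_i$ and extends $p_i$ by $0$. The swap move changes only the T-lagrangian, and Definition \ref{def:gc} allows arbitrary $V_0,V_1$ (and so arbitrary $p_i$). Each isomorphism $h$ used transports $V_1$ to $h(V_1)$ and $p_i$ to $p_i\circ h^{-1}$. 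Composing along the chain, the ending two-sided quasi-formation has $V_i$ replaced by $\Psi(V_i)$ and $p_i$ by $p_i\circ\Psi^{-1}$ after destabilisation. This works because the one-sided argument identifies the final object with $(\UM;L,\Psi(V_0))$ via a single explicit isomorphism, and that isomorphism is determined on all of $M$, not only on $V_0$.

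The main obstacle is this last identification. One must check that the isomorphisms in the one-sided proof act on all of $M$ (plus the stabilising summand) compatibly with $\Psi$, and not just on $V_0$. One must also check that they fix the stabilising $\Z^k\times\{0\}$ parts, so that the extended-by-zero $p_i$ match. I would verify this by writing the one-sided argument explicitly for the generator: the "shear" $\Psi$ is realised as a composite of the hyperbolic swaps on $\UM\oplus\UH_{2k}$, using that $\UM$ is free here (the class is $\gc_{2q+1}$), so $\UM\oplus\UH_{2k}$ has a lagrangian complement to $L$.
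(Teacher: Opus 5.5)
Your reduction to generators and your treatment of the $L$-preserving generators are exactly what the paper does, and your principle of carrying $V_1$, $p_0$, $p_1$ along through each move is also the right one. The gap is in the other kind of generator. You never identify it: you call it ``elementary'' and later a ``shear'' realised by swaps on a stabilisation. So the only non-trivial case rests on a guessed, and inaccurate, picture of the one-sided argument (stabilisation, several swaps, $L$-preserving isomorphisms). You also explicitly leave its decisive step as the ``main obstacle''. As written, that case is a plan, not a proof.

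The missing input is the definition from \cite[Section 3]{csn-qfc26}. The remaining generators are $\Psi = I^{-1}\circ(\sigma\oplus\id_{M'})\circ I$, where $I:\UM\to\UH_2\oplus\UMp$ is an isomorphism with $I(L) = (\{0\}\times\Z)\oplus L'$ for a lagrangian $L'$ of $\UMp$, and $\sigma$ is the coordinate swap of $\UH_2$. With this in hand the case takes three moves and no stabilisation.
\begin{enumerate}
\item Since $I\circ\Psi(L) = (\sigma\oplus\id_{M'})(I(L)) = (\Z\times\{0\})\oplus L'$, the map $I\circ\Psi$ is an isomorphism from $(\UM; L, V_0, V_1, p_0, p_1)$ to $(\UH_2\oplus\UMp; (\Z\times\{0\})\oplus L', I\Psi(V_0), I\Psi(V_1), p_0\Psi^{-1}I^{-1}, p_1\Psi^{-1}I^{-1})$.
\item One elementary equivalence of the second type replaces the lagrangian by $(\{0\}\times\Z)\oplus L' = I(L)$, leaving the $V_i$ and $p_i$ untouched.
\item The isomorphism $I^{-1}$ then identifies the result with $(\UM; L, \Psi(V_0), \Psi(V_1), p_0\circ\Psi^{-1}, p_1\circ\Psi^{-1})$.
\end{enumerate}
Your worries never arise here: compatibility with $\Psi$ on all of $M$, fixing stabilising summands, and a complement to $L$. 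Both isomorphisms are isomorphisms of whole forms, and their composite is $I^{-1}\circ I\circ\Psi = \Psi$. So the transported data are automatically $\Psi(V_i)$ and $p_i\circ\Psi^{-1}$.
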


\begin{proof}
It is enough to prove the statement when $\Psi$ is a generator of $\RU(\UM,L)$. 

If $\Psi(L) = L$, then $\Psi$ is an isomorphism between $(\UM; L, V_0, V_1, p_0, p_1)$ and $(\UM; L, \Psi(V_0), \Psi(V_1), p_0 \circ \Psi^{-1}, p_1 \circ \Psi^{-1})$.

If there is an isomorphism $I : \UM \rightarrow \UH_2 \oplus \UMp$ such that $I(L) = ( \{0 \} \times \Z) \oplus L'$ for some lagrangian $L'$ in $\UMp$ and $\Psi = I^{-1} \circ (\sigma \oplus \id_{M'}) \circ I$, then $I \circ \Psi(L) = (\sigma \oplus \id_{M'}) \circ I(L) = (\sigma \oplus \id_{M'})(( \{0 \} \times \Z) \oplus L') = (\Z \times \{0 \} ) \oplus L'$. So 
\[
\begin{aligned}
 &(\UM; L, V_0, V_1, p_0, p_1) \cong \\
 &\cong (\UH_2 \oplus \UMp; I \circ \Psi(L), I \circ \Psi(V_0), I \circ \Psi(V_1), p_0 \circ \Psi^{-1} \circ I^{-1}, p_1 \circ \Psi^{-1} \circ I^{-1}) = \\
 &= (\UH_2 \oplus \UMp; (\Z \times \{0 \} ) \oplus L', I \circ \Psi(V_0), I \circ \Psi(V_1), p_0 \circ \Psi^{-1} \circ I^{-1}, p_1 \circ \Psi^{-1} \circ I^{-1}) \sim \\
 &\sim (\UH_2 \oplus \UMp; ( \{0 \} \times \Z) \oplus L', I \circ \Psi(V_0), I \circ \Psi(V_1), p_0 \circ \Psi^{-1} \circ I^{-1}, p_1 \circ \Psi^{-1} \circ I^{-1}) = \\
 &= (\UH_2 \oplus \UMp; I(L), I \circ \Psi(V_0), I \circ \Psi(V_1), p_0 \circ \Psi^{-1} \circ I^{-1}, p_1 \circ \Psi^{-1} \circ I^{-1}) \cong \\
 &\cong (\UM; L, \Psi(V_0), \Psi(V_1), p_0 \circ \Psi^{-1}, p_1 \circ \Psi^{-1})
\end{aligned}
\]
where we used the isomorphisms $I \circ \Psi$ and $I$. Therefore the statement holds for every generator of $\RU(\UM,L)$.
\end{proof}

\begin{lem} \label{lem:hyp-zero}
If $(\UM; L, V_0, V_1, p_0, p_1) \in \gc_{2q+1}(Q,v,\UP_0,\UP_1)$ and $k \geq 0$, then $(\UM; L, V_0, V_1, p_0, p_1) \sim (\UM; L, V_0, V_1, p_0, p_1) \oplus (\UH_{2k}; \{ 0 \} \times \Z^k, \{ 0 \} \times \Z^k, \{ 0 \} \times \Z^k, 0, 0)$. 
\end{lem}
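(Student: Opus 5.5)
We must show $x=(\UM; L, V_0, V_1, p_0, p_1)$ is equivalent to $x \oplus (\UH_{2k}; K, K, K, 0, 0)$ with $K = \{0\}\times\Z^k$. The plan mirrors the one-sided analogue in \cite[Section 4.2]{csn-qfc26}. We compare this with the stabilisation $x \oplus \HH_{2k}$, which by definition is $(\UM \oplus \UH_{2k}; L \oplus K, V_0 \oplus K', V_1 \oplus K', p_0 \oplus 0, p_1 \oplus 0)$ with $K' = \Z^k \times \{0\}$. This is stably isomorphic, hence equivalent, to $x$. So it suffices to show
\[
(\UM \oplus \UH_{2k}; L \oplus K, V_0 \oplus K', V_1 \oplus K', p_0 \oplus 0, p_1 \oplus 0) \sim (\UM \oplus \UH_{2k}; L \oplus K, V_0 \oplus K, V_1 \oplus K, p_0 \oplus 0, p_1 \oplus 0).
\]

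By induction on $k$ (using that $\UH_{2k} = \UH_2^{\oplus k}$ and that direct sum with a two-sided formation is compatible with these moves), we reduce to $k=1$. In that case there are two tools, and I would first try the direct one. We apply the second elementary equivalence twice, together with an isomorphism swapping the two coordinates of $\UH_2$. Concretely, let $\sigma$ be the swap automorphism of $\UH_2$; it is an isomorphism of extended quadratic forms because $\mu=0$ and the matrix is symmetric.

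The steps are as follows:
\begin{compactenum}
\item By the second elementary equivalence, change the T-lagrangian $L \oplus K$ to $L \oplus K'$, keeping $V_i \oplus K'$ and the $p_i$. This is allowed because $V_i \oplus K'$ is a free half-rank direct summand of $\UM \oplus \UH_2$.
\item Apply the isomorphism $\id_M \oplus \sigma$. The result is $(\UM \oplus \UH_2; L \oplus K, V_0 \oplus K, V_1 \oplus K, p_0 \oplus 0, p_1 \oplus 0)$, with $p_i$ composed with the inverse isomorphism. Since $\sigma$ only acts on the $\UH_2$ summand, where $p_i \oplus 0$ vanishes, the maps $p_i \oplus 0$ are unchanged.
\end{compactenum}
This gives exactly the required target.

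One must check that all intermediate tuples are genuine two-sided quasi-formations in $\gc_{2q+1}(Q,v,\UP_0,\UP_1)$. Fullness is clear since $\mu$ on the sum restricts to $\mu$ on $M$. The conditions $(V_i \oplus K)^\perp = V_{1-i} \oplus K$ and $(V_i \oplus K')^\perp = V_{1-i}\oplus K'$ hold since $K, K'$ are lagrangians in $\UH_2$ and there is no torsion. The $p_i \oplus 0$ are morphisms because $K, K'$ are isotropic with $\mu = 0$.

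The main obstacle is just bookkeeping rather than a conceptual step: ensuring that the elementary equivalence in Definition \ref{def:gc} is applied with the $\UH_2$ summand in the right position. In particular, $L$ must be replaced by the appropriate T-lagrangian of the complementary summand. One must also check that the identification of $\UH_{2k}$ with iterated $\UH_2$'s respects the splitting of $K$ and $K'$ coordinatewise, which it does for the standard bases. Alternatively, one could invoke Lemma \ref{lem:ru-equiv} with $\Psi = \id \oplus \sigma^{\oplus k}$ composed appropriately, but the direct argument above avoids needing to verify that $\Psi$ lies in $\RU$.
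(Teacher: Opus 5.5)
Your proof is correct and is essentially the argument the paper intends; it simply defers to the one-sided proof of \cite[Lemma 4.20]{csn-qfc26}. That argument stabilises by $\HH_{2k}$, uses the second elementary equivalence to move the T-lagrangian from $L \oplus (\{0\}\times\Z^k)$ to $L \oplus (\Z^k\times\{0\})$, and then applies the coordinate swap on $\UH_{2k}$, which fixes $p_i \oplus 0$. The only extra two-sided bookkeeping is that both $V_i \oplus (\Z^k \times \{0\})$ give valid tuples and that the $p_i \oplus 0$ are compatible with the swap, and you checked both.
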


\begin{proof}
Follows the proof of \cite[Lemma 4.20]{csn-qfc26}.
\end{proof}

\begin{prop} \label{prop:lmonoid-zero}
a) The set $\ell_{2q+1}(Q,v,\UP_0,\UP_1)$ is non-empty. 

b) All elements of $\gc_{2q+1}(Q,v,\UP_0,\UP_1)$ of the form $(\UM; L, L, L, 0, 0)$ are equivalent.

c) If $(\UM; L, L, L, 0, 0), x \in \gc_{2q+1}(Q,v,\UP_0,\UP_1)$, then $(\UM; L, L, L, 0, 0) \oplus x \sim x$.
\end{prop}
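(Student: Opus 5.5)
The plan is to follow the pattern of the corresponding one-sided statement in \cite[Section 4.2]{csn-qfc26}: each operation is performed on $V_0$ and $V_1$ simultaneously, and the maps $p_0$, $p_1$ are carried along. The three parts are proved in the order a), b), c).

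For a), we need a full two-sided quasi-formation with trivial torsion. Choose a free group $N$ with a surjection $\phi: N \rightarrow Q$, say $N = \Z^n$. On $\UH_{2n}$, let $\mu$ be $0$ on the lagrangian $L = \{0\} \times \Z^n$ and $\phi$ on $\Z^n \times \{0\}$. Then $\UH_{2n}$ is not geometric in general, so instead use the construction of the one-sided proof of nonemptiness of $\ell_{2q+1}(Q,v)$: there is a full metabolic form $\UM$ with a lagrangian $L$. Then take $(\UM; L, L, L, 0, 0)$. We have $L^\perp = L = L \oplus \Tor M$ since $M$ is free, and the zero maps are morphisms because $\lambda$ and $\mu$ vanish on $L$. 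So this is an element of $\gc_{2q+1}(Q,v,\UP_0,\UP_1)$.

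For b), take $x = (\UM; L, L, L, 0, 0)$ and $x' = (\UMp; L', L', L', 0, 0)$. The idea is that the extra data $(V_1, p_0, p_1) = (V_0, 0, 0)$ is determined by $V_0$. Every operation in the one-sided proof preserves the property $V_0 = V_1$ with $p_i = 0$:
\begin{compactitem}
\item isomorphisms of quasi-formations act equally on $V_0$ and $V_1$, and zero maps stay zero;
\item stabilisation by $\HH_{2k}$ adds $\Z^k \times \{0\}$ to both $V_0$ and $V_1$;
\item the $\UH_2$-swap of lagrangians does not touch $V_i$ or $p_i$;
\item Lemma \ref{lem:ru-equiv} is the two-sided analogue of the $\RU$-moves.
\end{compactitem}
Hence the one-sided chain of equivalences $(\UM; L, L) \sim (\UMp; L', L')$ lifts verbatim to the two-sided setting. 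The one step that needs checking is the first point: an isomorphism taking $(\UM;L,L)$ to $(\UMp;L',L')$ automatically respects $p_i = 0$.

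For c), adapt the proof of the corresponding one-sided statement. In that proof, $(\UM; L, L) \oplus (\UMp; L', V')$ is related to $(\UMp; L', V')$ using Lemma \ref{lem:hyp-zero} and the $\RU$-equivalences. Here the direct sum is the one defined for a two-sided formation and a two-sided quasi-formation: the summand $(\UM; L, L, L, 0, 0)$ contributes $L$ to both $V_0$ and $V_1$ and the zero map to both $p_i$.

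The main obstacle is this transfer. The one-sided argument rotates $V$ into a standard position by an element $\Psi \in \RU$. In the two-sided version $\Psi$ must be applied to $V_0$ and $V_1$ together, and the $\RU$ element must be chosen to act as the identity on the $x$-summand. Then $p_0$ and $p_1$ are unchanged, since $p_i \circ \Psi^{-1} = p_i$ there. I would first try to choose $\Psi$ supported on $\UM \oplus \UH_{2k}$, using that $\UM$ is free and metabolic and hence stably hyperbolic with respect to its $\mu$. This reduces c) to Lemma \ref{lem:hyp-zero}, after which stable isomorphism removes the hyperbolic summand.
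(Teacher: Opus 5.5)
Parts a) and b) are fine. For b), your observation is correct: each elementary move in a one-sided chain $(\UM;L,L)\sim(\UMp;L',L')$ keeps $V$ a lagrangian, so the whole chain lifts move by move to the two-sided quasi-formations $(\,\cdot\,;\cdot,V,V,0,0)$. This is a clean way to deduce b) from the one-sided statement.

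The gap is in c). Your reduction to Lemma \ref{lem:hyp-zero} needs two things: that $\UM$ is ``stably hyperbolic'', and an isometry that is the identity on the $x$-summand. Both fail whenever $\mu|_M \neq 0$.
\begin{itemize}
\item Isomorphisms of extended quadratic forms preserve $\Image \mu$, and $\UH_{2k}$ has $\mu=0$. So $\UM\oplus\UH_{2k}$ is never isomorphic to a hyperbolic form.
\item An isometry of $\UM\oplus\UH_{2k}\oplus\UMp$ that fixes $M'$ pointwise must preserve $(M')^{\perp}=M\oplus\Z^{2k}$. It therefore restricts to an automorphism of $\UM\oplus\UH_{2k}$ and cannot move the nonzero values of $\mu$ out of that summand.
\end{itemize}
Your argument also never uses that $x$ is full, yet this is essential. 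Since $\Image\mu$ is a $\sim$-invariant, c) would be false for non-full $x$.

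The missing step is to absorb $\mu|_M$ into the $x$-summand, using fullness, by an isomorphism that mixes the two summands.
\begin{itemize}
\item Write $x=(\UMp;L',V'_0,V'_1,p'_0,p'_1)$ with $\UMp=(M',\lambda',\mu')$.
\item Pick a basis $e_j$ of $L$ and elements $f_j\in M$ with $\lambda(e_i,f_j)=\delta_{ij}$.
\item By fullness, pick $a_j\in M'$ with $\mu'(a_j)=\mu(f_j)$.
\item Let $\tilde e_j$, $\tilde f_j$ be the standard bases of $\{0\}\times\Z^k$ and $\Z^k\times\{0\}$.
\item Define $I:\UMp\oplus\UM\to\UMp\oplus\UH_{2k}$ by $m'\mapsto(m',-\sum_j\lambda'(m',a_j)\tilde e_j)$, $e_j\mapsto\tilde e_j$, and $f_j\mapsto(a_j,\tilde f_j+\sum_l c_{jl}\tilde e_l)$.
\item Choose the integers $c_{jl}$ so that $\lambda(f_i,f_j)$ is preserved. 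On the diagonal this is possible because both forms are geometric, so $\lambda(f_i,f_i)-\lambda'(a_i,a_i)$ is even.
\end{itemize}
This $I$ is an isomorphism with $I(L)=\{0\}\times\Z^k$, and $I(m')-m'\in\{0\}\times\Z^k$ for all $m'\in M'$. Hence $I(L'\oplus L)=L'\oplus(\{0\}\times\Z^k)$ and $I(V'_i\oplus L)=V'_i\oplus(\{0\}\times\Z^k)$. Moreover $(p'_i\oplus0)\circ I=p'_i\oplus0$ on $V'_i\oplus L$ for both $i$, which is exactly the check the paper adds to the one-sided proof. Lemma \ref{lem:hyp-zero} then finishes c).
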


\begin{proof}
Follows the proof of \cite[Proposition 4.21]{csn-qfc26}. For part c), the calculation that showed that $I(V \oplus L) =  V \oplus (\{ 0 \} \times \Z^k)$ can be applied to both $V_0$ and $V_1$, hence $I(V_i \oplus L) =  V_i \oplus (\{ 0 \} \times \Z^k)$ for both $i$. Further, one checks that $(p_i \oplus 0) \circ I = p_i \oplus 0 : V_i \oplus L \rightarrow P_i$.
\end{proof}

\begin{lem} \label{lem:nkk-zero}
Suppose that $(\UM; L, V_0, V_1, p_0, p_1) \in \gc_{2q+1}(Q,v,\UP_0,\UP_1)$, $\UN$ is a free metabolic form over $(Q,v)$ and $K$ is a lagrangian in $\UN$ (so that $(\UM; L, V_0, V_1, p_0, p_1) \oplus (\UN; K, K, K, 0, 0) \in \gc_{2q+1}(Q,v,\UP_0,\UP_1)$). Then $(\UM; L, V_0, V_1, p_0, p_1) \sim (\UM; L, V_0, V_1, p_0, p_1) \oplus (\UN; K, K, K, 0, 0)$. 
\end{lem}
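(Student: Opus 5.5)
The plan is to reduce Lemma \ref{lem:nkk-zero} to Proposition \ref{prop:lmonoid-zero} and Lemma \ref{lem:hyp-zero}, following the analogous statement in \cite[Section 4.2]{csn-qfc26}. The key point is that $(\UN; K, K, K, 0, 0)$ is of the form appearing in Proposition \ref{prop:lmonoid-zero} b). We will show that adding it does not change the equivalence class.

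First, since $\UN$ is free and metabolic, the free reduction is trivial. Stabilising by a hyperbolic form, $\UN \oplus \UH_{2m}$ has the lagrangian $K \oplus (\{0\} \times \Z^m)$.

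We then compare with a form of the shape in Proposition \ref{prop:lmonoid-zero} b). We want some $(\UN'; K', K', K', 0, 0)$ with $\UN'$ free and metabolic, such that
\[
(\UN; K, K, K, 0, 0) \oplus (\UN'; K', K', K', 0, 0)
\]
is stably isomorphic to $(\UH_{2k}; \{0\}\times\Z^k, \{0\}\times\Z^k, \{0\}\times\Z^k, 0, 0)$. The natural candidate is $\UN' = -\UN$ with $K' = K$. Then $\UN \oplus -\UN$ contains the lagrangians $K \oplus K$ and the diagonal $\Delta$, and in the one-sided setting \cite{csn-qfc26} this is how such sums are handled. There is a subtlety here: $\mu$ need not vanish on $\UN$, so $\UN \oplus -\UN$ is not obviously hyperbolic.

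A cleaner route is to apply Proposition \ref{prop:lmonoid-zero} c) directly. That statement requires the \emph{first} summand to be of the form $(\UM; L, L, L, 0, 0)$, and we are allowed to place it either way up to isomorphism. The difficulty is that c) as stated has the same ambient form $\UM$ in both summands. Reading its proof, however, the form $(\UM; L, L, L, 0, 0)$ there is an arbitrary element of $\gc_{2q+1}$ with $V_0 = V_1 = L$. So what we need is that $(\UN; K, K, K, 0, 0) \in \gc_{2q+1}(Q,v,\UP_0,\UP_1)$.

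The obstruction is fullness: $\mu_N$ need not be surjective. To fix this, we write
\[
(\UM; L, V_0, V_1, p_0, p_1) \oplus (\UN; K, K, K, 0, 0) \cong (\UN; K, K, K, 0, 0) \oplus (\UM; L, V_0, V_1, p_0, p_1)
\]
and rerun the argument of Proposition \ref{prop:lmonoid-zero} c) with the roles adapted. That is, we construct an isomorphism $I$ of $\UN \oplus \UM$ sending $K \oplus L$ to itself and $K \oplus V_i$ into the required position, with $(0 \oplus p_i) \circ I = 0 \oplus p_i$. This reduces the sum to $x \oplus (\UH_{2k}; \{0\}\times\Z^k, \ldots)$, and Lemma \ref{lem:hyp-zero} then finishes.

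The main obstacle is constructing this isomorphism $I$ when $\mu_N \neq 0$. The construction uses fullness of $\UM$: lift $\mu_N$ on a complement of $K$ through $\mu$ restricted to $V_0$ or $L$, which is possible because $\mu$ on $M$ is surjective and $L$ is a T-lagrangian. Those lifts are then used to shear, giving an element of $\RU$. The transfer of this step to the two-sided setting goes through Lemma \ref{lem:ru-equiv}. It is necessary to check that the shear fixes $V_0 \cap V_1$ compatibly and preserves $p_0$ and $p_1$, which holds because the shear is the identity modulo $K$ and both $p_i$ vanish on the $K$-component.
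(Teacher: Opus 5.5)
\textbf{There is a gap in the step you yourself call the main obstacle.} The strategy is viable: absorb $\mu$ on a complement of $K$ using fullness of $\UM$, then apply Lemma \ref{lem:hyp-zero}. But the construction you describe does not work, for three reasons.
\begin{itemize}
\item You cannot lift $\mu_N$ ``through $\mu$ restricted to $V_0$ or $L$''. We have $\mu|_L=0$ because $L$ is a lagrangian. Also $\mu|_{V_0}$ need not be onto: for $x=(\UM;L,L,L,0,0)$ it is zero.
\item The map cannot be ``the identity modulo $K$'' everywhere. Since $\mu|_K=0$, such a map leaves $\mu$ unchanged on a complement of $K$ in $N$, so it never exhibits $K$ inside a hyperbolic summand.
\item An element of $\RU(\UN\oplus\UM,K\oplus L)$ together with Lemma \ref{lem:ru-equiv} only moves the $V_i$ inside the fixed form $\UN\oplus\UM$. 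What you actually need is an isomorphism of two-sided quasi-formations $x\oplus(\UN;K,K,K,0,0)\cong x\oplus(\UH_{2k};\{0\}\times\Z^k,\{0\}\times\Z^k,\{0\}\times\Z^k,0,0)$.
\end{itemize}

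Such an isomorphism exists, but it must be built differently.
\begin{itemize}
\item Take a basis $k_j$ of $K$ and elements $k'_j\in N$ with $\lambda(k_j,k'_l)=\delta_{jl}$.
\item Use surjectivity of $\mu$ on all of $M$ to choose $m_j\in M$ with $\mu(m_j)=\mu(k'_j)$.
\item Set $f_j=k'_j-m_j+\sum_l c_{jl}k_l$, choosing the $c_{jl}$ so that the $f_j$ span an isotropic subgroup. On the diagonal this requires $\lambda(k'_j,k'_j)\equiv\lambda(m_j,m_j)$ modulo $2$. That is exactly where geometricity of both forms enters, and your sketch never uses it.
\item Embed $\UM$ onto the orthogonal complement of $\langle k_j,f_j\rangle$ by $m\mapsto m+\sum_j\lambda(m,m_j)k_j$.
\end{itemize}
Only this last map is the identity modulo $K$, and that is what makes $K\oplus L$, $K\oplus V_i$ and $0\oplus p_i$ correspond.

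\textbf{No new construction is actually needed.} The paper only points to the one-sided argument, and the statement reduces formally to Proposition \ref{prop:lmonoid-zero} c), which holds for arbitrary $x$ once the formation summand lies in $\gc$. The fullness problem you identified disappears if you enlarge $(\UN;K,K,K,0,0)$ by $x$'s own form. Both $(\UM;L,L,L,0,0)$ and $(\UM\oplus\UN;L\oplus K,L\oplus K,L\oplus K,0,0)$ lie in $\gc_{2q+1}(Q,v,\UP_0,\UP_1)$ because $\UM$ is free and full. Set $y=x\oplus(\UN;K,K,K,0,0)$, which lies in $\gc$ by hypothesis. Applying part c) first to $y$ and then to $x$ gives
\[
y\sim(\UM;L,L,L,0,0)\oplus y\cong(\UM\oplus\UN;L\oplus K,L\oplus K,L\oplus K,0,0)\oplus x\sim x .
\]
The middle isomorphism just permutes summands, and it visibly respects the $p_i$.
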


\begin{proof}
Follows the proof of \cite[Lemma 4.24]{csn-qfc26}.
\end{proof}

\begin{lem} \label{lem:ru-st-equiv}
Suppose that $(\UM; L, V_0, V_1, p_0, p_1) \in \gc_{2q+1}(Q,v,\UP_0,\UP_1)$. If $\Psi \in \RU_{\st}(\UM,L)$, then 
\[
(\UM; L, V_0, V_1, p_0, p_1) \sim (\UM; L, \Psi(V_0), \Psi(V_1), p_0 \circ \Psi^{-1}, p_1 \circ \Psi^{-1}) \text{\,.}
\] 
\end{lem}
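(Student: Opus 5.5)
The plan is to follow the proof of the corresponding one-sided statement for $\RU_{\st}$ in \cite{csn-qfc26}, reducing to Lemma \ref{lem:ru-equiv} by stabilising. By definition, $\Psi \in \RU_{\st}(\UM,L)$ means that for some $k \geq 0$ the automorphism $\Psi \oplus \id$ of $\UM \oplus \UH_{2k}$ lies in $\RU(\UM \oplus \UH_{2k}, L \oplus (\{0\} \times \Z^k))$, possibly after composing with further stabilisations. I will stabilise with a hyperbolic summand whose T-lagrangian, $V_0$-part and $V_1$-part all equal $\{0\} \times \Z^k$.

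\textbf{Step 1.} By Lemma \ref{lem:hyp-zero},
\[
x = (\UM; L, V_0, V_1, p_0, p_1) \sim x \oplus (\UH_{2k}; \{0\}\times\Z^k, \{0\}\times\Z^k, \{0\}\times\Z^k, 0, 0) =: x'.
\]
Here $x'$ has T-lagrangian $L' = L \oplus (\{0\} \times \Z^k)$, and its $V_i$-part is $V_i \oplus (\{0\} \times \Z^k)$ with morphism $p_i \oplus 0$.

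\textbf{Step 2.} Apply Lemma \ref{lem:ru-equiv} to $x'$ with the automorphism $\Psi' = \Psi \oplus \id_{\Z^{2k}} \in \RU(\UM \oplus \UH_{2k}, L')$. This gives
\[
x' \sim (\UM \oplus \UH_{2k}; L', \Psi(V_0) \oplus (\{0\}\times\Z^k), \Psi(V_1) \oplus (\{0\}\times\Z^k), (p_0 \circ \Psi^{-1}) \oplus 0, (p_1 \circ \Psi^{-1}) \oplus 0).
\]

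\textbf{Step 3.} The right-hand side of Step 2 is exactly $y \oplus (\UH_{2k}; \{0\}\times\Z^k, \{0\}\times\Z^k, \{0\}\times\Z^k, 0, 0)$, where $y = (\UM; L, \Psi(V_0), \Psi(V_1), p_0 \circ \Psi^{-1}, p_1 \circ \Psi^{-1})$. Applying Lemma \ref{lem:hyp-zero} to $y$ shows this is equivalent to $y$. Before doing so I need to check that $y \in \gc_{2q+1}(Q,v,\UP_0,\UP_1)$. This holds because $\Psi$ is an automorphism of $\UM$: it preserves being a free half-rank summand, the orthogonality relation $V_i^{\perp} = V_{1-i}$, and the restricted forms, so each $p_i \circ \Psi^{-1}$ is still a morphism to $\UP_0$ or $-\UPs_1$ respectively.

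\textbf{Main obstacle.} The key issue is the precise definition of $\RU_{\st}$ in \cite{csn-qfc26}. If it is instead defined as a subgroup generated by elements that become elements of $\RU$ after stabilisation, the argument above gives the equivalence for each generator, and composing the equivalences handles products. If the stabilising summand in that definition is a general free metabolic form $(\UN, K)$ rather than $\UH_{2k}$, I would use Lemma \ref{lem:nkk-zero} in place of Lemma \ref{lem:hyp-zero} in Steps 1 and 3.
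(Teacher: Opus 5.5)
Your proposal is correct and takes the same route as the paper. The paper only says the proof follows the one-sided \cite[Lemma 4.25]{csn-qfc26}, and your argument is exactly that adaptation: stabilise via Lemma \ref{lem:hyp-zero}, apply Lemma \ref{lem:ru-equiv} to $\Psi \oplus \id$, then destabilise, carrying $V_1$ along and replacing each $p_i$ by $(p_i \circ \Psi^{-1}) \oplus 0$.
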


\begin{proof}
Follows the proof of \cite[Lemma 4.25]{csn-qfc26}.
\end{proof}

\begin{thm} \label{thm:jacobi}
Suppose that $(\UM; L, V_0, V_1, p_0, p_1) \in \gc_{2q+1}(Q,v,\UP_0,\UP_1)$, and let $K$ be a lagrangian in $\UM$ (so $(\UM; K, L, L, 0, 0), (\UM; K, V_0, V_1, p_0, p_1) \in \gc_{2q+1}(Q,v,\UP_0,\UP_1)$). Then 
\[
[\UM; K, L, L, 0, 0] \oplus [\UM; L, V_0, V_1, p_0, p_1] = [\UM; K, V_0, V_1, p_0, p_1] \in \ell_{2q+1}(Q,v,\UP_0,\UP_1) \text{\,.}
\]
\end{thm}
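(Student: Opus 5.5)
We adapt the proof of \cite[Theorem 4.26]{csn-qfc26} to the two-sided setting. The left-hand side is represented by the direct sum
\[
(\UM \oplus \UM; K \oplus L, L \oplus V_0, L \oplus V_1, 0 \oplus p_0, 0 \oplus p_1),
\]
which is well defined because the first summand is a two-sided formation. The right-hand side, by Lemma \ref{lem:nkk-zero}, is equivalent to $(\UM; K, V_0, V_1, p_0, p_1) \oplus (\UM; L, L, L, 0, 0)$, that is, to
\[
(\UM \oplus \UM; K \oplus L, V_0 \oplus L, V_1 \oplus L, p_0 \oplus 0, p_1 \oplus 0).
\]
Both elements have the same form $\UM \oplus \UM$ and the same lagrangian $K \oplus L$. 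It therefore suffices to find a single automorphism $\Psi \in \RU_{\st}(\UM \oplus \UM, K \oplus L)$ with the following properties: $\Psi(L \oplus V_i) = V_i \oplus L$ for both $i=0,1$ simultaneously, and the morphisms transform correctly, i.e.\ $(p_i \oplus 0) \circ \Psi = 0 \oplus p_i$ on $L \oplus V_i$. Lemma \ref{lem:ru-st-equiv} then gives the desired equivalence.

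The natural candidate for $\Psi$ is the swap $\tau(x,y) = (y,x)$ of the two summands, possibly composed with a sign on one factor; after this step one recovers the ordering used in the one-sided proof. The swap satisfies $\tau(L \oplus V_i) = V_i \oplus L$ for both $i$ at once. It also satisfies $(p_i \oplus 0)\circ\tau = 0 \oplus p_i$ tautologically, so the two-sided data causes no extra difficulty. What is not automatic is that $\tau$ lies in $\RU_{\st}(\UM \oplus \UM, K \oplus L)$, because $\tau(K \oplus L) = L \oplus K \neq K \oplus L$.

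In \cite{csn-qfc26} this was handled by a specific stable automorphism. Following it, I would write the one-sided automorphism used there as a product of elements fixing the lagrangian and elementary ``$\sigma$-moves'' in hyperbolic summands, as in the generators considered in Lemma \ref{lem:ru-equiv}. I would then check that the same automorphism, being defined purely in terms of $\UM \oplus \UM$, $K$ and $L$ (and independent of $V$), carries $L \oplus V$ to $V \oplus L$ for every $V$. It therefore applies to $V_0$ and $V_1$ simultaneously. This independence from $V$ is the key point: since the automorphism does not depend on $V$, one $\Psi$ serves both sides.

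The main obstacle is precisely verifying that the morphisms $p_i$ are carried along correctly. In the one-sided proof the automorphism might only map $L \oplus V$ onto $V \oplus L$ as subgroups, not by the swap restricted to it. It could act on $L \oplus V$ by something like $(l,v) \mapsto (v + \text{something in } K\text{-direction}, l)$. I would compute its restriction to $L \oplus V_i$ explicitly and check that it differs from $\tau$ by a term that lies in the first summand $\ker$ of $p_i \oplus 0$. If this fails, I would fall back on the route through Proposition \ref{prop:lmonoid-zero} c): first establish the identity for the special case $V_0 = V_1 = L$, and then use Lemma \ref{lem:ru-equiv} with an $\RU$-element built from $K$ and $L$ only.
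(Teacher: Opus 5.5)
\textbf{There is a genuine gap: the one step that carries the content is not established.} Your reduction is sound as far as it goes. Both sides can be represented on $\UM\oplus\UM$ with lagrangian $K\oplus L$. If the swap $\tau(x,y)=(y,x)$ lay in $\RU_{\st}(\UM\oplus\UM,K\oplus L)$, then Lemma~\ref{lem:ru-st-equiv} followed by Lemma~\ref{lem:nkk-zero} would finish the proof, with the $p_i$ transported tautologically. But that membership is where all of the content lies. It is a Whitehead-lemma-type fact that needs real input, and your proposal does not establish it.

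Your plan to read it off from the one-sided argument rests on an inaccurate guess about that argument. The proof being adapted never uses an automorphism sending $L\oplus V$ to $V\oplus L$ while working with the lagrangian $K\oplus L$. As a result, the obstacle you single out (transport of the $p_i$) is not the real one. Your fallback is also empty. The case $V_0=V_1=L$ is just Proposition~\ref{prop:lmonoid-zero}~c), and an automorphism depending only on $K$ and $L$ cannot convert a statement with $V_i=L$, $p_i=0$ into one about arbitrary $V_i$ and $p_i$.

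\textbf{The missing ingredients} are two results of \cite{csn-qfc26}:
\begin{itemize}
\item By \cite[Theorem 2.27 a)]{csn-qfc26} there is an automorphism $\Phi$ of $\UtM=\UM\oplus\UH_{2k}$ with $\Phi(\tilde{L})=\tilde{K}$, where $\tilde{L}=L\oplus(\{0\}\times\Z^k)$ and $\tilde{K}=K\oplus(\{0\}\times\Z^k)$.
\item By \cite[Theorem 3.7]{csn-qfc26}, $\Phi\oplus\Phi^{-1}\in\RU_{\st}(\UtM\oplus\UtM,\tilde{K}\oplus\tilde{K})$.
\end{itemize}

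\textbf{How the paper uses them.} After stabilising (via $\HH_{2k}$ and Lemma~\ref{lem:hyp-zero}), the paper applies $\Phi$ as an isomorphism to the second summand, so that both summands have lagrangian $\tilde{K}$. Then $\Phi\oplus\Phi^{-1}$ carries $\tilde{L}\oplus\Phi(\tilde{V}_i)$ to $\tilde{K}\oplus\tilde{V}_i$, and carries $0\oplus\tilde{p}_i\circ\Phi^{-1}$ back to $0\oplus\tilde{p}_i$. Lemma~\ref{lem:nkk-zero} then removes the summand $(\UtM;\tilde{K},\tilde{K},\tilde{K},0,0)$.

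\textbf{Your swap route can be completed with the same inputs.} One has
\[
(\id\oplus\Phi)\circ\tau\circ(\id\oplus\Phi)^{-1}=\tau\circ(\Phi\oplus\Phi^{-1}).
\]
The right-hand side lies in $\RU_{\st}(\UtM\oplus\UtM,\tilde{K}\oplus\tilde{K})$, because $\tau$ preserves $\tilde{K}\oplus\tilde{K}$. Since $\id\oplus\Phi$ carries $\tilde{K}\oplus\tilde{L}$ to $\tilde{K}\oplus\tilde{K}$, naturality of $\RU_{\st}$ under isomorphisms gives $\tau\in\RU_{\st}(\UtM\oplus\UtM,\tilde{K}\oplus\tilde{L})$. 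You would then run your argument on the stabilised representatives. Without these two results from \cite{csn-qfc26}, the proposal is incomplete.
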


\begin{proof}
By \cite[Theorem 2.27 a)]{csn-qfc26} there is an isomorphism $\Phi : \UM \oplus \UH_{2k} \rightarrow \UM \oplus \UH_{2k}$ such that $\Phi(L \oplus (\{ 0 \} \times \Z^k)) = K \oplus (\{ 0 \} \times \Z^k)$ for some $k \geq 0$. Let $\UtM = \UM \oplus \UH_{2k}$, $\tilde{L} = L \oplus (\{ 0 \} \times \Z^k)$, $\tilde{K} = K \oplus (\{ 0 \} \times \Z^k)$, $\tilde{V}_i = V_i \oplus (\Z^k \times \{ 0 \})$ and $\tilde{p}_i = p_i \oplus 0$, then $[\UM; L, V_0, V_1, p_0, p_1] = [\UtM; \tilde{L}, \tilde{V}_0, \tilde{V}_1, \tilde{p}_0, \tilde{p}_1]$, $[\UM; K, V_0, V_1, p_0, p_1] = [\UtM; \tilde{K}, \tilde{V}_0, \tilde{V}_1, \tilde{p}_0, \tilde{p}_1]$ and (by Lemma \ref{lem:hyp-zero}) $[\UM; K, L, L, 0, 0] = [\UtM; \tilde{K}, \tilde{L}, \tilde{L}, 0, 0]$.

By \cite[Theorem 3.7]{csn-qfc26} $\Phi \oplus \Phi^{-1} \in \RU_{\st}(\UtM \oplus \UtM, \tilde{K} \oplus \tilde{K})$. Therefore we have 
\[
\begin{aligned}
 &(\UtM; \tilde{K}, \tilde{L}, \tilde{L}, 0, 0) \oplus (\UtM; \tilde{L}, \tilde{V}_0, \tilde{V}_1, \tilde{p}_0, \tilde{p}_1) \cong \\ 
 &\cong (\UtM; \tilde{K}, \tilde{L}, \tilde{L}, 0, 0) \oplus (\UtM; \tilde{K}, \Phi(\tilde{V}_0), \Phi(\tilde{V}_1), \tilde{p}_0 \circ \Phi^{-1}, \tilde{p}_1 \circ \Phi^{-1}) = \\
 &= (\UtM \oplus \UtM; \tilde{K} \oplus \tilde{K}, \tilde{L} \oplus \Phi(\tilde{V}_0), \tilde{L} \oplus \Phi(\tilde{V}_1), 0 \oplus \tilde{p}_0 \circ \Phi^{-1}, 0 \oplus \tilde{p}_1 \circ \Phi^{-1}) \sim \\
 &\sim (\UtM \oplus \UtM; \tilde{K} \oplus \tilde{K}, \tilde{K} \oplus \tilde{V}_0, \tilde{K} \oplus \tilde{V}_1, 0 \oplus \tilde{p}_0, 0 \oplus \tilde{p}_1) = \\
 &= (\UtM; \tilde{K}, \tilde{K}, \tilde{K}, 0, 0) \oplus (\UtM; \tilde{K}, \tilde{V}_0, \tilde{V}_1, \tilde{p}_0, \tilde{p}_1) \sim \\ 
 &\sim (\UtM; \tilde{K}, \tilde{V}_0, \tilde{V}_1, \tilde{p}_0, \tilde{p}_1) 
\end{aligned}
\]
using the isomorphism $\Phi$ and Lemmas \ref{lem:ru-st-equiv} and \ref{lem:nkk-zero}. 
\end{proof}

\subsection{The group $L_{2q+1}(Q,v,\UP_0,\UP_1)$}

Note that if a two-sided quasi-formation is equivalent to a two-sided formation, then in it is a two-sided formation itself.

\begin{defin}
For $(Q,v,\UP_0,\UP_1)$ and an even integer $q \geq 0$ let
\[
L_{2q+1}(Q,v,\UP_0,\UP_1) \subseteq \ell_{2q+1}(Q,v,\UP_0,\UP_1) 
\]
be the set of equivalence classes of two-sided formations in $\gc_{2q+1}(Q,v,\UP_0,\UP_1)$.
\end{defin}

\begin{prop} \label{prop:L-action}
$L_{2q+1}(Q,v,\UP_0,\UP_1)$ is an abelian group under direct sum. It acts on the set $\ell_{2q+1}(Q,v,\UP_0,\UP_1)$ via direct sum.
\end{prop}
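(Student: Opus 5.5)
The plan is to reduce everything to the one-sided case, \cite[Section 4.3]{csn-qfc26}, by observing that two-sided formations carry no extra data. A two-sided formation has the form $(\UM; L, K, K, 0, 0)$, with $\UM$ free and $L, K$ lagrangians. The direct sum of two such formations is again one, so direct sum gives a binary operation on the set of two-sided formations in $\gc_{2q+1}(Q,v,\UP_0,\UP_1)$. The sum of a two-sided quasi-formation with a two-sided formation is defined by the earlier definition, so we also get an operation of two-sided formations on all of $\gc_{2q+1}(Q,v,\UP_0,\UP_1)$.

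\textbf{Step 1: the operations descend to $\sim$-classes.} For stable isomorphism this is immediate. The sum with $\HH_{2k}$ commutes with direct sum up to reordering the summands, and isomorphisms add. For an elementary equivalence of the second type, in either summand, one observes that $(\UM \oplus \UH_2 \oplus \UMp; L \oplus K_j \oplus L', \ldots)$ is, after permuting summands, again an elementary move of the second type. This applies both to the $L$-component of the quasi-formation and to the $L$-component of the formation. The $V_i$ and $p_i$ components are untouched throughout. Hence $[x] \oplus [y]$ is well defined for $x$ arbitrary and $y$ a two-sided formation, and in particular on $L_{2q+1}(Q,v,\UP_0,\UP_1)$.

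\textbf{Step 2: group axioms.} Associativity and commutativity of the sum of two formations hold up to the obvious isomorphisms that permute summands. For formations both $V$'s equal $K$ and both $p$'s are zero, so these permutations are isomorphisms of two-sided formations. By Proposition \ref{prop:lmonoid-zero} b), all classes $[\UN; K, K, K, 0, 0]$ coincide. Call this common class $0$. By Proposition \ref{prop:lmonoid-zero} c), or Lemma \ref{lem:nkk-zero}, it is a neutral element both in $L_{2q+1}$ and for the action. The same statements give the action axiom $([x] \oplus [y]) \oplus [z] = [x] \oplus ([y] \oplus [z])$, again from associativity of direct sum up to isomorphism.

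\textbf{Step 3: inverses.} This is the only real obstacle. Given a formation $(\UM; L, K, K, 0, 0)$, I propose the inverse $(-\UM; K, L, L, 0, 0)$. Here $-\UM$ means $\lambda$ negated and $\mu$ unchanged; this is still geometric, since $v\circ\mu$ only sees $\lambda(x,x)$ mod $2$. To show that the sum is $0$, I would apply Theorem \ref{thm:jacobi} on $\UM \oplus -\UM$. First, $(\UM; L, K, K) \oplus (-\UM; K, L, L)$ is equivalent, via the second-type equivalence with the diagonal replacing $L \oplus K$, to the following: $[\UM\oplus-\UM; \Delta, K\oplus L, K\oplus L, 0,0] \oplus [\,\cdot\,; L\oplus K, \Delta, \Delta,0,0]$, using Theorem \ref{thm:jacobi}, where $\Delta$ is the diagonal lagrangian. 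Each of these terms is of the form $[\UN; A, B, B, 0, 0]$ with $B$ a lagrangian isomorphic to $A$ via the swap. Alternatively, and more directly, I would mimic the proof in \cite[Proposition 4.28 or so]{csn-qfc26}: forgetful maps identify everything with the one-sided $L$-group. Concretely, the forgetful map $(\UM; L, K, K, 0,0) \mapsto (\UM; L, K)$ is a bijection on formations. It preserves sums, and the generating equivalences correspond bijectively, because the extra data is forced to be $(K, K, 0, 0)$. Hence $L_{2q+1}(Q,v,\UP_0,\UP_1) \cong L_{2q+1}(Q,v)$ as monoids. The latter is a group by \cite{csn-qfc26}, which yields inverses. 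The delicate point is checking that equivalence of formations inside the two-sided world matches equivalence in the one-sided world. Every intermediate object in a chain of elementary equivalences starting from a formation is again a formation, by the remark preceding the definition. Hence the chains correspond, and I expect this verification to be the main thing to write carefully.
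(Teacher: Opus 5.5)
Your Steps 1 and 2 agree with the paper. The first suggestion in Step 3 is wrong and should be dropped.

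\textbf{The proposed inverse.} Swapping $L$ and $K$ already inverts the class. Theorem \ref{thm:jacobi} with $(V_0,V_1,p_0,p_1)=(K,K,0,0)$ gives $[\UM;K,L,L,0,0]\oplus[\UM;L,K,K,0,0]=[\UM;K,K,K,0,0]$, which is the neutral element by Proposition \ref{prop:lmonoid-zero} b). This one line is the paper's whole argument for inverses.

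Negating $\lambda$ also inverts the class. Work in $\UM\oplus(M,-\lambda,-\mu)$; this is isomorphic to your $\UM\oplus-\UM$ via $-\id_M$ on the second summand, which fixes every subgroup. There the diagonal $\Delta$ is a lagrangian. Both $(\UM\oplus-\UM;F\oplus F,\Delta,\Delta,0,0)$ and $(\UM\oplus-\UM;\Delta,G\oplus G,G\oplus G,0,0)$ split as a trivial summand $(\UM;F,F,F,0,0)$ (respectively with $G$) plus a hyperbolic form in which the two lagrangians are complementary. Hence both vanish, and Theorem \ref{thm:jacobi} through $\Delta$ gives $[\UM;F,G,G,0,0]\oplus[-\UM;F,G,G,0,0]=0$.

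Doing both operations therefore returns the original class. Your candidate $[-\UM;K,L,L,0,0]$ equals $[\UM;L,K,K,0,0]$, and your sum is $2[\UM;L,K,K,0,0]$. So it is an inverse only for classes of order at most $2$.

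The supporting argument also fails on its own terms:
\begin{compactitem}
\item With your convention ($\mu$ unchanged on $-\UM$), the diagonal is not a lagrangian, since $\mu(m,m)=2\mu(m)$.
\item The closing remark that each term is $[\UN;A,B,B,0,0]$ with $B\cong A$ via the swap proves nothing. Any two lagrangians are abstractly isomorphic, and the swap is not an automorphism of $\UN$.
\end{compactitem}

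\textbf{The alternative route.} Your second route is correct and genuinely different from the paper's. The key points all hold:
\begin{compactitem}
\item Every elementary move starting at a formation stays among formations.
\item Each move lifts uniquely with $V_0=V_1=K$ and $p_0=p_1=0$.
\item Direct sums correspond under the forgetful map.
\end{compactitem}
So the forgetful map identifies $L_{2q+1}(Q,v,\UP_0,\UP_1)$ with $L_{2q+1}(Q,v)$ as monoids, and you import the group structure from \cite{csn-qfc26}.

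In effect you prove Proposition \ref{prop:L-forget} a) first and deduce the proposition from it. The paper instead stays inside the two-sided theory and gets the explicit inverse $[\UM;K,L,L,0,0]$ directly from Theorem \ref{thm:jacobi}, which you already had. Your route yields the identification with the one-sided group at the same time. However, it depends on the group structure established in the earlier paper, which itself rests on the one-sided Jacobi identity. The paper's route is shorter and self-contained.
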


\begin{proof}
By Proposition \ref{prop:lmonoid-zero} the equivalence class of two-sided formations of the form $(\UM; L, L, L, 0, 0)$ is a neutral element for direct sum, and by Theorem \ref{thm:jacobi} every element is invertible, namely the inverse of $[\UM; L, K, K, 0, 0]$ is $[\UM; K, L, L, 0, 0]$. All other required properties follow immediately from the definitions. 
\end{proof}

\begin{prop} \label{prop:L-forget}
a) The forgetful map restricts to an isomorphism $L_{2q+1}(Q,v,\UP_0,\UP_1) \rightarrow L_{2q+1}(Q,v)$. 

b) If $Q$ is free, then $L_{2q+1}(Q,v,\UP_0,\UP_1) \cong 0$.
\end{prop}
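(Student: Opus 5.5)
For part a) I would first check that the forgetful map $(\UM; L, V_0, V_1, p_0, p_1) \mapsto (\UM; L, V_0)$ sends two-sided formations to formations and respects the equivalence relations. Both generating equivalences (stable isomorphism, and the switch $\{0\}\times\Z \leftrightarrow \Z\times\{0\}$ in an $\UH_2$ summand) map to the corresponding generating equivalences in $\gc_{2q+1}(Q,v)$. So the map $L_{2q+1}(Q,v,\UP_0,\UP_1) \rightarrow L_{2q+1}(Q,v)$ is well defined. It visibly commutes with direct sums, so it is a homomorphism. It is surjective, because a formation $(\UM; L, K)$ is the image of the two-sided formation $(\UM; L, K, K, 0, 0)$, and the latter is a valid two-sided quasi-formation: $K^\perp = K$ since $K$ is a lagrangian in a free form, and the zero maps are morphisms.

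Injectivity is the main point. A two-sided formation is determined by its image, since $V_1 = V_0 = K$ and $p_0 = p_1 = 0$ are forced. So it suffices to show that if $(\UM; L, K) \sim (\UMp; L', K')$ in $\gc_{2q+1}(Q,v)$ via a chain of elementary equivalences, then the chain lifts. The intermediate terms of the chain are formations, because equivalence preserves being a formation. Each of these lifts uniquely to the two-sided formation $(\cdot; \cdot, K, K, 0, 0)$, and each elementary equivalence lifts as follows.
\begin{compactitem}
\item A stable isomorphism $h$ satisfies $h(K) = K'$ and $0 = 0 \circ h$, so it is a stable isomorphism of the two-sided lifts.
\item A lagrangian switch leaves $V = K$ unchanged, so it lifts verbatim.
\end{compactitem}
Hence equivalent images have equivalent preimages, and the map is a bijective homomorphism.

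I expect the subtle point to be exactly this: the chain realising the equivalence in $L_{2q+1}(Q,v)$ might a priori pass through non-formations. The remark before the definition of $L_{2q+1}(Q,v,\UP_0,\UP_1)$ (and its one-sided analogue in \cite{csn-qfc26}) rules this out. Its justification is that freeness of $\UM$ and $V$ being a lagrangian are preserved under both elementary moves.

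For part b) I would combine a) with the vanishing of the extended L-group $L_{2q+1}(Q,v)$ for free $Q$ established in \cite[Section 4.3]{csn-qfc26}. The isomorphism of a) then gives $L_{2q+1}(Q,v,\UP_0,\UP_1) \cong 0$.
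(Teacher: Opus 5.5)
Your proof is correct and takes essentially the same approach as the paper. The paper simply states that the inverse is $[\UM; L, K] \mapsto [\UM; L, K, K, 0, 0]$ and cites \cite[Theorem 4.33]{csn-qfc26} for b); you additionally check that this inverse is well defined, by lifting chains of elementary equivalences through formations.
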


\begin{proof}
a) The inverse is given by $[\UM; L, K] \mapsto [\UM; L, K, K, 0, 0]$.

b) By \cite[Theorem 4.33]{csn-qfc26}, $L_{2q+1}(Q,v) \cong 0$.
\end{proof}

\subsection{The two-sided $\ell$-set associated to a pair of normal maps}

Recall from \cite[Definition 4.34]{csn-qfc26} that $\hat{v}_q(\xi) : H_q(B) \rightarrow \Z_2$ denotes the homomorphism determined by the Wu class $v_q(\xi) \in H^q(B;\Z_2)$ of a stable bundle $\xi$ over a space $B$.

\begin{defin} \label{def:l-space}
For a simply-connected space $B$, a stable bundle $\xi$ over it, an even integer $q \geq 2$, closed oriented $2q$-manifolds $M_0$, $M_1$, and normal maps $f_i : M_i \rightarrow B$ let 
\[
\begin{aligned}
\gc_{2q+1}(B,\xi,f_0,f_1) &= \gc_{2q+1}(H_q(B),\hat{v}_q(\xi),E_q(M_0,f_0),E_q(M_1,f_1)) \\
\gs_{2q+1}(B,\xi,f_0,f_1) &= \gs_{2q+1}(H_q(B),\hat{v}_q(\xi),E_q(M_0,f_0),E_q(M_1,f_1)) \\
\s_{2q+1}(B,\xi,f_0,f_1) &= \s_{2q+1}(H_q(B),\hat{v}_q(\xi),E_q(M_0,f_0),E_q(M_1,f_1)) \\
\ell_{2q+1}(B,\xi,f_0,f_1) &= \ell_{2q+1}(H_q(B),\hat{v}_q(\xi),E_q(M_0,f_0),E_q(M_1,f_1)) \\
L_{2q+1}(B,\xi,f_0,f_1) &= L_{2q+1}(H_q(B),\hat{v}_q(\xi),E_q(M_0,f_0),E_q(M_1,f_1)) 
\end{aligned}
\]
and 
\[
\begin{aligned}
\gc^T_{2q+1}(B,\xi,f_0,f_1) &= \gc^T_{2q+1}(H_q(B),\hat{v}_q(\xi),E_q(M_0,f_0),E_q(M_1,f_1),\Tor H_{q-1}(B)) \\
\gs^T_{2q+1}(B,\xi,f_0,f_1) &= \gs^T_{2q+1}(H_q(B),\hat{v}_q(\xi),E_q(M_0,f_0),E_q(M_1,f_1),\Tor H_{q-1}(B)) \\
\s_{2q+1}^T(B,\xi,f_0,f_1) &= \s^T_{2q+1}(H_q(B),\hat{v}_q(\xi),E_q(M_0,f_0),E_q(M_1,f_1),\Tor H_{q-1}(B)) \\
\ell^T_{2q+1}(B,\xi,f_0,f_1) &= \ell^T_{2q+1}(H_q(B),\hat{v}_q(\xi),E_q(M_0,f_0),E_q(M_1,f_1),\Tor H_{q-1}(B)) 
\end{aligned}
\]
\end{defin}

\section{The two-sided extended surgery obstruction $\theta^{f_0,f_1}_{W,F}$} \label{s:obstr-def}

Let $B$ be a simply-connected space, $\xi$ a stable vector bundle over $B$ and $q \geq 2$ an even integer. Let $M_i$ be a closed oriented $2q$-manifold and let $f_i : M_i \rightarrow B$ be a normal $(q{-}1)$-smoothing over $(B,\xi)$, for $i=0,1$. Note that $f_i$ is covered by a bundle map $\nu_{M_i} \rightarrow \xi$, which will not be included in the notation. Let $F : W \rightarrow B$ be a normal bordism between $f_0$ and $f_1$. Assume that $F$ is also a normal $(q{-}1)$-smoothing, and that $\chi(M_0)=\chi(M_1)$. 

We will construct the two-sided extended surgery obstruction $\theta^{f_0,f_1}_{W,F} \in \s^T_{2q+1}(B,\xi,f_0,f_1)$ analogously to $\theta_{W,F}$. Given a CW-decomposition $c$ of $W$ and a regular neighbourhood $U \subset \interior W$ of an embedding $\sk_q W \rightarrow \interior W$, we will define 
\[
\begin{aligned}
\theta^{f_0,f_1}_{U,F} &\in \gc^T_{2q+1}(B,\xi,f_0,f_1) \\
\theta^{f_0,f_1}_{c,F} &\in \gs^T_{2q+1}(B,\xi,f_0,f_1) \\
\theta^{f_0,f_1}_{W,F} &\in \s_{2q+1}^T(B,\xi,f_0,f_1) \\
[\theta^{f_0,f_1}_{W,F}] &\in \ell^T_{2q+1}(B,\xi,f_0,f_1)
\end{aligned}
\]
Here $\theta^{f_0,f_1}_{c,F}$ is the isomorphism class of $\theta^{f_0,f_1}_{U,F}$, and it depends only on the choice of $c$, and its stable isomorphism class is $\theta^{f_0,f_1}_{W,F}$, which is independent of choices. The equivalence class $[\theta^{f_0,f_1}_{W,F}]$ only depends on the normal bordism class of $F$. 

There are two new results that are specific to the two-sided version. First, we show that $\theta^{f_0,f_1}_{U,F}$ is proper (and hence the same is true for any two-sided quasi-formation equivalent to it). Second, when $W$ is an h-cobordism, equivalently, $\theta^{f_0,f_1}_{W,F}$ is elementary, the isomorphisms $E_q(M_0,f_0) \rightarrow E_q(M_1,f_1)$ induced by $W$ and $\theta^{f_0,f_1}_{W,F}$ coincide (see Definitions \ref{def:iw} and \ref{def:ix}). This is what allows us to study the realisability of isomorphisms using the two-sided obstruction.

\subsection{Definitions, bordism invariance and realisation of stable isomorphism classes}

\begin{defin}
Let $c$ be a CW-decomposition of $W$ and $U \subset \interior W$ be a regular neighbourhood of an embedding $\sk_q W \rightarrow \interior W$ that is isotopic to the inclusion. We define $\theta^{f_0,f_1}_{U,F} = (\UM; L, V_0, V_1, p_0, p_1) \in \gc^T_{2q+1}(B,\xi,f_0,f_1)$ as follows. 

The obstruction $\theta_{U,F} = (\UM; L, V) \in \gc^T_{2q+1}(B,\xi)$ was defined in \cite[Section 6.1]{csn-qfc26}, and we take the same $\UM = (M, \lambda, \mu)$, $L$ and $V_0 = V$. Let $p_0 : \UV_0 \rightarrow E_q(M_0,f_0)$ be the surjective morphism defined in \cite[Lemma 7.3]{csn-qfc26}. As shown in the proof of \cite[Theorem 7.6]{csn-qfc26}, the same constructions applied to $-W$ (that is, $W$ with reversed orientation) yield another free half-rank direct summand $V_1 = V'$ with a surjective morphism $p_1 : \UV_1 \rightarrow -E_q(M_1,f_1)^*$. 
\end{defin}

The proof of \cite[Theorem 7.6]{csn-qfc26} also shows that $\lambda(x,y)=0$ for every $x \in V_0$, $y \in V_1$, equivalently, $V_{1-i} \leq V_i^{\perp}$. Since $V_i \leq M$ is a half-rank subgroup and $\lambda$ is nonsingular, $V_i^{\perp} \leq M$ is a half-rank direct summand containing $\Tor M$. It follows that $V_i^{\perp} = V_{1-i} \oplus \Tor M$, so $(\UM; L, V_0, V_1, p_0, p_1)$ satisfies all of the conditions in Definition \ref{def:2sqf}.

\begin{prop} \label{prop:obstr-proper}
The two-sided quasi-formation $\theta^{f_0,f_1}_{U,F}$ is proper.
\end{prop}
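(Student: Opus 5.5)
Surjectivity of $p_0$ and $p_1$ is already contained in the construction: $p_0 : \UV_0 \rightarrow E_q(M_0,f_0)$ is the surjective morphism of \cite[Lemma 7.3]{csn-qfc26}, and $p_1$ is the analogous surjection obtained by applying the same lemma to $-W$. What remains to prove is $\Ker p_0 = \Ker p_1 \leq V_0 \cap V_1$. I plan to identify both kernels with one geometrically defined subgroup of $M$.

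First I will recall the geometric meaning of the data from \cite[Section 6.1]{csn-qfc26} and \cite[Lemma 7.3, Theorem 7.6]{csn-qfc26}. Here $U$ is a regular neighbourhood of $\sk_q W$, and $M$ is essentially $H_q(\partial U) \cong H_{q+1}(U,\partial U)$, modulo the identifications used there. The summand $V_0$ is the image of $H_{q+1}(W \setminus \interior U, \partial U \cup M_0)$-type classes, or equivalently the kernel-type subgroup coming from the side of $M_0$. The morphism $p_0$ is defined by pushing classes in $\partial U$ into the complement $C = W \setminus \interior U$ and then to $M_0$, using a homology isomorphism in degree $q$ between $C$ relative to $M_1$ (or its analogue) and $M_0$. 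For $-W$ the roles of $M_0$ and $M_1$ are interchanged, which gives $V_1$ and $p_1$. I expect both kernels to be describable in terms of the cobordism $C$ between $\partial U \sqcup M_0$ and $M_1$.

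The key step is to write the kernel of $p_i$ as the image of a map from the homology of a subspace in which both ends participate. Concretely, I expect that $x \in V_0$ satisfies $p_0(x)=0$ exactly when $x$ lies in the image of $H_{q+1}(C, \partial U)$ or $H_{q+1}(C)$ under the boundary map to $H_q(\partial U)$, that is, when $x$ bounds in the complement without reference to either end. This description is symmetric in $M_0$ and $M_1$. Applying the same description to $-W$ (reversing orientation changes only signs) will give $\Ker p_0 = \Ker p_1 = N$ for this common subgroup $N$.

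Finally, since $\Ker p_i \leq V_i$ for each $i$, the equality $\Ker p_0 = \Ker p_1$ immediately gives $\Ker p_0 \leq V_0 \cap V_1$. The main obstacle is the key step: extracting from \cite{csn-qfc26} an explicit exact-sequence description of $p_0$ fine enough to identify its kernel with the symmetric subgroup $N$. To do this I would first write down the long exact sequences of the triple $(C, \partial U \sqcup M_0 \sqcup M_1, \ldots)$, and then use the $q$-connectivity of $F$ and of the $f_i$, which gives the vanishing of the relevant groups $H_q(C, M_i)$ in low degrees.
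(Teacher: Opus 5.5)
Your approach is the paper's: it likewise shows that both $\Ker p_0$ and $\Ker p_1$ equal the image of the boundary map $\partial : H_{q+1}(W \setminus \interior U, \partial U) \rightarrow H_q(\partial U)$, which then lies in $V_0 \cap V_1$; this is the right source group, and there is no such boundary map out of $H_{q+1}(C)$. The step you flag as the main obstacle is a one-line chase once you use the facts from \cite{csn-qfc26} that $d'_i : H_{q+1}(W \setminus \interior U, \partial U \sqcup M_i) \rightarrow H_q(\partial U)$ is injective with image $V_i$ and that $p_i = d_i \circ (d'_i)^{-1}$, where $d'_i$ and $d_i$ are the $\partial U$- and $M_i$-components of the boundary: exactness of the triple $(W \setminus \interior U, \partial U \sqcup M_i, \partial U)$ shows that $\Ker d_i$ is the image of $H_{q+1}(W \setminus \interior U, \partial U)$, and $d'_i$ composed with that map is $\partial$, so $\Ker p_i = d'_i(\Ker d_i) = \Image \partial$, with no connectivity input and no triple involving both $M_0$ and $M_1$ needed.
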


\begin{proof}
The map $p_i$ is surjective by \cite[Lemma 7.3]{csn-qfc26}. 

We will prove that $\Ker p_0 = \Ker p_1 \leq V_0 \cap V_1$ by showing that $\Ker p_i$ is the image of the boundary map $\partial : H_{q+1}(W \setminus \interior U, \partial U) \rightarrow H_q(\partial U)$ for both $i$. Consider the commutative diagram 
\[
\xymatrix{
H_{q+1}(W \setminus \interior U, \partial U) \ar[r]^-{j_i} \ar[dr]_{\partial} & H_{q+1}(W \setminus \interior U, \partial U \sqcup M_i) \ar[r]^-{d_i} \ar[d]^{d'_i} & H_q(M_i) \\
 & H_q(\partial U) & 
}
\]
in which the horizontal sequence is exact. The map $d'_i$ is injective with image $V_i$, and $p_i = d_i \circ (d'_i)^{-1}$ (see \cite[Theorem 6.10, Definition 6.7 and Lemma 7.3]{csn-qfc26}). Hence $\Ker p_i = d'_i(\Ker d_i) = d'_i(\Image j_i) = \Image(d'_i \circ j_i) = \Image \partial$, as required. 
\end{proof}

\begin{rem}
In fact $\Ker p_i = V_0 \cap V_1$, but we will not use this.
\end{rem}

In the following we consider the analogues of the results in \cite[Sections 6.2--6.5]{csn-qfc26}. All of the proofs can be adapted to the two-sided setting, here we will only describe the additional steps required to take care of the maps $p_i$.

\begin{defin}
Suppose that $c$ is a CW-decomposition of $W$. Let $\theta^{f_0,f_1}_{c,F} \in \gs^T_{2q+1}(B,\xi,f_0,f_1)$ denote the isomorphism class of $\theta^{f_0,f_1}_{U,F}$ for any regular neighbourhood $U$ of any embedding $\sk_q W \rightarrow \interior W$ isotopic to the inclusion $\sk_q W \rightarrow W$.
\end{defin}

\begin{lem} \label{lem:cwhe-isom}
Let $c_1$ and $c_2$ be two CW-decompositions of $W$, and let $K_i \subset W$ denote the $q$-skeleton of $W$ with respect to $c_i$, for $i=1,2$. Let $U_i$ be a regular neighbourhood of an embedding $K_i \rightarrow \interior W$ isotopic to the inclusion $K_i \rightarrow W$. Suppose that there is a homotopy equivalence $j : K_1 \rightarrow K_2$ that commutes (up to homotopy) with the inclusions $K_i \rightarrow W$. Then $\theta^{f_0,f_1}_{U_1,F} \cong \theta^{f_0,f_1}_{U_2,F}$, via an isomorphism $I : H_q(\partial U_1) \rightarrow H_q(\partial U_2)$ that fits into the commutative diagram of \cite[Lemma 6.16]{csn-qfc26}.
\end{lem}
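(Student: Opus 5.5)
We take the isomorphism $I : H_q(\partial U_1) \rightarrow H_q(\partial U_2)$ supplied by \cite[Lemma 6.16]{csn-qfc26}. That lemma already shows that $I$ is an isomorphism of the one-sided quasi-formations $\theta_{U_1,F} \cong \theta_{U_2,F}$. In particular $I$ is an isomorphism $\UM_1 \rightarrow \UM_2$ of extended quadratic forms with $I(L_1) = L_2$ and $I(V^{(1)}_0) = V^{(2)}_0$. By definition of isomorphism of two-sided quasi-formations, two things remain to be checked: $I(V^{(1)}_1) = V^{(2)}_1$, and $p^{(2)}_i \circ I = p^{(1)}_i$ for $i = 0,1$.

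For the first condition we use that $V_1$ is obtained by the same construction applied to $-W$. The regular neighbourhoods $U_j$ and the homotopy equivalence $j$ are unchanged under reversing the orientation of $W$. The map produced by \cite[Lemma 6.16]{csn-qfc26} is induced geometrically, so it does not depend on the orientation. Hence applying that lemma to $-W$ gives the same $I$, and it carries $V^{(1)}_1 = V'_1$ to $V^{(2)}_1$.

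For the compatibility with the $p_i$ we recall, as in the proof of Proposition \ref{prop:obstr-proper}, that $p_i = d_i \circ (d'_i)^{-1}$. Here
\[
d'_i : H_{q+1}(W \setminus \interior U, \partial U \sqcup M_i) \rightarrow H_q(\partial U)
\]
and $d_i$ is the boundary map to $H_q(M_i)$. The commutative diagram of \cite[Lemma 6.16]{csn-qfc26} should contain, or extend to, a square relating $H_{q+1}(W \setminus \interior U_1, \partial U_1 \sqcup M_i)$ and $H_{q+1}(W \setminus \interior U_2, \partial U_2 \sqcup M_i)$. This square is compatible with $I$ via $d'_i$ and with the identity of $H_q(M_i)$ via $d_i$. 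Concretely, $I$ arises from an identification of the complements relative to $M_0 \sqcup M_1$, realised for instance by pushing $U_1$ into $U_2$ along the homotopy equivalence. That identification is the identity near $\partial W$, so it respects the boundary maps to $H_q(M_i)$. Then
\[
p^{(2)}_i \circ I = d^{(2)}_i \circ (d'^{(2)}_i)^{-1} \circ I = d^{(1)}_i \circ (d'^{(1)}_i)^{-1} = p^{(1)}_i .
\]

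The main obstacle is extracting this naturality from the one-sided lemma. If the diagram in \cite[Lemma 6.16]{csn-qfc26} only involves $\partial U_j$ and $H_q(W)$, we would go back to its construction: embed $U_1$ into the interior of $U_2$ via the homotopy equivalence, up to isotopy, so that $W \setminus \interior U_1 \supset W \setminus \interior U_2$. The cobordism $U_2 \setminus \interior U_1$ between $\partial U_1$ and $\partial U_2$ is then an h-cobordism-like region inducing $I$, and excision and naturality of long exact sequences of triples give the required square with $M_i$ fixed. The argument is the same for $i = 0$ and $i = 1$.
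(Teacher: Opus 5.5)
Your proposal is correct and takes essentially the same approach as the paper. The paper also goes back to the construction in \cite[Lemma 6.16]{csn-qfc26}: the isotopy-extension diffeomorphism $G$ is taken to be the identity on $\partial W = M_0 \sqcup M_1$, and $U_2'$ is compared with $U_2$ through the intermediate region $C$ via inclusions and excision, which gives the squares compatible with the boundary maps to $H_q(M_0)$, hence with $p_0$; $V_1$ and $p_1$ are then handled, as you suggest, by running the same diagrams for $-W$.
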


\begin{proof}
We define $U_2'$, $g$ and $G$ as in \cite[Lemma 6.16]{csn-qfc26}, and write $(\UM; L, V_0, V_1, p_0, p_1) = \theta^{f_0,f_1}_{U_1,F}$, $(\UMp; L', V'_0, V'_1, p'_0, p'_1) = \theta^{f_0,f_1}_{U_2',F}$ and $(\UMpp; L'', V''_0, V''_1, p''_0, p''_1) = \theta^{f_0,f_1}_{U_2,F}$. We will show that the isomorphisms $(\UM; L, V_0) \cong (\UMp; L', V'_0) \cong (\UMpp; L'', V''_0)$ defined in \cite[Lemma 6.16]{csn-qfc26} are also isomorphisms between $\theta^{f_0,f_1}_{U_1,F}$, $\theta^{f_0,f_1}_{U_2',F}$ and $\theta^{f_0,f_1}_{U_2,F}$.

The diffeomorphism $G$ (constructed via isotopy extension) can be assumed to be the identity on $\partial W = M_0 \sqcup M_1$. Thus it induces a commutative diagram
\[
\xymatrix{
H_q(M_0) \ar@{=}[d] & H_{q+1}(W \setminus \interior U_1, \partial U_1 \sqcup M_0) \ar[r] \ar[l] \ar[d]_{\cong} & H_q(\partial U_1) \ar[d]^{\cong} \\
H_q(M_0) & H_{q+1}(W \setminus \interior U_2', \partial U_2' \sqcup M_0) \ar[r] \ar[l] & H_q(\partial U_2') 
}
\]
Inclusions also induce a commutative diagram
\[
\xymatrix{
H_q(M_0) \ar@{=}[d] & H_{q+1}(W \setminus \interior U_2', \partial U_2' \sqcup M_0) \ar[r] \ar[l] \ar[d]_{\cong} & H_q(\partial U_2') \ar[d]^{\cong} \\
H_q(M_0) \ar@{=}[d] & H_{q+1}(W \setminus \interior U_2', C \sqcup M_0) \ar[r] \ar[l] & H_q(C) \\
H_q(M_0) & H_{q+1}(W \setminus \interior U_2, \partial U_2 \sqcup M_0) \ar[r] \ar[l] \ar[u]^{\cong} & H_q(\partial U_2) \ar[u]_{\cong} 
}
\]
It follows that the isomorphisms $(\UM; L, V_0) \cong (\UMp; L', V'_0) \cong (\UMpp; L'', V''_0)$ are also compatible with $p_0$, $p'_0$ and $p''_0$. Since $V_1$ and $p_1$ are defined in the same way as $V_0$ and $p_0$, but using $-W$ instead of $W$, the analogous diagrams show that they are also preserved. 
\end{proof}

\begin{defin}
Let $\theta^{f_0,f_1}_{W,F} \in \s^T_{2q+1}(B,\xi,f_0,f_1)$ be the stable isomorphism class of $\theta^{f_0,f_1}_{c,F}$ for any CW-decomposition $c$ of $W$. 
\end{defin}

\begin{thm} \label{thm:tw-welldef}
$\theta^{f_0,f_1}_{W,F}$ is well-defined. 
\end{thm}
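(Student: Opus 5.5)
The plan is to follow the proof of the corresponding well-definedness statement for $\theta_{W,F}$ in \cite[Section 6]{csn-qfc26}. We show that any two CW-decompositions $c_1, c_2$ of $W$ yield stably isomorphic two-sided quasi-formations $\theta^{f_0,f_1}_{c_1,F}$ and $\theta^{f_0,f_1}_{c_2,F}$. Throughout, we check that each isomorphism used there is also compatible with $V_1$, $p_0$ and $p_1$. By Lemma \ref{lem:cwhe-isom}, $\theta^{f_0,f_1}_{c,F}$ depends only on the homotopy type of the $q$-skeleton relative to the inclusion into $W$. The one-sided argument reduces the comparison of two decompositions to a finite sequence of elementary moves on the $q$-skeleton: cellular homotopy equivalences, handled by Lemma \ref{lem:cwhe-isom}, and the addition of a trivial pair of cells in dimensions $q$ and $q+1$, or the addition of $q$-cells that are null-homotopic in $W$.

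For the second type of move, I would take the one-sided situation from \cite{csn-qfc26}. There, enlarging the skeleton $K$ by a wedge of $q$-spheres bounding $(q+1)$-cells changes $U$ to a boundary connected sum $U \natural (\natural_k S^q \times D^{q+1})$. Correspondingly, $H_q(\partial U)$ changes by a hyperbolic summand $\UH_{2k}$, the T-lagrangian changes by $\{0\} \times \Z^k$, and $V = V_0$ changes by $\Z^k \times \{0\}$. The task is to verify that $V_1$ changes in exactly the same way and that $p_0, p_1$ extend by $0$. Then the new two-sided quasi-formation is $\theta^{f_0,f_1}_{U,F} \oplus \HH_{2k}$, which is a stable isomorphism.

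To check this, recall that $V_i$ is the image of $d'_i : H_{q+1}(W \setminus \interior U, \partial U \sqcup M_i) \rightarrow H_q(\partial U)$, and that $p_i = d_i \circ (d'_i)^{-1}$. The new classes in $H_{q+1}$ of the complement are represented by the cores of the added $(q+1)$-handles, or dually by the discs bounding the new spheres. These are disjoint from both boundary components $M_0$ and $M_1$. Hence their images under $d'_i$ are the same $\Z^k \times \{0\}$ for $i = 0, 1$, and their images under $d_i$ in $H_q(M_i)$ vanish. This gives $p_i \oplus 0$. Since $V_1$ and $p_1$ are obtained by applying the same construction to $-W$, this verification is the same symmetric diagram chase as in Lemma \ref{lem:cwhe-isom}, carried out with the Mayer--Vietoris/excision identifications for the added handles.

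The main obstacle is to make sure that the identification of $H_q(\partial U')$ with $H_q(\partial U) \oplus \Z^{2k}$ used in \cite{csn-qfc26} is induced by inclusions that fix $M_0 \sqcup M_1$. If it is, the diagrams defining $p_0$ and $p_1$ commute on the nose. If the one-sided proof instead uses a choice adapted only to $V_0$, I would first modify that choice by an automorphism fixing $L \oplus (\{0\}\times\Z^k)$ and $\Z^k \times \{0\}$, so that it becomes natural with respect to both boundary components.
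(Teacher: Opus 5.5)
Your proposal is correct and follows the same route as the paper, which simply defers to the proof of \cite[Theorem 6.20]{csn-qfc26}: relate two CW-decompositions of $W$ by stabilisations and by homotopy equivalences of $q$-skeleta over $W$, with the latter handled by Lemma \ref{lem:cwhe-isom}. Your check for the stabilisation move is exactly the paper's argument in Lemma \ref{lem:cwstab-isom}: the added $(q+1)$-discs miss $M_0 \sqcup M_1$, so $V_1$ gains the same summand $\Z^k \times \{0\}$ as $V_0$, and $p_0, p_1$ extend by $0$.
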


\begin{proof}
Follows the proof of \cite[Theorem 6.20]{csn-qfc26}.
\end{proof}

\begin{lem} \label{lem:cwstab-isom}
Let $c$ and $c'$ be CW-decompositions of $W$. If $c'$ is a $k$-fold stabilisation of $c$, then $\theta^{f_0,f_1}_{c',F} \cong \theta^{f_0,f_1}_{c,F} \oplus \HH_{2k}$. In particular, $\theta^{f_0,f_1}_{c',F} \si \theta^{f_0,f_1}_{c,F}$ for any stabilisation $c'$ of $c$.
\end{lem}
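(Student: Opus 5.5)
The plan is to follow the proof of the one-sided analogue, \cite[Lemma 6.18]{csn-qfc26}, and check that the extra data $V_1$, $p_0$, $p_1$ transform correctly. It suffices to treat a single elementary stabilisation ($k=1$); the general case follows by induction. The resulting isomorphisms compose, and $\HH_{2k} \cong \HH_2 \oplus \dots \oplus \HH_2$ compatibly with the direct sum of two-sided quasi-formations with two-sided formations.

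\textbf{Geometric setup.} Recall that $c'$ is obtained from $c$ by adding a cancelling pair of cells of dimensions $q$ and $q+1$. Let $K$ and $K'$ be the $q$-skeleta. Then $K' \simeq K \vee S^q$, up to homotopy equivalence commuting with the inclusions into $W$; by Lemma \ref{lem:cwhe-isom} we may therefore use this model. Choose the embedding of $K'$ so that the new sphere bounds a small embedded disc in a ball $D \subset \interior W$ away from $U$. Take $U' = U \natural (S^q \times D^{q+1})$, a boundary connected sum inside that ball. Then $\partial U' \cong \partial U \# (S^q \times S^q)$, so $H_q(\partial U') \cong H_q(\partial U) \oplus \Z^2$. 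Exactly as in \cite[Lemma 6.18]{csn-qfc26}, this identifies $\UMp \cong \UM \oplus \UH_2$ and $L' = L \oplus (\{0\} \times \Z)$. Here $\{0\} \times \Z$ is the kernel of $H_q(S^q \times S^q) \to H_q(S^q \times D^{q+1})$, and $\Z \times \{0\}$ is generated by the class of the linking sphere $\{pt\} \times S^q$. That class bounds the cocore disc in $W \setminus \interior U'$, which is a disc disjoint from $M_0$ and $M_1$. It follows that $V'_0 = V_0 \oplus (\Z \times \{0\})$, using the description of $V_0$ as the image of $d'_0$ from the proof of Proposition \ref{prop:obstr-proper}.

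\textbf{The two-sided data.} Since $V_1$ is defined by the same construction applied to $-W$, and the stabilisation is local inside the ball $D$, the same argument gives $V'_1 = V_1 \oplus (\Z \times \{0\})$. For the maps $p'_i = d_i \circ (d'_i)^{-1}$, use that the relative class mapping to the new generator of $\Z \times \{0\}$ is represented by the cocore disc. This disc lies in $D$ and has empty intersection with $M_i$, so its boundary component in $M_i$ is zero. Hence $p'_i = p_i \oplus 0$. Naturality of the exact sequences under the inclusion $W \setminus \interior U' \subset W \setminus \interior U$, away from $D$, shows that on the old summand $p'_i$ agrees with $p_i$. Together these give an isomorphism $\theta^{f_0,f_1}_{U',F} \cong \theta^{f_0,f_1}_{U,F} \oplus \HH_2$. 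The final claim then follows from the definition of $\si$.

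\textbf{Main obstacle.} The delicate step is verifying that the Mayer--Vietoris and excision identifications used in \cite[Lemma 6.18]{csn-qfc26} are natural with respect to the pair $(W \setminus \interior U, \partial U \sqcup M_i)$ for both $i$. This must hold simultaneously with the identification of $V_1$ computed via $-W$. The orientation reversal changes $\lambda$ by a sign, but it does not affect the subgroups, so the same splitting works for both sides. I would check this by writing the commutative ladder of long exact sequences for the decomposition $W \setminus \interior U = (W \setminus \interior U') \cup (\text{cancelling handle region})$.
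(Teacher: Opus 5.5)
Your overall strategy matches the paper's proof. You reduce to a single stabilisation and import the one-sided isomorphism $(\UMp;L',V'_0)\cong(\UM;L,V_0)\oplus(\UH_2;\{0\}\times\Z,\Z\times\{0\})$. You then observe that the new generator of $V'_0$ comes from an embedded $(q+1)$-disc in $W\setminus\interior U'$ disjoint from $\partial W$. Hence it also lies in $V'_1$, and it is killed by both $p'_0$ and $p'_1$.

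However, the geometric identification of that generator is wrong as written, and the step built on it fails literally. The linking sphere $\{pt\}\times S^q$ bounds the cocore $\{pt\}\times D^{q+1}$, which lies \emph{inside} $U'$. That is exactly why it generates the summand $\{0\}\times\Z$ of $L'$, as you yourself say when you identify $\{0\}\times\Z$ with the kernel of $H_q(S^q\times S^q)\to H_q(S^q\times D^{q+1})$. So it cannot also generate $\Z\times\{0\}$. Moreover, the cocore does not define any class in $H_{q+1}(W\setminus\interior U',\partial U'\sqcup M_i)$.

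The generator of $\Z\times\{0\}\le V'_i$ is the push-off $S^q\times\{pt\}$ of the new core sphere. The relative class you need is represented by the disc of the cancelling $(q+1)$-cell, i.e.\ the small embedded disc bounded by the new sphere, minus its collar inside $U'$. This is the ``embedded $D^{q+1}$ used to construct the stabilisation'' in the paper. You already choose this disc in your setup, but then mislabel it as the cocore. Once you replace ``cocore disc'' by this disc and $\{pt\}\times S^q$ by $S^q\times\{pt\}$, your argument goes through and coincides with the paper's: the same disc is a relative class for both $M_0$ and $M_1$, giving $V'_1=V_1\oplus(\Z\times\{0\})$, and its boundary in $M_i$ is empty, giving $p'_i|_{\Z\times\{0\}}=0$.
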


\begin{proof}
As in \cite[Lemma 6.22]{csn-qfc26}, we assume that $c'$ is obtained from $c$ by a single stabilisation step, and define $K$, $K'$, $U$ and $U'$. Let $(\UM; L, V_0, V_1, p_0, p_1) = \theta^{f_0,f_1}_{U,F}$ and $(\UMp; L', V'_0, V'_1, p'_0, p'_1) = \theta^{f_0,f_1}_{U',F}$. We saw that $(\UMp; L', V'_0) \cong (\UM; L, V_0) \oplus (\UH_2; \{ 0 \} \times \Z, \Z \times \{ 0 \})$. The embedded $D^{q+1}$ used to construct the stabilisation $c'$ from $c$ also represents an element of $H_{q+1}(W \setminus \interior U', \partial U' \sqcup M_1)$, hence, similarly to the case of $V'_0$, we have $(0,(0,1)) \in V'_1$ and deduce that $V'_1 = V_1 \oplus (\Z \times \{ 0 \})$. Moreover, since the images of the homology class represented by $D^{q+1}$ under the boundary maps $H_{q+1}(W \setminus \interior U', \partial U' \sqcup M_0) \rightarrow H_q(M_0)$ and $H_{q+1}(W \setminus \interior U', \partial U' \sqcup M_1) \rightarrow H_q(M_1)$ are both trivial, we have $p'_0 \big| _{\Z \times \{ 0 \}} = p'_1 \big| _{\Z \times \{ 0 \}} = 0$. Therefore the isomorphism of \cite[Lemma 6.22]{csn-qfc26} is also an isomorphism $\theta^{f_0,f_1}_{U',F} \cong \theta^{f_0,f_1}_{U,F} \oplus \HH_2$.
\end{proof}

\begin{prop} \label{prop:theta-cw-real}
If $\theta^{f_0,f_1}_{W,F} \si x$ for some $x \in \gs^T_{2q+1}(B,\xi,f_0,f_1)$, then there is a CW-decomposition $c$ of $W$ and an integer $k \geq 0$ such that $\theta^{f_0,f_1}_{c,F} \cong x \oplus \HH_{2k}$.
\end{prop}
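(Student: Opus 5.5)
This is the two-sided analogue of the corresponding realisation statement for $\theta_{W,F}$ in \cite[Section 6]{csn-qfc26}, and the plan is to follow that proof while checking that the maps $p_0,p_1$ are carried along. By definition of $\si$, the hypothesis $\theta^{f_0,f_1}_{W,F} \si x$ means that for some CW-decomposition $c_0$ of $W$ there are $k_0,l \geq 0$ with
\[
\theta^{f_0,f_1}_{c_0,F} \oplus \HH_{2k_0} \cong x \oplus \HH_{2l}.
\]
By Lemma \ref{lem:cwstab-isom}, replacing $c_0$ with its $k_0$-fold stabilisation $c_1$ gives $\theta^{f_0,f_1}_{c_1,F} \cong x \oplus \HH_{2l}$. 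If $l = 0$ we are done with $k=0$. In general the aim is to show that an isomorphism of the form $\theta^{f_0,f_1}_{c_1,F} \cong x \oplus \HH_{2l}$ can be realised geometrically by a change of CW-decomposition, giving the statement with $k = l$ (or after further stabilisation).

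The key step is the realisation of automorphisms. First I would fix a regular neighbourhood $U$ for $c_1$ and write $\theta^{f_0,f_1}_{U,F} = (\UM; L, V_0, V_1, p_0, p_1)$. The composite isomorphism $h : \UM \to \UMp$, where $x \oplus \HH_{2l} = (\UMp; L', V'_0, V'_1, p'_0, p'_1)$, need not be realisable directly. However, the one-sided proof of the analogous statement in \cite{csn-qfc26} shows how to modify $c_1$, by handle slides and stabilisations, so that the induced isomorphism of $H_q(\partial U)$ becomes any given automorphism of $(\UM; L, V_0)$, possibly after adding $\HH_{2m}$. That proof relies on Lemma \ref{lem:cwhe-isom} for homotopy equivalences of $q$-skeleta commuting with the inclusions, together with the observation that isomorphisms of $\UM$ preserving $L$ and $V_0$ come from such homotopy equivalences.

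I would run exactly that argument and then check that it also respects $V_1, p_0, p_1$. For $V_1$ and $p_1$ this is the point of Lemma \ref{lem:cwhe-isom}: the diffeomorphism $G$ can be taken to be the identity on $\partial W = M_0 \sqcup M_1$, so the isomorphism it induces commutes with the boundary maps to $H_q(M_0)$ and $H_q(M_1)$. Stabilisation adds exactly $\HH_2$ summands with $p_i$ zero on the new part, by Lemma \ref{lem:cwstab-isom}. Hence every geometric move used in the one-sided proof transforms the two-sided quasi-formation by an isomorphism of two-sided quasi-formations.

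The main obstacle is that $h$ is required to preserve $V_1$ and intertwine $p_i$ with $p'_i$, whereas the one-sided realisation only has to preserve $(L,V_0)$. I would handle this by not realising $h$ from scratch. Instead I would produce, via the one-sided result, a CW-decomposition $c$ and a geometric isomorphism $g : \theta_{c,F} \cong (\UMp; L', V'_0) \oplus (\UH_{2k}; \ldots)$. I would then argue that the data $V_1, p_0, p_1$ of $\theta^{f_0,f_1}_{c,F}$ are determined geometrically from $(\UM,L,V_0)$ and $W$. Since both $\theta^{f_0,f_1}_{c,F}$ and $x \oplus \HH_{2l}$ are isomorphic to $\theta^{f_0,f_1}_{c_1,F}$ stably, I would use the compatibility diagrams of Lemma \ref{lem:cwhe-isom} to show that $g$ matches the remaining data as well. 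If that identification fails, the fallback is to realise $h$ directly as a composite of elementary automorphisms (handle slides) and check each one on $V_1$ and $p_i$ via the $-W$ picture.
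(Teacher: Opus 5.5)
Your first paragraph is already a complete proof, and it is the argument the paper inherits from \cite[Proposition 6.25]{csn-qfc26}. From $\theta^{f_0,f_1}_{c_0,F} \oplus \HH_{2k_0} \cong x \oplus \HH_{2l}$, apply Lemma \ref{lem:cwstab-isom} to the $k_0$-fold stabilisation $c_1$ of $c_0$. This gives $\theta^{f_0,f_1}_{c_1,F} \cong x \oplus \HH_{2l}$, which is the stated conclusion with $c = c_1$ and $k = l$. The statement explicitly allows the summand $\HH_{2k}$, and $\cong$ only asks for an abstract isomorphism of two-sided quasi-formations, i.e.\ equality in $\gs^T_{2q+1}(B,\xi,f_0,f_1)$. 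Nothing has to be realised geometrically. So your remark ``If $l = 0$ we are done'' misreads what is required, and everything after it should be deleted.

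It should be deleted rather than kept as an alternative route, because as a proof it would not stand. First, neither the paper nor its ingredients show that an arbitrary automorphism of $(\UM;L,V_0)$ is induced by a change of CW-decomposition. Lemma \ref{lem:cwhe-isom} only goes from homotopy equivalences of skeleta to isomorphisms, not back. Second, an isomorphism of the one-sided data cannot in general be upgraded to the two-sided data. The $V_1, p_0, p_1$ of the abstract target $x \oplus \HH_{2l}$ are not determined by its one-sided part. For instance, take an automorphism $J \neq \id$ of $\UP_1$. Then $(\UM;L,V_0,V_1,p_0,p_1)$ and $(\UM;L,V_0,V_1,p_0,J \circ p_1)$ have the same image under the forgetful map. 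But when they are elementary and proper they induce $I_x$ and $J \circ I_x$ respectively, whereas isomorphic two-sided quasi-formations induce the same isomorphism.
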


\begin{proof}
Follows the proof of \cite[Proposition 6.25]{csn-qfc26}.
\end{proof}

\begin{thm} \label{thm:theta-bordism-inv}
Suppose that $F' : W' \rightarrow B$ is another $q$-connected normal bordism between $f_0$ and $f_1$ over $(B,\xi)$. If $F'$ is normally bordant to $F$ (rel boundary), then $[\theta^{f_0,f_1}_{W,F}] = [\theta^{f_0,f_1}_{W',F'}] \in \ell^T_{2q+1}(B,\xi,f_0,f_1)$. 
\end{thm}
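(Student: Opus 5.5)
The plan is to follow the proof of the one-sided bordism invariance in \cite{csn-qfc26} and to check that every step also transports $V_1$, $p_0$ and $p_1$. Let $G : X \rightarrow B$ be a normal bordism rel boundary between $F$ and $F'$. Here $X$ is a $(2q{+}2)$-manifold with $\partial X = W \cup_{\partial} -W'$, and the part $(M_0 \sqcup M_1) \times I$ is identified with a collar.

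\textbf{Step 1: reduce to elementary handle moves.} By surgery below the middle dimension on $X$ rel boundary, we may assume $G$ is $(q{+}1)$-connected. Then, as in \cite{csn-qfc26}, we take a handle decomposition of $X$ relative to $W$ (fixing the collar on $M_0 \sqcup M_1$), which presents $W'$ from $W$ by a sequence of handle attachments in the interior. Since both $F$ and $F'$ are $q$-connected, we can cancel or trade handles so that only handles of index $q{+}1$ and $q{+}2$ remain. In the one-sided argument, two facts are shown. A $(q{+}1)$-handle attached to $W \times I$ changes the CW-decomposition by a stabilisation, and the obstruction changes by $\HH_2$. A $(q{+}2)$-handle changes the obstruction by the second elementary equivalence of Definition \ref{def:gc}, swapping $\{0\}\times\Z$ with $\Z\times\{0\}$ in an $\UH_2$ summand while keeping $V$ fixed. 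This step can be quoted from the proof of the corresponding theorem in \cite[Section 6]{csn-qfc26}.

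\textbf{Step 2: track the two-sided data through each move.} The key observation is that all handles lie away from $M_0 \sqcup M_1$. Hence the relative groups $H_{q+1}(W \setminus \interior U, \partial U \sqcup M_i)$ and the boundary maps $d_i$ to $H_q(M_i)$ are natural with respect to the diffeomorphisms and inclusions used in each step. The subgroup $V_1$ and the map $p_1$ are obtained by running the same construction on $-W$, and the handle moves on $-W$ are the same moves as on $W$. Therefore the same argument as for $V_0$ shows that $V_1$ either stays unchanged or acquires a summand $\Z \times \{0\}$ on which $p_1$ vanishes. This is the same reasoning as in Lemma \ref{lem:cwstab-isom}, where the class of the new disc has trivial image in $H_q(M_i)$. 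For the $(q{+}2)$-handle move, only the T-lagrangian $L$ changes: $U$ is not altered, and the discs representing $V_0$ and $V_1$ can be chosen disjoint from the attaching region. So both $V_i$ and both $p_i$ are preserved, and we obtain an elementary equivalence of two-sided quasi-formations. Combining these with Lemma \ref{lem:cwhe-isom} and Theorem \ref{thm:tw-welldef} for the changes of CW-decomposition gives $[\theta^{f_0,f_1}_{W,F}] = [\theta^{f_0,f_1}_{W',F'}]$.

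\textbf{Main obstacle.} The delicate point is the $(q{+}2)$-handle step. We must verify that the identification of $H_q(\partial U)$ before and after the move carries $V_1$ to $V_1$ and is compatible with $p_1$, and not only with $V_0$ and $p_0$. In \cite{csn-qfc26} the invariance of $V$ was checked for $W$ alone. The first attempt here is to note that the handle is attached to $W$ rel $\partial W$, so reversing orientation gives the corresponding handle on $-W$, and the commutative diagrams used for $M_0$ apply verbatim with $M_1$. If this symmetry is not immediate, the fallback is to use Proposition \ref{prop:obstr-proper}: $\Ker p_i = \Image \partial$ is described purely in terms of $(W \setminus \interior U, \partial U)$. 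One would then check that $V_1$ is determined by $V_0$, $L$ and this kernel up to the allowed equivalences.
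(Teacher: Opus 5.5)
Your overall plan matches the paper's: rerun the one-sided argument of \cite[Theorem 6.26]{csn-qfc26} and check that $V_1,p_0,p_1$ are carried along. However, the handle bookkeeping you attribute to \cite{csn-qfc26} is wrong, and that is exactly where the proof's content lies.

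\textbf{The moves are misassigned.} Since $(X,W)$ and $(X,W')$ are both $q$-connected, handles of index $\le q$ and, dually, of index $\ge q+2$ can be eliminated. One therefore reduces, as the paper does, to $W'$ being obtained from $W$ by a single $q$-surgery; the intermediate stages remain $q$-connected over $B$.
\begin{itemize}
\item A $(q{+}1)$-handle is not a stabilisation of a CW-decomposition of $W$. It is the very move that realises the second elementary equivalence (compare Lemma \ref{lem:theta-surg-real}). It cannot amount to adding $\HH_2$: being elementary is invariant under stable isomorphism, whereas a $q$-surgery can change whether $W$ is an h-cobordism (Proposition \ref{prop:hcob-elem}).
\item Your $(q{+}2)$-handles are surgeries on $(q{+}1)$-spheres. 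These in general meet the thickened $q$-skeleton, since $q+(q+1)=2q+1$, and can destroy $q$-connectivity over $B$. So treating them as a pure lagrangian swap with the $V_i$ fixed is unjustified.
\item Your statement that in the swap ``only $L$ changes: $U$ is not altered'' is self-contradictory. $L$ is the image of $H_{q+1}(U,\partial U)\to H_q(\partial U)$, so it is determined by $U$.
\end{itemize}

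\textbf{The missing idea is how $U$ changes.} Choose $U\approx U_0\natural(S^q\times D^{q+1})$ containing the surgery region. After the surgery, $U'\approx U_0\natural(D^{q+1}\times S^q)$, which gives:
\begin{itemize}
\item $\partial U'=\partial U\approx\partial U_0\#(S^q\times S^q)$, so $\UM\cong\UM_0\oplus\UH_2$;
\item $L_0\oplus(\{0\}\times\Z)$ becomes $L_0\oplus(\Z\times\{0\})$;
\item $W\setminus\interior U=W'\setminus\interior U'$.
\end{itemize}
Now $V_i$ and $p_i$ are defined through the maps $d'_i$ and $d_i$ on $H_{q+1}(W\setminus\interior U,\partial U\sqcup M_i)$ (see the proof of Proposition \ref{prop:obstr-proper}). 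Hence all four pieces of data are literally unchanged, for $i=0$ and $i=1$ alike. Your ``main obstacle'' therefore disappears, and no orientation-reversal symmetry argument is needed.

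Your fallback would not have worked anyway. $p_1$ is not determined by $V_0$, $L$ and $\Ker p_i$: post-composing $p_1$ with an automorphism of $-\UPs_1$ changes none of these, but it changes the induced isomorphism $I_x$.
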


\begin{proof}
As in \cite[Theorem 6.26]{csn-qfc26}, we assume that $W'$ is obtained from $W$ by a single $q$-surgery, and define $c_0$,$c$, $c'$, $U_0$ $U$, $U'$, $\UM_0$ and $L_0$. Let $(\UM; L, V_0, V_1, p_0, p_1) = \theta^{f_0,f_1}_{U,F}$ and $(\UMp; L', V'_0, V'_1, p'_0, p'_1) = \theta^{f_0,f_1}_{U',F}$. We saw that $\partial U' = \partial U \approx \partial U_0 \# (S^q \times S^q)$ and hence $\UMp = \UM \cong \UM_0 \oplus \UH_2$, and if $V_0$ is identified with its image in $H_q(\partial U_0 \# (S^q \times S^q))$, then $(\UM; L, V_0) \cong (\UM_0 \oplus \UH_2; L_0 \oplus (\{ 0 \} \times \Z), V_0)$ and $(\UMp; L', V'_0) \cong (\UM_0 \oplus \UH_2; L_0 \oplus (\Z \times \{ 0 \}), V_0)$. In particular, the reason that $V'_0$ corresponds to the same subgroup of $H_q(\partial U_0 \# (S^q \times S^q))$ as $V_0$ is that $W \setminus \interior U = W' \setminus \interior U'$. Since $V_1$, $p_0$, $p_1$ and $V'_1$, $p'_0$, $p'_1$ also depend only on $W \setminus \interior U$ and $W' \setminus \interior U'$, the previous identifications also determine isomorphisms $(\UM; L, V_0, V_1, p_0, p_1) \cong (\UM_0 \oplus \UH_2; L_0 \oplus (\{ 0 \} \times \Z), V_0, V_1, p_0, p_1)$ and $(\UMp; L', V'_0, V'_1, p'_0, p'_1) \cong (\UM_0 \oplus \UH_2; L_0 \oplus (\Z \times \{ 0 \}), V_0, V_1, p_0, p_1)$. Hence $\theta^{f_0,f_1}_{W,F} \sim \theta^{f_0,f_1}_{W',F'}$.
\end{proof}

\begin{thm} \label{thm:theta-equiv-real}
If $\theta^{f_0,f_1}_{W,F} \sim x$ for some $x \in \s^T_{2q+1}(B,\xi,f_0,f_1)$, then there is a $q$-connected normal bordism $F' : W' \rightarrow B$ that is normally bordant to $F$ (rel boundary) such that $\theta^{f_0,f_1}_{W',F'} \si x$.
\end{thm}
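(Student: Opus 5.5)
The plan is to follow the proof of the one-sided realisation theorem in \cite{csn-qfc26} (the analogue of \cite[Theorem 6.27]{csn-qfc26}), and to check at each step that the extra data $V_1, p_0, p_1$ are carried along. Since $\sim$ is generated by stable isomorphisms and by elementary equivalences of the second type, I will induct on the length of a chain of elementary equivalences joining $\theta^{f_0,f_1}_{W,F}$ to $x$. It therefore suffices to treat a single step: whenever $\theta^{f_0,f_1}_{W,F}$ is related to $x$ by one elementary equivalence, there is a $q$-connected $F'$, normally bordant to $F$ rel boundary, with $\theta^{f_0,f_1}_{W',F'} \si x$. If the step is a stable isomorphism, I take $F' = F$.

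Now suppose the step is of the second type. By Proposition \ref{prop:theta-cw-real}, after choosing a suitable CW-decomposition $c$ of $W$ and absorbing hyperbolic summands, we may write $\theta^{f_0,f_1}_{U,F} \cong (\UM_0 \oplus \UH_2; L_0 \oplus (\{0\} \times \Z), V_0, V_1, p_0, p_1)$ and $x \oplus \HH_{2k} \cong (\UM_0 \oplus \UH_2; L_0 \oplus (\Z \times \{0\}), V_0, V_1, p_0, p_1)$. Both $\HH_{2k}$ and $\UH_2$ are compatible with the splitting, since the $\HH_{2k}$ summand is added to $V_0$ and $V_1$ in the same way. As in \cite[Theorem 6.27]{csn-qfc26}, the $\UH_2$ summand in $H_q(\partial U)$ corresponds to a $\partial U_0 \# (S^q \times S^q)$ splitting, and the class $(0,1) \in L$ is represented by an embedded $q$-sphere in $\partial U$. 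This sphere bounds a $(q+1)$-disc in $U$ and maps null-homotopically to $B$. Doing surgery on $W$ along it, using the bundle data to get framing compatibility, gives $F' : W' \rightarrow B$. This $F'$ is $q$-connected and normally bordant to $F$ rel boundary. It comes with a neighbourhood $U'$ of the new $q$-skeleton satisfying $\partial U' = \partial U$ and $W' \setminus \interior U' = W \setminus \interior U$. The new T-lagrangian is $L_0 \oplus (\Z \times \{0\})$. I also need to check that the orientation-reversed surgery on $-W$ gives the same result.

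The key step is then exactly the observation already used in Theorem \ref{thm:theta-bordism-inv}. The subgroups $V_0, V_1 \leq H_q(\partial U)$ and the morphisms $p_0, p_1$ are defined purely in terms of the pair $(W \setminus \interior U, \partial U)$ together with $M_0$ and $M_1$. They are images of $H_{q+1}(W \setminus \interior U, \partial U \sqcup M_i) \rightarrow H_q(\partial U)$, and $p_i$ is defined through the boundary map to $H_q(M_i)$. Since the complement is unchanged, $V_i' = V_i$ and $p_i' = p_i$ under the identification $\partial U' = \partial U$. Hence $\theta^{f_0,f_1}_{U',F'} \cong x \oplus \HH_{2k}$, and so $\theta^{f_0,f_1}_{W',F'} \si x$.

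I expect the main obstacle to be the realisation step: arranging that the abstract isomorphism $\theta^{f_0,f_1}_{U,F} \cong (\UM_0 \oplus \UH_2; \ldots)$ is induced geometrically. Concretely, I need a CW-decomposition whose $q$-skeleton neighbourhood splits off an $S^q \times S^q$ summand matching the given $\UH_2$. In the one-sided case this is handled in \cite{csn-qfc26} by realising elements of $\RU$ and changing CW-structures. The point to verify is that those modifications are supported in $U$, or are induced by self-diffeomorphisms of $W$ fixing $\partial W$. If so, they preserve $V_1$ and $p_1$ by the same diagram argument as in Lemma \ref{lem:cwhe-isom}, and the two-sided statement follows verbatim.
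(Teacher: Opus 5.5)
Your outer structure matches the paper's. Theorem \ref{thm:theta-equiv-real} is proved as in \cite[Theorem 6.27]{csn-qfc26}: induct along a chain of elementary equivalences, take $F'=F$ for stable isomorphisms, and realise a step of the second type by one $q$-surgery (Lemma \ref{lem:theta-surg-real}). Your remark that $V_i$, $p_i$ only depend on $W\setminus\interior U$ is the one used in Theorem \ref{thm:theta-bordism-inv}.

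\textbf{The surgery step fails as written.} You surger the sphere representing $(0,1)\in L_0\oplus(\{0\}\times\Z)$, which by your own description bounds a disc in $U$. Surgery on such a sphere is trivial: performed inside $U$ with the disc framing, it replaces $U$ by $U\#(S^{q+1}\times S^q)$, which is not a skeleton neighbourhood. It also leaves $\Ker(H_q(\partial U)\to H_q(U))$ unchanged, so it cannot produce the T-lagrangian $L_0\oplus(\Z\times\{0\})$. In the picture of Theorem \ref{thm:theta-bordism-inv}, $U=U_0\natural(S^q\times D^{q+1})$ with $S^q\times 0$ the surgery sphere. There $\{0\}\times\Z\le L$ is spanned by the fibre sphere $\mathrm{pt}\times S^q$ and $\Z\times\{0\}$ by $S^q\times \mathrm{pt}$. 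After surgery $U'=U_0\natural(D^{q+1}\times S^q)$, and it is $S^q\times\mathrm{pt}$ that bounds there. So you must surger a sphere representing the partner $(1,0)$, which is not in $L$ and is essential in $U$. Your justification for the surgery (``bounds in $U$, hence null over $B$'') does not apply to that sphere.

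\textbf{The step you leave conditional is where the paper's two-sided input lies.} It is not handled by making the isomorphism of Proposition \ref{prop:theta-cw-real} geometric, nor by self-diffeomorphisms of $W$. In Lemma \ref{lem:theta-surg-real} the paper:
\begin{itemize}
\item performs the surgery of \cite[Lemma 6.28]{csn-qfc26};
\item uses the construction of Theorem \ref{thm:theta-bordism-inv} to obtain decompositions $c'$ of $W$ and $c''$ of $W''$, with obstructions $(\UMp_0\oplus\UH_2; L'_0\oplus(\{0\}\times\Z), V'_0, V'_1, p'_0, p'_1)$ and $(\UMp_0\oplus\UH_2; L'_0\oplus(\Z\times\{0\}), V'_0, V'_1, p'_0, p'_1)$;
\item compares the first with the given $(\UbM_0\oplus\UH_2; \bar{L}_0\oplus(\{0\}\times\Z), \bar{V}_0, \bar{V}_1, \bar{p}_0, \bar{p}_1)$ via the stable isomorphism of Theorem \ref{thm:tw-welldef}.
\end{itemize}
The key point is that this two-sided stable isomorphism is built from the same $J$ as the one-sided one in \cite[Theorem 6.20]{csn-qfc26}. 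By \cite[Lemma 6.28]{csn-qfc26}, $J$ can be chosen so that the map also carries $L'_0\oplus(\Z\times\{0\})$ onto $\bar{L}_0\oplus(\Z\times\{0\})$. A single comparison map is therefore compatible both with $(V_1,p_0,p_1)$ and with the swapped lagrangian. That is what yields $\theta^{f_0,f_1}_{W'',F''}\si(\UbM_0\oplus\UH_2; \bar{L}_0\oplus(\Z\times\{0\}),\bar{V}_0,\bar{V}_1,\bar{p}_0,\bar{p}_1)\si x$.
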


\begin{proof}
Follows the proof of \cite[Theorem 6.27]{csn-qfc26}.
\end{proof}

\begin{lem} \label{lem:theta-surg-real}
Suppose that $\theta^{f_0,f_1}_{W,F} \si (\UM_0 \oplus \UH_2; L_0 \oplus ( \{0 \} \times \Z), V_0, V_1, p_0, p_1)$ for some $\UM_0$, $L_0$, $V_0$, $V_1$, $p_0$ and $p_1$. Then there is a $q$-connected normal bordism $F'' : W'' \rightarrow B$ which is normally bordant to $F$ such that $\theta^{f_0,f_1}_{W'',F''} \si (\UM_0 \oplus \UH_2; L_0 \oplus (\Z \times \{0 \} ), V_0, V_1, p_0, p_1)$.
\end{lem}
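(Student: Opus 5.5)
This follows the pattern of \cite[Lemma 6.28]{csn-qfc26} (the one-sided analogue), adapted by checking that the extra data $V_1$, $p_0$, $p_1$ is carried along. The plan is to realise the stable isomorphism by an actual regular neighbourhood, perform a single $q$-surgery on $W$ in the interior of that neighbourhood, and then run the argument of Theorem \ref{thm:theta-bordism-inv} in reverse.

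First, by Proposition \ref{prop:theta-cw-real} there is a CW-decomposition $c$ of $W$ and $k \geq 0$ with
\[
\theta^{f_0,f_1}_{c,F} \cong (\UM_0 \oplus \UH_2; L_0 \oplus (\{0\} \times \Z), V_0, V_1, p_0, p_1) \oplus \HH_{2k}.
\]
After absorbing $\HH_{2k}$ into $\UM_0$, $L_0$, $V_i$ and $p_i$ (extended by zero), we may assume $k = 0$. So there is a regular neighbourhood $U$ of an embedded $q$-skeleton with $\theta^{f_0,f_1}_{U,F} = (\UM; L, V_0, V_1, p_0, p_1)$ identified with the right-hand side. Here $M = H_q(\partial U)$, $L = \Ker(H_q(\partial U) \to H_q(U))$, and the $\UH_2$ summand is spanned by two classes $e, f \in H_q(\partial U)$ with $f \in L$.

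The geometric step, exactly as in \cite[Lemma 6.28]{csn-qfc26}, is to represent the $\UH_2$ summand by a connected summand $S^q \times S^q$ of $\partial U$, i.e.\ $\partial U \approx N \# (S^q \times S^q)$, compatibly with $\partial U_0$. This needs $q \geq 3$ for Whitney/Haefliger-type embedding arguments, or else the stabilised setting already used in the one-sided paper. Because $f$ lies in $L$ and $\mu(e)=0$, the class $e$ is represented by an embedded sphere with trivial normal bundle on which $F$ is null-homotopic compatibly with the bundle data. We then do surgery on $W$ along the corresponding sphere pushed into $\interior U$; this swaps the roles of $e$ and $f$. The result is $F'' : W'' \to B$, normally bordant to $F$ rel boundary and still $q$-connected, together with a neighbourhood $U''$ satisfying $\partial U'' = \partial U$ and $W'' \setminus \interior U'' = W \setminus \interior U$. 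By \cite[Lemma 6.28]{csn-qfc26} the new T-lagrangian is $L_0 \oplus (\Z \times \{0\})$ and $V_0$ is unchanged.

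The two-sided part is then just the observation used in Theorem \ref{thm:theta-bordism-inv}. The subgroups $V_0, V_1 \le H_q(\partial U)$ and the maps $p_0, p_1$ are defined purely from $W \setminus \interior U$, its boundary pieces $\partial U \sqcup M_i$, and the boundary maps to $H_q(M_i)$ and $H_q(\partial U)$ (see the diagram in the proof of Proposition \ref{prop:obstr-proper}). Since the complement is literally unchanged, $V''_i = V_i$ and $p''_i = p_i$ under the identification $\partial U'' = \partial U$. Hence
\[
\theta^{f_0,f_1}_{U'',F''} \cong (\UM_0 \oplus \UH_2; L_0 \oplus (\Z \times \{0\}), V_0, V_1, p_0, p_1),
\]
and passing to stable isomorphism classes gives the claim.

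The main obstacle is the geometric realisation step: showing that the algebraic splitting off of $\UH_2$ with $f \in L$ comes from an embedded framed sphere in $\partial U$ along which surgery on $W$ is possible over $B$. Following the one-sided lemma, I would first try to handle this by stabilising $c$ so that the $\UH_2$ summand is represented by the boundary of a trivially attached $q$-cell pair. The two-sided data imposes no extra constraints beyond those already verified there.
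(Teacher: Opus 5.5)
There is a genuine gap, and it sits exactly at the step you flag as the main obstacle. Your two-sided argument (the complement is unchanged, so $V_i'' = V_i$ and $p_i'' = p_i$) only proves the lemma if two identifications coincide: the one of $\theta^{f_0,f_1}_{U,F}$ with the given representative, and the one of $\theta^{f_0,f_1}_{U'',F''}$ with the swapped representative. They must be the \emph{same} identification of $H_q(\partial U)$. That requires the given algebraic summand $\UH_2$ to be geometrically the $S^q \times S^q$ summand of a splitting $U = U_0 \natural (S^q \times D^{q+1})$, with $U_0$ a regular neighbourhood of a $q$-skeleton.

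You do not establish this, and the justifications you sketch do not work:
\begin{itemize}
\item $H_j(\partial U) \cong H_j(B)$ for $j<q$, so $\partial U$ is not $(q{-}1)$-connected and there is no reason for $e$ to be spherical.
\item $\mu(e)=0$ is a homological condition. It does not make $F$ null-homotopic on a representing sphere.
\item Stabilising $c$ only produces summands of the form $\HH_2$, in which $V_0$ and $V_1$ both contain $\Z \times \{0\}$ and $p_0$, $p_1$ vanish there (Lemma \ref{lem:cwstab-isom}). In the given representative, $V_0$, $V_1$, $p_0$, $p_1$ need not split along $\UM_0 \oplus \UH_2$ at all.
\end{itemize}
Restricting to $q \geq 3$ would also lose the case $q=2$ of the statement. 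Nor can \cite[Lemma 6.28]{csn-qfc26} be used as a black box: it only gives a stable isomorphism of the one-sided data $(\UM; L, V_0)$, and such a stable isomorphism need not respect $V_1$, $p_0$, $p_1$.

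The paper does not try to realise the given representative literally. It performs the surgery of \cite[Lemma 6.28]{csn-qfc26} and computes both obstructions via Theorem \ref{thm:theta-bordism-inv}, using the CW-decompositions $c'$, $c''$ and the neighbourhood $U_0'$ adapted to that surgery. This yields $\theta^{f_0,f_1}_{W,F} \si (\UMp_0 \oplus \UH_2; L'_0 \oplus (\{0\} \times \Z), V'_0, V'_1, p'_0, p'_1)$ and $\theta^{f_0,f_1}_{W'',F''} \si (\UMp_0 \oplus \UH_2; L'_0 \oplus (\Z \times \{0\}), V'_0, V'_1, p'_0, p'_1)$ with the same $V'_i$, $p'_i$; this is the part of your argument that is right.

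The first of these is related to the given representative by the stable isomorphism of Theorem \ref{thm:tw-welldef}. That isomorphism is built from the same map $J$ as in the one-sided well-definedness proof, so it automatically respects $V_1$, $p_0$, $p_1$. The missing idea is that, by the proof of \cite[Lemma 6.28]{csn-qfc26}, $J$ can be chosen so that this stable isomorphism also carries (the stabilisation of) $L'_0 \oplus (\Z \times \{0\})$ onto $L_0 \oplus (\Z \times \{0\})$. Hence the very same map is a stable isomorphism between the swapped two-sided quasi-formations. Without that compatibility, a stable isomorphism between the unswapped representatives gives no control over the swapped lagrangians.
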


\begin{proof}
We construct the normal bordism $F''$ from $F$ by applying the same surgery as in \cite[Lemma 6.28]{csn-qfc26}. We also use the notation established there, except for writing $V_0 = V$ and adding $V_1$, $p_0$ and $p_1$ to the quasi-formations (or using appropriate variants, eg.\ $\bar{V}_0 = \bar{V}$). Thus $\theta^{f_0,f_1}_{W,F} \si (\UbM_0 \oplus \UH_2; \bar{L}_0 \oplus ( \{0 \} \times \Z), \bar{V}_0, \bar{V}_1, \bar{p}_0, \bar{p}_1)$ and we need to show that $\theta^{f_0,f_1}_{W'',F''} \si (\UbM_0 \oplus \UH_2; \bar{L}_0 \oplus (\Z \times \{0 \} ), \bar{V}_0, \bar{V}_1, \bar{p}_0, \bar{p}_1)$.

We apply Theorem \ref{thm:theta-bordism-inv} and define $c'$, $c''$, $U'_0$, $\UMp_0$ and $L'_0$ as before, and we get that $\theta^{f_0,f_1}_{W,F} \si (\UMp_0 \oplus \UH_2; L'_0 \oplus (\{ 0 \} \times \Z), V'_0, V'_1, p'_0, p'_1)$ and $\theta^{f_0,f_1}_{W'',F''} \si (\UMp_0 \oplus \UH_2; L'_0 \oplus ( \Z \times \{0 \}), V'_0, V'_1, p'_0, p'_1)$ for some $V'_0$, $V'_1$, $p'_0$, $p'_1$. By Theorem \ref{thm:tw-welldef} $(\UMp_0 \oplus \UH_2; L'_0 \oplus (\{ 0 \} \times \Z), V'_0, V'_1, p'_0, p'_1)$ and $(\UbM_0 \oplus \UH_2; \bar{L}_0 \oplus ( \{0 \} \times \Z), \bar{V}_0, \bar{V}_1, \bar{p}_0, \bar{p}_1)$ are stably isomorphic. Note that Theorem \ref{thm:tw-welldef} constructs the same stable isomorphism from a given $J$ (used in \cite[Lemma 6.23]{csn-qfc26}) as \cite[Theorem 6.20]{csn-qfc26}. The proof of \cite[Lemma 6.28]{csn-qfc26} shows that $J$ can be chosen such that the stable isomorphism restricts to an isomorphism between (the stabilisations of) $L'_0 \oplus ( \Z \times \{0 \})$ and $\bar{L}_0 \oplus (\Z \times \{0 \} )$. Therefore it is also a stable isomorphism $(\UMp_0 \oplus \UH_2; L'_0 \oplus ( \Z \times \{0 \}), V'_0, V'_1, p'_0, p'_1) \si (\UbM_0 \oplus \UH_2; \bar{L}_0 \oplus (\Z \times \{0 \} ), \bar{V}_0, \bar{V}_1, \bar{p}_0, \bar{p}_1)$.
\end{proof}

\subsection{Elementary obstructions and h-cobordisms}

Now we show that if $W$ is an h-cobordism, then $\theta^{f_0,f_1}_{W,F}$ is elementary, and it induces the same isomorphism $E_q(M_0,f_0) \rightarrow E_q(M_1,f_1)$ as $W$.

\begin{defin} \label{def:iw}
Suppose that $W$ is an h-cobordism. The \emph{isomorphism induced by $W$}, denoted $I_W : E_q(M_0,f_0) \rightarrow E_q(M_1,f_1)$, is given by the composition $H_q(M_0) \rightarrow H_q(W) \rightarrow H_q(M_1)$ of the isomorphism $H_q(M_i) \xra{\cong} H_q(W)$ induced by the inclusion (for $i=0$) and its inverse (for $i=1$).
\end{defin}

\begin{prop} \label{prop:hcob-elem}
$W$ is an h-cobordism if and only if $\theta^{f_0,f_1}_{W,F} \in \s^T_{2q+1}(B,\xi,f_0,f_1)$ is elementary. In this case $I_W = I_{\theta^{f_0,f_1}_{W,F}}$.
\end{prop}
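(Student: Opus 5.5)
The plan has two parts. First, the equivalence ``$W$ is an h-cobordism iff $\theta^{f_0,f_1}_{W,F}$ is elementary'' is deduced from the one-sided statement. Second, the identification $I_W = I_{\theta^{f_0,f_1}_{W,F}}$ is checked directly on a representative $\theta^{f_0,f_1}_{U,F} = (\UM; L, V_0, V_1, p_0, p_1)$, using the homological descriptions of $L$, $V_i$ and $p_i$ from \cite[Sections 6 and 7]{csn-qfc26}.

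For the equivalence, elementarity depends only on $M = L \oplus V_0$, that is, only on the underlying quasi-formation $(\UM; L, V_0) = \theta_{U,F}$. So it suffices to quote the corresponding one-sided result of \cite{csn-qfc26}: under the standing assumptions ($F$ is $q$-connected and $\chi(M_0)=\chi(M_1)$), $\theta_{W,F}$ is elementary iff $W$ is an h-cobordism. If that result is only phrased as ``$W \setminus \interior U$ is an h-cobordism rel $\partial U \sqcup M_0$ iff $V_0 \to M/L$ is an isomorphism'', I would argue through the exact sequences of the triads, using $M = H_q(\partial U)$, $L = \Ker(H_q(\partial U) \to H_q(U))$, and $V_0 = \Image d'_0$.

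For the isomorphism, I work at the level of $U$. By Lemma \ref{lem:pr-el-equiv} d), projection along $L$ is the map $\pi : V_0 \to V_1$, and $p_i = d_i \circ (d'_i)^{-1}$, where
\[
d'_i : H_{q+1}(W \setminus \interior U, \partial U \sqcup M_i) \rightarrow H_q(\partial U), \qquad d_i : H_{q+1}(W \setminus \interior U, \partial U \sqcup M_i) \rightarrow H_q(M_i).
\]
Since $M/L \cong \Image(H_q(\partial U)\to H_q(U)) \le H_q(U) \cong H_q(W)$ (as $U \simeq \sk_q W$), the element $\pi(x) - x \in L$ means that $x \in V_0$ and $\pi(x) \in V_1$ have the same image in $H_q(W)$. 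So the key commutativity to establish is that, for $z \in H_{q+1}(W \setminus \interior U, \partial U \sqcup M_i)$, the images of $d'_i(z)$ and $d_i(z)$ in $H_q(W)$ agree up to a sign depending on $i$. Geometrically, $z$ is a chain whose boundary is $d'_i(z)$ on $\partial U$ minus $d_i(z)$ on $M_i$, and this boundary is null in $H_q(W)$. The orientation reversal used to define $V_1$ via $-W$ will contribute a sign, and I expect it to cancel with the sign in $-\UPs_1$.

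Granting this, take $x \in V_0$ with $p_0(x) = a \in H_q(M_0)$. Then $x$ and $a$ have the same image in $H_q(W)$. Also $\pi(x)$ has that same image, and $p_1(\pi(x)) = b \in H_q(M_1)$ has the same image as $\pi(x)$. Hence $b$ and $a$ agree in $H_q(W)$, i.e.\ $b = I_W(a)$. This shows $I_x = I_W$ on representatives. By Lemma \ref{lem:I-stab-free} a) it then holds for the stable class.

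The main obstacle is the sign and orientation bookkeeping for the $-W$ side, i.e.\ confirming that the identification of $p_1$ with the boundary map into $M_1$ gives exactly the inclusion-induced map into $H_q(W)$. I would trace this through the definitions in \cite[Lemma 7.3 and Theorem 7.6]{csn-qfc26}.
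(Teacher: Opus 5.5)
Your proposal is correct and follows essentially the paper's proof. The equivalence is reduced to \cite[Proposition 6.30]{csn-qfc26} in the same way. The paper glues relative cycles for $\bar{x}$, $\bar{l}$, $\bar{y}$ into a class in $H_{q+1}(W, M_0 \sqcup M_1)$ with boundary $(-x,y)$ and then uses $\Image(\alpha) = \Ker(\beta) = \Delta^*_{I_W}$; this is just the chain-level form of your chain of equalities of images in $H_q(W)$.

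There are two minor points. First, $H_q(U) \rightarrow H_q(W)$ is only surjective, not an isomorphism, but you only use that $L$ dies in $H_q(W)$. Second, the sign you worry about is automatically the same for $i=0,1$. This is because $p_i = d_i \circ (d'_i)^{-1}$ is built by the same recipe from the homology boundary maps of the analogous pairs for both $i$, and these maps do not involve the orientation of $W$. Reversing to $-W$ only changes the forms, which is where $-\UPs_1$ comes from. So whatever sign appears, it appears twice and cancels in the composite; there is nothing to cancel against $-\UPs_1$.
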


\begin{proof}
By \cite[Proposition 6.30]{csn-qfc26} $W$ is an h-cobordism if and only if $\theta_{W,F}$ is elementary, which is equivalent to $\theta^{f_0,f_1}_{W,F}$ being elementary. 

Suppose that this is the case, then $\theta^{f_0,f_1}_{W,F}$ is both elementary and proper (see Proposition \ref{prop:obstr-proper}), so $I_{\theta^{f_0,f_1}_{W,F}}$ is defined. Consider the exact sequence 
\[
\ldots \rightarrow H_{q+1}(W, M_0 \sqcup M_1) \xra{\alpha} H_q(M_0) \oplus H_q(M_1) \xra{\beta} H_q(W) \rightarrow \ldots
\]
By the definition of $I_W$, we have $\Image(\alpha) = \Ker(\beta) = \Delta^*_{I_W}$ (where $\Delta^*_{I_W}$ denotes the anti-diagonal determined by $I_W$, see \cite[Definition 2.17]{csn-qfc26}).

Now suppose that $x \in H_q(M_0)$. Since $p_0$ is surjective, there is an $x' \in V_0$ such that $x = p_0(x')$. Let $y' \in V_1$ be the image of $x'$ under projection along $L$, so that $y' = x'+l$ for some $l \in L$. Let $y = p_1(y') \in H_q(M_1)$. Then by definition $I_{\theta^{f_0,f_1}_{W,F}}(x)=y$. 

The element $l \in L$ is the image of some $\bar{l} \in H_{q+1}(U, \partial U)$. Since $x = p_0(x')$ and $y = p_1(y')$, the elements $x'-x \in H_q(\partial U \sqcup M_0)$ and $y-y' \in H_q(\partial U \sqcup M_1)$ are the boundaries of some $\bar{x} \in H_{q+1}(W \setminus \interior U, \partial U \sqcup M_0)$ and $\bar{y} \in H_{q+1}(W \setminus \interior U, \partial U \sqcup M_1)$. The sum of appropriate relative cycles representing $\bar{x}$, $\bar{l}$ and $\bar{y}$ represents a homology class in $H_{q+1}(W, M_0 \sqcup M_1)$ with boundary $y-x \in H_q(M_0 \sqcup M_1)$, showing that $(-x,y) \in \Image(\alpha) = \Delta^*_{I_W}$. Therefore $I_W(x) = y$. 
\end{proof}

\begin{prop} \label{prop:bord-hcob-elem}
$F$ is normally bordant (rel boundary) to some normal bordism $F' : W' \rightarrow B$ such that $W'$ is an h-cobordism if and only if $[\theta^{f_0,f_1}_{W,F}] \in \ell^T_{2q+1}(B,\xi,f_0,f_1)$ is elementary.
\end{prop}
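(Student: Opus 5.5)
Both directions follow from the results already established for $\theta^{f_0,f_1}_{W,F}$, and the plan mirrors the one-sided statement \cite[Proposition 6.31]{csn-qfc26}.

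For the ``only if'' direction, suppose $F$ is normally bordant (rel boundary) to $F' : W' \rightarrow B$ with $W'$ an h-cobordism. An h-cobordism between the $q$-connected maps $f_i$ is automatically a $q$-connected normal bordism, since $M_i \rightarrow W'$ is a homotopy equivalence. Hence $\theta^{f_0,f_1}_{W',F'}$ is defined. By Theorem \ref{thm:theta-bordism-inv}, $[\theta^{f_0,f_1}_{W,F}] = [\theta^{f_0,f_1}_{W',F'}]$. By Proposition \ref{prop:hcob-elem}, $\theta^{f_0,f_1}_{W',F'}$ is elementary, so the class $[\theta^{f_0,f_1}_{W,F}]$ has an elementary representative, which is what the definition of an elementary element of $\ell^T_{2q+1}$ requires. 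Here I use the standing assumption of this section that $F$ itself is $q$-connected. If $F$ were merely a normal bordism, I would first replace it by a $q$-connected one via surgery below the middle dimension, as at the start of \cite[Section 6]{csn-qfc26}.

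For the ``if'' direction, suppose $[\theta^{f_0,f_1}_{W,F}]$ has an elementary representative $x \in \gc^T_{2q+1}(B,\xi,f_0,f_1)$, so that $\theta^{f_0,f_1}_{W,F} \sim x$. Theorem \ref{thm:theta-equiv-real}, applied to the stable isomorphism class of $x$, gives a $q$-connected normal bordism $F' : W' \rightarrow B$, normally bordant to $F$ rel boundary, with $\theta^{f_0,f_1}_{W',F'} \si x$. Elementarity is invariant under stable isomorphism, so $\theta^{f_0,f_1}_{W',F'}$ is elementary. Proposition \ref{prop:hcob-elem} then says that $W'$ is an h-cobordism.

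The only step needing any care is the bookkeeping that elementarity passes along $\si$ and is respected by the realisation theorem. Elementarity depends only on $L$ and $V_0$, and Lemma \ref{lem:pr-el-equiv} together with the remark after Definition \ref{def:proper} cover its stability. The genuine content, namely realising equivalences by normal bordisms, is already contained in Theorem \ref{thm:theta-equiv-real}. I would also remark that, by Proposition \ref{prop:obstr-proper}, the realised $\theta^{f_0,f_1}_{W',F'}$ is proper. Hence $I_{W'}$ coincides with $I_x$, and this observation will be used later when prescribing the induced isomorphism.
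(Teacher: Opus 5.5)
Your proof is correct and is essentially the paper's argument, which transfers the one-sided proof (cited in the paper as \cite[Proposition 6.30]{csn-qfc26}). One direction uses bordism invariance (Theorem \ref{thm:theta-bordism-inv}) with Proposition \ref{prop:hcob-elem}, after your correct observation that an h-cobordism $F'$ is automatically $q$-connected; the other uses the realisation result (Theorem \ref{thm:theta-equiv-real}), stable invariance of elementarity, and again Proposition \ref{prop:hcob-elem}.
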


\begin{proof}
Follows the proof of \cite[Proposition 6.30]{csn-qfc26}.
\end{proof}

\section{Realising isomorphisms when $H_{q-1}(B)$ is free} \label{s:real-free}

\begin{proof}[Proof of Theorem \ref{thm:main-free}]
First we apply surgery below the middle dimension (see \cite[Proposition 4]{kreck99}) and replace $F_0$ with a $q$-connected normal bordism $F : W \rightarrow B$, which is normally bordant to $F_0$.

Note that the existence of $I$ implies that $E_q(M_0, f_0) \oplus (-E_q(M_1, f_1)^*)$ is metabolic (the anti-diagonal $\Delta^*_I$ is a lagrangian), so the conditions of \cite[Theorem 7.6]{csn-qfc26} are satisfied. We will use the notation and facts established in \cite[Theorem 7.6]{csn-qfc26}, except for writing $V_0$ for $V$, $V_1$ for $V'$, $p_0$ for $p$, etc. We will also use the notation $\UE_i = (E_i, \lambda_i, \mu_i)$ for $E_q(M_i, f_i)$. 

Since $H_{q-1}(B)$ is free, $\Tor H_q(\partial U) \cong \Tor H_{q-1}(B) \cong 0$ by \cite[Lemma 6.3]{csn-qfc26}. We also have $\Tor H_q(M_i) \cong \Tor H_{q-1}(M_i) \cong \Tor H_{q-1}(B)$ by Poincar\'e-duality and the fact that $f_i$ is $q$-connected. So all groups involved are free and there is no need to use free reductions. We have 
\[
\begin{aligned}
\theta^{f_0,f_1}_{W,F} &\si (\UM; L, V_0, V_1, p_0, p_1) \sim \\
 &\sim (\UM; L, (i_0 \oplus i_1)(\Delta^*_I) \oplus Z, (i_0 \oplus i_1)(\Delta^*_I) \oplus Z, 0, 0) \oplus (\UM; (i_0 \oplus i_1)(\Delta^*_I) \oplus Z, V_0, V_1, p_0, p_1) = \\
 &= x \oplus (\UM; (i_0 \oplus i_1)(\Delta^*_I) \oplus Z, V_0, V_1, p_0, p_1) = \\
 &= x \oplus (\UN \oplus \UN'; (i_0 \oplus i_1)(\Delta^*_I) \oplus Z, Y_0 \oplus Z, Y_1 \oplus Z, p_0, p_1) = \\
 &= x \oplus (\UN; (i_0 \oplus i_1)(\Delta^*_I), Y_0, Y_1, p_0 \big| _{Y_0}, p_1 \big| _{Y_1}) \oplus (\UN'; Z, Z, Z, 0, 0) \sim \\
 &\sim x \oplus (\UN; (i_0 \oplus i_1)(\Delta^*_I), Y_0, Y_1, p_0 \big| _{Y_0}, p_1 \big| _{Y_1}) \cong \\
 &\cong x \oplus (\UE_0 \oplus (-\UEs_1); \Delta^*_I, E_0, E_1, \id_{E_0}, \id_{E_1})
\end{aligned}
\]
where $x = [\UM; L, (i_0 \oplus i_1)(\Delta^*_I) \oplus Z, (i_0 \oplus i_1)(\Delta^*_I) \oplus Z, 0, 0] \in L_{2q+1}(B,\xi,f_0,f_1)$, and we used Theorem \ref{thm:jacobi}, the facts that $p_j \circ i_j = \id_{E_j}$ and $\Ker p_j = 0$, Lemma \ref{lem:nkk-zero} and the isomorphism $i_0 \oplus i_1$.

Let $e = (\UE_0 \oplus (-\UEs_1); \Delta^*_I, E_0, E_1, \id_{E_0}, \id_{E_1})$, so that $\theta^{f_0,f_1}_{W,F} \sim x \oplus e$. By Proposition \ref{prop:L-forget} b) $L_{2q+1}(B,\xi,f_0,f_1) \cong 0$, so $x$ is the zero element, and by Proposition \ref{prop:L-action} $x \oplus e \sim e$. 

By Definition \ref{def:proper} (or using that it is equivalent to $\theta^{f_0,f_1}_{W,F}$), the two-sided quasi-formation $e$ is proper. It is also elementary and $I_e = I$, because $\Delta^*_I$ is the anti-diagonal determined by $I$. By Theorem \ref{thm:theta-equiv-real} there is a normal bordism $F' : W' \rightarrow B$, normally bordant to $F$, such that $\theta^{f_0,f_1}_{W',F'} \si e$. By Proposition \ref{prop:hcob-elem} $W'$ is an h-cobordism and $I_{W'} = I_e = I$. 
\end{proof}

\section{Necessary conditions}

Now we move on to proving Theorems \ref{thm:main-torsion} and \ref{thm:b-c-intro}, and determining which isomorphisms $I : E_q(M_0, f_0) \rightarrow E_q(M_1, f_1)$ can be realised by h-cobordisms over $B$, without the assumption that $H_{q-1}(B)$ is free. In this section we will describe necessary conditions. First we consider systems of Q-forms. Then, in the case when $I$ is an automorphism, we also give explicit conditions on the entries of the matrix form of $I$.

\subsection{Systems of Q-forms} \label{ss:sys}

To a map $M \rightarrow B$ we can associate not only the Q-form introduced earlier, but also its analogues with $\Z_m$ coefficients, which are compatible with the integral Q-form. Below we set up the terminology and notation needed to treat these.

\begin{defin}
Let $R$ be a $\Z_m$-module for some $m \geq 2$. An \emph{extended quadratic form over $R$} is a triple
\[
\UM = (M, \lambda, \mu)
\]
where $M$ is a $\Z_m$-module, $\lambda : M \times M \rightarrow \Z_m$ is a symmetric bilinear function, and $\mu : M \rightarrow R$ is a $\Z_m$-module homomorphism. 
\end{defin}

\begin{defin} 
Let $\UM_i = (M_i, \lambda_i, \mu_i)$ be an extended quadratic form over a $\Z_m$-module $R_i$, for some $m \geq 2$ and $i = 1,2$, and let $r : R_1 \rightarrow R_2$ be a homomorphism. A \emph{morphism} $h : \UM_1 \rightarrow \UM_2$ over $r$ is a homomorphism $h : M_1 \rightarrow M_2$ of $\Z_m$-modules such that $\lambda_2(h(x),h(y)) = \lambda_1(x,y)$ and $\mu_2(h(x)) = r(\mu_1(x))$ for every $x,y \in M_1$. A \emph{morphism} between extended quadratic forms over a fixed $\Z_m$-module $R$ is a morphism over $\id_R$. 
\end{defin}

\begin{defin}
Suppose that $\UM = (M, \lambda, \mu)$ is an extended quadratic form over an abelian group $Q$. Let $m \geq 2$. The \emph{mod $m$ reduction} of $\UM$ is the extended quadratic form 
\[
\varrho_m \UM = (M \otimes \Z_m, \varrho_m \lambda, \varrho_m \mu)
\]
over the $\Z_m$-module $Q \otimes \Z_m$, where $\varrho_m \lambda(x \otimes 1, y \otimes 1) = \varrho_m(\lambda(x, y)) \in \Z_m$ for every $x,y \in M$, and $\varrho_m \mu = \mu \otimes \id_{\Z_m} : M \otimes \Z_m \rightarrow Q \otimes \Z_m$. 

If $I : \UM = (M, \lambda, \mu) \rightarrow \UMp = (M', \lambda', \mu')$ is a morphism of extended quadratic forms over an abelian group $Q$, then $\varrho_m I : \varrho_m \UM \rightarrow \varrho_m \UMp$ is the morphism of extended quadratic forms over the $\Z_m$-module $Q \otimes \Z_m$ given by $I \otimes \id_{\Z_m} : M \otimes \Z_m \rightarrow M' \otimes \Z_m$.
\end{defin}

\begin{defin}
A \emph{system of modules} is a collection $\QQ = (Q, \{ Q_m \} , \{ r_m \} )$ consisting of an abelian group $Q$, and for every integer $m \geq 2$, a $\Z_m$-module $Q_m$ and a $\Z_m$-module homomorphism $r_m : Q \otimes \Z_m \rightarrow Q_m$. 
\end{defin}

\begin{defin}
A \emph{system of extended quadratic forms} over a system of modules $\QQ = (Q, \{ Q_m \} , \{ r_m \} )$ is a collection $\MM = (\UM, \{ \UM_m \} , \{ \iota_m \} )$ consisting of an extended quadratic form $\UM$ over $Q$, and for every integer $m \geq 2$, an extended quadratic form $\UM_m$ over $Q_m$ and a morphism $\iota_m : \varrho_m \UM \rightarrow \UM_m$ over $r_m$. 
\end{defin}

\begin{defin}
Suppose that $\MM = (\UM, \{ \UM_m \} , \{ \iota_m \} )$ and $\MM' = (\UMp, \{ \UMp_m \} , \{ \iota'_m \} )$ are systems of extended quadratic forms over a system of modules $\QQ = (Q, \{ Q_m \} , \{ r_m \} )$. A \emph{morphism} $\II : \MM \rightarrow \MM'$ is a collection $\II = (I, \{ I_m \} )$ consisting of a morphism $I : \UM \rightarrow \UMp$, and for every integer $m \geq 2$, a morphism $I_m : \UM_m \rightarrow \UMp_m$ such that the diagram 
\[
\xymatrix{
\varrho_m \UM \ar[r]^-{\varrho_m I} \ar[d]_{\iota_m} & \varrho_m \UMp \ar[d]^{\iota'_m} \\
\UM_m \ar[r]^-{I_m} & \UMp_m
}
\]
which consists of morphisms over the diagram 
\[
\xymatrix{
Q \otimes \Z_m \ar[r]^-{\id} \ar[d]_{r_m} & Q \otimes \Z_m \ar[d]^{r_m} \\
Q_m \ar[r]^-{\id} & Q_m
}
\]
of $\Z_m$-modules, commutes.
\end{defin}

\begin{defin}
Suppose that $B$ is a topological space and $q$ is a positive integer. The \emph{system of modules determined by $B$} is 
\[
\HH_q(B) = (H_q(B), \{ H_q(B; \Z_m) \} , \{ r_m \} )
\]
where $r_m$ is the inclusion in the universal coefficient exact sequence
\[
0 \rightarrow H_q(B) \otimes \Z_m \xra{r_m} H_q(B; \Z_m) \rightarrow \Tor(H_{q-1}(B), \Z_m) \rightarrow 0
\]
\end{defin}

\begin{defin}
Suppose that $B$ is a topological space, $q \geq 2$ is an even integer and $m \geq 2$ is an integer. Let $M$ be a closed oriented $2q$-manifold with a map $f : M \rightarrow B$. The \emph{mod $m$ Q-form} of $f$ is the extended quadratic form
\[
E_q(M,f)_m = (H_q(M; \Z_m), \lambda_m, \mu_m)
\] 
over $H_q(B; \Z_m)$, where $\lambda_m : H_q(M; \Z_m) \times H_q(M; \Z_m) \rightarrow \Z_m$ is the mod $m$ intersection form of $M$ and $\mu_m = H_q(f; \Z_m) : H_q(M; \Z_m) \rightarrow H_q(B; \Z_m)$. 
\end{defin}

It follows from Poincar\'e-duality that $\lambda_m$ is nonsingular, in the sense that its adjoint is an isomorphism $\ad \lambda_m : H_q(M; \Z_m) \rightarrow \Hom(H_q(M; \Z_m), \Z_m)$.

\begin{defin}
Suppose that $B$ is a topological space and $q \geq 2$ is an even integer. Let $M$ be a closed oriented $2q$-manifold with a map $f : M \rightarrow B$. The \emph{system of Q-forms} of $f$ is the system of extended quadratic forms
\[
\EE_q(M,f) = (E_q(M,f), \{ E_q(M,f)_m \} , \{ \iota_m \} )
\] 
over $\HH_q(B)$, where $\iota_m : H_q(M) \otimes \Z_m \rightarrow H_q(M; \Z_m)$ is the inclusion.
\end{defin}

\begin{prop} \label{prop:ind-sys}
Suppose that $B$ is a topological space and $q \geq 2$ is an even integer. Let $M_i$ be a closed oriented $2q$-manifold with a map $f_i : M_i \rightarrow B$, for $i = 0,1$. Then an orientation-preserving homotopy equivalence $h : M_0 \rightarrow M_1$ over $B$ (ie.\ $f_1 \circ h \simeq f_0$) induces an isomorphism $\II : \EE_q(M_0,f_0) \rightarrow \EE_q(M_1,f_1)$ between the systems of Q-forms of $M_0$ and $M_1$, namely $\II = (H_q(h), \{ H_q(h; \Z_m) \} )$.
\qed
\end{prop}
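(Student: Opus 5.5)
The argument is pure functoriality, so I will verify the definition of a morphism of systems piece by piece. Set $I = H_q(h) : H_q(M_0) \rightarrow H_q(M_1)$ and $I_m = H_q(h; \Z_m)$ for $m \geq 2$.

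First, since $h$ is a homotopy equivalence, $I$ and every $I_m$ are isomorphisms, with inverses induced by a homotopy inverse of $h$. Compatibility with the maps to $B$ follows from $f_1 \circ h \simeq f_0$. Homotopic maps induce equal maps in homology with any coefficients, so
\[
H_q(f_1) \circ I = H_q(f_0) \quad\text{and}\quad H_q(f_1;\Z_m) \circ I_m = H_q(f_0;\Z_m).
\]
Thus $I$ and $I_m$ preserve $\mu$ and $\mu_m$.

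Next I check the intersection forms. Because $h$ is an orientation-preserving homotopy equivalence of closed oriented manifolds, $H_{2q}(h)$ sends $[M_0]$ to $[M_1]$. I write $\lambda_{M}(x,y) = \langle PD^{-1}(x) \cup PD^{-1}(y), [M] \rangle$, where $PD = - \cap [M]$, and do the same with $\Z_m$ coefficients. Naturality of the cap product then gives
\[
h_*(h^*(\alpha) \cap [M_0]) = \alpha \cap h_*[M_0] = \alpha \cap [M_1].
\]
Hence $h_* \circ PD_{M_0} \circ h^* = PD_{M_1}$. Since $h^*$ is an isomorphism, this yields $PD_{M_1}^{-1} \circ h_* = (h^*)^{-1} \circ PD_{M_0}^{-1}$. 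Cup products are natural, so $\lambda_{M_1}(h_* x, h_* y) = \langle (h^*)^{-1}(a \cup b), h_*[M_0] \rangle = \langle a \cup b, [M_0] \rangle = \lambda_{M_0}(x,y)$, where $a = PD_{M_0}^{-1}(x)$ and $b = PD_{M_0}^{-1}(y)$. The same argument works verbatim with $\Z_m$ coefficients, so each $I_m$ is an isomorphism of the mod $m$ Q-forms.

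Finally, the squares involving $\iota_m$ commute. This holds because the map $H_q(-) \otimes \Z_m \rightarrow H_q(-;\Z_m)$ in the universal coefficient sequence is natural, so $\iota_m \circ (I \otimes \id_{\Z_m}) = I_m \circ \iota_m$. On the coefficient level the required diagram uses identities, and $\mu$ compatibility was already checked. Inverses are handled either by applying the same construction to a homotopy inverse of $h$ (which is again orientation-preserving and over $B$), or simply because the components are bijective morphisms of forms. Therefore $\II$ is an isomorphism of systems.

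None of these steps is a real obstacle. The only point needing care is the Poincaré duality naturality, specifically that $h_*[M_0] = [M_1]$ when $h$ is orientation-preserving.
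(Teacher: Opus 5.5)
Your proposal is correct. The paper gives no proof of this proposition and regards it as immediate from functoriality. Your verification is exactly the routine check it leaves to the reader: homotopy invariance gives compatibility with $\mu$ and $\mu_m$, naturality of cap and cup products together with $h_*[M_0]=[M_1]$ gives preservation of $\lambda$ and $\lambda_m$, and naturality of the universal coefficient inclusion makes the $\iota_m$-squares commute.
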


\subsection{Integral conditions} \label{ss:int}

Suppose that $B$ is a topological space and $q \geq 2$ is an even integer. Let $M$ be a closed oriented $2q$-manifold with a map $f : M \rightarrow B$. Let $Q = H_q(B)$, and let $E_q(M,f) = (H_q(M), \lambda, \mu)$ be the Q-form of $f$. Let $\tp = \Tor H_q(M)$ and $\fp = H_q(M) / \Tor H_q(M)$ be the torsion part and free part of $H_q(M)$ respectively, and fix a splitting $H_q(M) = \fp \oplus \tp$. 

Assume that $\mu \big| _{\tp} = 0$. Then $E_q(M,f)$ has a free reduction, which we will denote by $\overline{E_q(M,f)} = (\fp, \lambda \big| _{\fp \times \fp}, \mu \big| _{\fp})$. Also, the map $\mu \big| _{\fp} : \fp \rightarrow Q$ is independent of the choice of splitting of $H_q(M)$. 

If $I$ is an automorphism of $E_q(M,f)$, then it can be written in the form $I = \bigl[ \begin{smallmatrix} a & 0 \\ b & c \end{smallmatrix} \bigr] : \fp \oplus \tp \rightarrow \fp \oplus \tp$ for some automorphism $a$ of $\overline{E_q(M,f)}$, homomorphism $b : \fp \rightarrow \tp$ and automorphism $c$ of $\tp$. By Proposition \ref{prop:ind-sys}, if $I$ can be realised by an h-cobordism, then it has to extend to an automorphism $\II$ of $\EE_q(M,f)$. We will describe necessary conditions for the existence of such an $\II$ in terms of $I$.

\begin{defin} \label{def:can}
Consider the isomorphisms $\Hom(\fp, Q) \cong \Hom(\fp^*, Q) \cong \fp \otimes Q$. The first one is given by the isomorphism $\fp \cong \fp^*$ induced by $\ad \lambda$, and the inverse of the second one is given by $x \otimes q \mapsto (\varphi \mapsto \varphi(x)q)$ for $x \in \fp$, $q \in Q$ and $\varphi \in \fp^*$. The elements corresponding to $\mu \big| _{\fp} \in \Hom(\fp, Q)$ will be denoted by $\tilde{\mu} \in \Hom(\fp^*, Q)$ and $\hat{\mu} \in \fp \otimes Q$.
\end{defin}

\begin{thm} \label{thm:b-mu}
Suppose that $H_{q-1}(f) : H_{q-1}(M) \rightarrow H_{q-1}(B)$ is an isomorphism. Let $b : \fp \rightarrow \tp$ be a homomorphism and $I = \bigl[ \begin{smallmatrix} \id & 0 \\ b & \id \end{smallmatrix} \bigr] : \fp \oplus \tp \rightarrow \fp \oplus \tp$. If $I$ extends to an automorphism $\II$ of $\EE_q(M,f)$, then $(b \otimes \id_Q)(\hat{\mu}) = 0 \in \tp \otimes Q$. 
\end{thm}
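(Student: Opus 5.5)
The plan is to take $m$ to be the exponent of $\tp$ (or any multiple of it), use only the mod $m$ component $I_m$ of $\II$, and turn the conditions that $I_m$ preserves $\lambda_m$ and $\mu_m$ into the vanishing of $(b \otimes \id_Q)(\hat{\mu})$, tested against enough functionals. With this choice $\tp \otimes \Z_m = \tp$. The universal coefficient sequence
\[
0 \rightarrow H_q(M) \otimes \Z_m \xra{\iota_m} H_q(M;\Z_m) \xra{\beta} \Tor(H_{q-1}(M),\Z_m) \rightarrow 0
\]
has third term naturally identified with $\Tor H_{q-1}(M)$. Via the torsion linking pairing this is $\Hom(\tp,\Q/\Z) = \Hom(\tp,\Z_m)$.

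\emph{Step 1: $I_m$ induces the identity on the quotient.} The sequence above is natural in $f$, and $H_{q-1}(f)$ is an isomorphism, so $\beta$ factors through $\mu_m$ followed by the corresponding map $H_q(B;\Z_m) \rightarrow \Tor(H_{q-1}(B),\Z_m)$, up to this isomorphism. Since $\mu_m \circ I_m = \mu_m$, we get $\beta \circ I_m = \beta$. Hence for every $y \in H_q(M;\Z_m)$ we can write $I_m(y) = y + \iota_m(z_y)$ with $z_y \in H_q(M) \otimes \Z_m$. Applying $\mu_m$ again gives $r_m((\mu \otimes \id)(z_y)) = 0$. As $r_m$ is injective, $(\mu \otimes \id_{\Z_m})(z_y) = 0$ in $Q \otimes \Z_m$. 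Because $\mu|_{\tp}=0$, this only involves the $\fp$-component of $z_y$.

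\emph{Step 2: use that $I_m$ preserves $\lambda_m$.} Take $x \in \fp$. Compatibility gives $I_m(\iota_m(x \otimes 1)) = \iota_m((x + b(x)) \otimes 1)$. Expand $\lambda_m(I_m \iota_m x, I_m y) = \lambda_m(\iota_m x, y)$, using that $\lambda_m$ restricted to $\iota_m(\cdot)$ is the mod $m$ reduction of $\lambda$ and that $\lambda$ vanishes on $\tp$. This yields
\[
\lambda_m(\iota_m b(x), y) = -\varrho_m \lambda(x, z_y).
\]
I would then check the standard fact that $\lambda_m(\iota_m t, y)$, for $t \in \tp$, equals (up to a universal sign) $m \cdot \mathrm{lk}(t, \beta y) \in \Z_m$. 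Thus the left side is $\ell_y(b(x))$, where $\ell_y \in \Hom(\tp,\Z_m)$ is the functional corresponding to $\beta y$. Every element of $\Hom(\tp,\Z_m)$ arises as some $\ell_y$, since $\beta$ is onto.

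\emph{Step 3: conclude.} Choose a basis $e_i$ of $\fp$ and let $e_i^*$ be the $\lambda$-dual basis, so that $\hat{\mu} = \sum_i e_i^* \otimes \mu(e_i)$. Apply $\ell_y \otimes \id_Q : \tp \otimes Q \rightarrow \Z_m \otimes Q$ to $(b \otimes \id_Q)(\hat{\mu})$. By Step 2 the result is $-\sum_i \lambda(e_i^*, z_y) \otimes \mu(e_i) = -(\mu \otimes \id)(z_y^{\fp})$, which vanishes by Step 1. It remains to see that the maps $\ell \otimes \id_Q$, over all $\ell \in \Hom(\tp,\Z_m)$, jointly detect $\tp \otimes Q$. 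Decompose $\tp$ into cyclic summands $\Z_{d}$ with $d \mid m$. For each summand, the projection followed by $\Z_d \hookrightarrow \Z_m$ (multiplication by $m/d$) gives a map $\Z_d \otimes Q \rightarrow \Z_m \otimes Q$. This map is injective, because $Q/dQ \rightarrow Q/mQ$, $x \mapsto (m/d)x$, is injective. Hence $(b \otimes \id_Q)(\hat{\mu}) = 0$.

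I expect the main obstacle to be Step 2's identification of the mod $m$ intersection pairing between $\iota_m(\tp)$ and a general class with the linking pairing against its Bockstein, with correct signs and normalisation. The identification in Step 1 of $\beta$ with the map induced through $\mu_m$ also needs care. Everything else is formal.
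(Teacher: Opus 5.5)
Your Steps 1 and 2 are correct and are essentially the paper's argument. $\beta \circ I_m = \beta$ gives $I_m(y) = y + \iota_m(z_y)$ with $(\mu \otimes \id)(z_y) = 0$, and invariance of $\lambda_m$ turns $b$ into a pairing against $z_y$. You don't need the linking-form identification you flag as the main obstacle. Since $\lambda_m$ is nonsingular and $\iota_m(\tp \otimes \Z_m)$ is a direct summand of $H_q(M;\Z_m)$, every homomorphism $\tp \rightarrow \Z_m$ has the form $t \mapsto \lambda_m(\iota_m(t \otimes 1), y)$, and this holds for every $m \geq 2$. This is how the paper produces its class $x$.

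The genuine gap is in Step 3. The theorem does not assume that $Q = H_q(B)$ is free; it only assumes $\mu|_{\tp} = 0$. Your claim that $Q/dQ \rightarrow Q/mQ$, $x \mapsto (m/d)x$, is injective fails when $Q$ has torsion: for $Q = \Z_2$, $d = 2$, $m = 4$ it is the zero map. As a result, a single $m$ cannot detect $\tp \otimes Q$. Take $\tp = \Z_2 \oplus \Z_4$, $Q = \Z_2$, and let $g$ generate the $\Z_2$ summand. For any $m$ with $4 \mid m$, every $\ell \in \Hom(\tp,\Z_m)$ has $\ell(g) \in \{0, m/2\}$. Since $(m/2) \otimes 1 = 0$ in $\Z_m \otimes \Z_2$, the element $g \otimes 1$ lies in the kernel of all your test maps $\ell \otimes \id_Q$.

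The repair is to run Steps 1--2 for every $m \geq 2$, not only multiples of the exponent. This works with the nonsingularity argument above. It also works with your linking argument, using that the $m$-torsion of $\Hom(\tp,\mathbb{Q}/\Z)$ is $\Hom(\tp,\Z_m)$. Then, for each cyclic summand $\Z_d$ of $\tp$, take $m = d$ and $\ell$ the projection onto that summand, so that $\ell \otimes \id_Q$ restricts to the identity on $\Z_d \otimes Q$. This is why the paper quantifies over all $m$ and all $\xi : \tp \rightarrow \Z_m$, $\eta : Q \rightarrow \Z_m$.

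When $Q$ is free, which is the case needed in Theorem \ref{thm:real-aut-diff}, your Step 3 is correct as written.
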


\begin{proof}
We will prove that $(\xi \otimes \eta) \circ (b \otimes \id_Q)(\hat{\mu}) = 0$ for any integer $m \geq 2$ and homomorphisms $\xi : \tp \rightarrow \Z_m$ and $\eta : Q \rightarrow \Z_m$. This suffices, because $\Hom(\tp \otimes Q, \Z_m) \cong \Hom(\tp, \Z_m) \otimes \Hom(Q, \Z_m)$, and every non-trivial element of a finitely generated abelian group $X$ is detected by a homomorphism $X \rightarrow \Z_m$ for some $m$. Fix an $m$, $\xi$ and $\eta$. 

Consider the diagram between the universal coefficient exact sequences induced by $f$. 
\[
\xymatrix{
0 \ar[r] & H_q(M) \otimes \Z_m \ar[r]^-{\iota_m} \ar[d]_{\mu \otimes \id} & H_q(M; \Z_m) \ar[r]^-{\pi^M_m} \ar[d]_{\mu_m} & \Tor(H_{q-1}(M), \Z_m) \ar[r] \ar[d]^{\cong} & 0 \\
0 \ar[r] & H_q(B) \otimes \Z_m \ar[r]^-{r_m} & H_q(B; \Z_m) \ar[r]^-{\pi^B_m} & \Tor(H_{q-1}(B), \Z_m) \ar[r] & 0
}
\]

The composition 
\[
H_q(M; \Z_m) \xra{\ad \lambda_m} \Hom(H_q(M; \Z_m), \Z_m) \xra{\iota_m^*} \Hom(H_q(M) \otimes \Z_m, \Z_m) \rightarrow \Hom(\tp \otimes \Z_m, \Z_m)
\] 
is surjective, because $\ad \lambda_m$ is an isomorphism, and $\tp \otimes \Z_m$ is a summand in $H_q(M) \otimes \Z_m$, which is a summand in $H_q(M; \Z_m)$ (by the splitting of the universal coefficient exact sequence). So there is an $x \in H_q(M; \Z_m)$ such that $\xi = \iota_m^* \circ (\ad \lambda_m)(x) \big| _{\tp \otimes \Z_m} \in \Hom(\tp, \Z_m) \cong \Hom(\tp \otimes \Z_m, \Z_m)$. 

Let $\II = (I, \{ I_m \} )$ be an automorphism of $\EE_q(M,f)$ that extends $I$. Since $I_m : H_q(M; \Z_m) \rightarrow H_q(M; \Z_m)$ is an automorphism of $E_q(M,f)_m$, we have $\mu_m = \mu_m \circ I_m$, hence $\pi^B_m \circ \mu_m = \pi^B_m \circ \mu_m \circ I_m$, hence $\pi^M_m = \pi^M_m \circ I_m$. In particular, $\pi^M_m(x) = \pi^M_m \circ I_m(x)$, so $I_m(x) - x \in \Ker(\pi^M_m) = \Image(\iota_m)$. Let $\alpha \in H_q(M) \otimes \Z_m$ be an element such that $\iota_m(\alpha) = I_m(x) - x$. Now $\mu_m(x) = \mu_m \circ I_m(x) = \mu_m(x) + \mu_m \circ \iota_m(\alpha) = \mu_m(x) + r_m \circ (\mu \otimes \id)(\alpha)$, and $r_m$ is injective, therefore $(\mu \otimes \id)(\alpha) = 0$. 

For every $y \in \fp \leq H_q(M)$ we have $\lambda_m(x, \iota_m(y \otimes 1)) = \lambda_m(I_m(x), I_m \circ \iota_m(y \otimes 1))$. Here $I_m(x) = x + \iota_m(\alpha)$ and $I_m \circ \iota_m(y \otimes 1) = \iota_m \circ (\varrho_m I)(y \otimes 1) = \iota_m \circ (I \otimes \id_{\Z_m})(y \otimes 1) = \iota_m(I(y) \otimes 1) = \iota_m((y+b(y)) \otimes 1) = \iota_m(y \otimes 1) + \iota_m(b(y) \otimes 1)$, using that $\II$ extends $I$. Thus the difference is $\lambda_m(\iota_m(\alpha), \iota_m(y \otimes 1)) + \lambda_m(\iota_m(\alpha), \iota_m(b(y) \otimes 1)) + \lambda_m(x, \iota_m(b(y) \otimes 1)) = 0$. 

Since $\iota_m : \varrho_m E_q(M,f) \rightarrow E_q(M,f)_m$ is a morphism, $\lambda_m(\iota_m(\alpha), \iota_m(y \otimes 1)) = (\varrho_m \lambda)(\alpha, y \otimes 1)$. We have $\lambda_m(\iota_m(\alpha), \iota_m(b(y) \otimes 1)) = (\varrho_m \lambda)(\alpha, b(y) \otimes 1) = 0$, because $b(y) \in \tp$ and $\tp \leq H_q(M)$ is the radical of $\lambda$. Finally, $\lambda_m(x, \iota_m(b(y) \otimes 1)) = \xi(b(y))$ by the definition of $x$. Therefore $(\varrho_m \lambda)(\alpha, y \otimes 1) = -\xi \circ b(y)$ for every $y \in \fp$. 

Since $H_q(M) = \fp \oplus \tp$, also $H_q(M) \otimes \Z_m = \fp \otimes \Z_m \oplus \tp \otimes \Z_m$, hence $\alpha = \alpha' + \alpha''$ for some $\alpha' \in \fp \otimes \Z_m$ and $\alpha'' \in \tp \otimes \Z_m$. Since $\alpha''$ is in the radical of $\varrho_m \lambda$, we have $(\varrho_m \lambda)(\alpha', y \otimes 1) = (\varrho_m \lambda)(\alpha, y \otimes 1) = -\xi \circ b(y)$ for every $y \in \fp$. Since $\fp$ is the free part of $H_q(M)$ and $\lambda$ is nonsingular, the adjoint of $\varrho_m \lambda$ determines an isomorphism between $\fp \otimes \Z_m$ and $\Hom(\fp \otimes \Z_m, \Z_m) \cong \Hom(\fp, \Z_m)$. By the previous equality, $\alpha' \in \fp \otimes \Z_m$ corresponds to $-\xi \circ b \in \Hom(\fp, \Z_m)$.

Let $\mu_{\eta} = \eta \circ \mu \big| _{\fp} : \fp \rightarrow \Z_m$. Analogously to Definition \ref{def:can}, we consider the isomorphisms $\Hom(\fp \otimes \Z_m, \Z_m) \cong \Hom(\Hom(\fp, \Z_m), \Z_m) \cong \fp \otimes \Z_m$, where the first one is induced by the isomorphism $\ad (\varrho_m \lambda) : \fp \otimes \Z_m \cong \Hom(\fp, \Z_m)$. The elements corresponding to $\mu_{\eta} \in \Hom(\fp, \Z_m) \cong \Hom(\fp \otimes \Z_m, \Z_m)$ will be denoted by $\tilde{\mu}_{\eta} \in \Hom(\Hom(\fp, \Z_m), \Z_m)$ and $\hat{\mu}_{\eta} \in \fp \otimes \Z_m$. In particular, $\hat{\mu}_{\eta} = (\id_{\fp} \otimes \eta)(\hat{\mu})$.

We saw earlier that $(\mu \otimes \id)(\alpha) = 0$. Since $\mu \big| _{\tp} = 0$, this implies that $(\mu \otimes \id)(\alpha') = 0$, and hence $(\mu_{\eta} \otimes \id)(\alpha') = 0$. Using the correspondences determined by $\ad (\varrho_m \lambda)$, this means that $\tilde{\mu}_{\eta}(-\xi \circ b) = 0$, hence $\tilde{\mu}_{\eta}(\xi \circ b) = 0$. By the definition of $\hat{\mu}_{\eta}$, this is equivalent to $((\xi \circ b) \otimes \id_{\Z_m})(\hat{\mu}_{\eta}) = 0$. Therefore $0 = ((\xi \circ b) \otimes \id_{\Z_m})(\hat{\mu}_{\eta}) = ((\xi \circ b) \otimes \id_{\Z_m}) \circ (\id_{\fp} \otimes \eta)(\hat{\mu}) = ((\xi \circ b) \otimes \eta)(\hat{\mu}) = (\xi \otimes \eta) \circ (b \otimes \id_Q)(\hat{\mu})$, as required. 
\end{proof}

\begin{prop} \label{prop:c-id}
Suppose that $H_{q-1}(f) : H_{q-1}(M) \rightarrow H_{q-1}(B)$ is an isomorphism. Let $I = \bigl[ \begin{smallmatrix} a & 0 \\ b & c \end{smallmatrix} \bigr] : \fp \oplus \tp \rightarrow \fp \oplus \tp$ be an automorphism of $E_q(M,f)$. If $I$ extends to an automorphism $\II$ of $\EE_q(M,f)$, then $c = \id_{\tp}$. 
\end{prop}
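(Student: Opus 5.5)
The plan is to work with a single mod $m$ Q-form, where $m$ is the exponent of $\tp$ (say $m \geq 2$; if $\tp = 0$ there is nothing to prove). For this $m$ the map $\tp \rightarrow \tp \otimes \Z_m$, $t \mapsto t \otimes 1$, is an isomorphism. So it will suffice to show that $\iota_m((c(t)-t) \otimes 1) = 0$ for every $t \in \tp$, since $\iota_m$ is injective. To get this I will use the nonsingularity of $\lambda_m$ and show that $\iota_m((c(t)-t) \otimes 1)$ pairs trivially with every element of $H_q(M;\Z_m)$.

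\textbf{Step 1: the extension preserves $\pi^M_m$.} Let $\II = (I, \{ I_m \})$ extend $I$. As in the proof of Theorem \ref{thm:b-mu}, use the commutative diagram of universal coefficient sequences, whose right vertical map $\Tor(H_{q-1}(M),\Z_m) \rightarrow \Tor(H_{q-1}(B),\Z_m)$ is an isomorphism because $H_{q-1}(f)$ is. From $\mu_m \circ I_m = \mu_m$ we get $\pi^B_m \circ \mu_m \circ I_m = \pi^B_m \circ \mu_m$, and hence $\pi^M_m \circ I_m = \pi^M_m$.

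\textbf{Step 2: pairing against $\tp$ only sees $\pi^M_m$.} Fix $t \in \tp$ and consider $x \mapsto \lambda_m(x, \iota_m(t \otimes 1))$. This vanishes on $\Image \iota_m = \Ker \pi^M_m$. Indeed, for $\alpha \in H_q(M)\otimes\Z_m$ we have $\lambda_m(\iota_m\alpha, \iota_m(t\otimes 1)) = (\varrho_m\lambda)(\alpha, t\otimes 1) = 0$, because $\iota_m$ is a morphism and $\tp$ is the radical of $\lambda$. Hence $\lambda_m(x, \iota_m(t\otimes 1))$ depends only on $\pi^M_m(x)$.

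\textbf{Step 3: comparing $c(t)$ with $t$.} Because the upper right entry of the matrix of $I$ is $0$, we have $I(t) = c(t)$ for $t \in \tp$. Therefore
\[
I_m(\iota_m(t\otimes 1)) = \iota_m(I(t)\otimes 1) = \iota_m(c(t)\otimes 1).
\]
Given any $x \in H_q(M;\Z_m)$, write $x = I_m(x')$. Then
\[
\lambda_m(x, \iota_m(c(t)\otimes 1)) = \lambda_m(I_m x', I_m\iota_m(t\otimes1)) = \lambda_m(x', \iota_m(t\otimes 1)).
\]
By Step 1, $\pi^M_m(x') = \pi^M_m(x)$, so by Step 2 the last expression equals $\lambda_m(x, \iota_m(t\otimes1))$. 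Consequently $\iota_m((c(t)-t)\otimes 1)$ lies in the radical of $\lambda_m$. Since $\ad\lambda_m$ is an isomorphism, it is zero. By injectivity of $\iota_m$ and the choice of $m$, we conclude $c(t) = t$.

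The only point needing care is Step 1, specifically that $\pi^M_m$ is intertwined with $\pi^B_m \circ \mu_m$ via an isomorphism. This is exactly where the hypothesis on $H_{q-1}(f)$ enters, and the argument is the same as the one already given in the proof of Theorem \ref{thm:b-mu}.
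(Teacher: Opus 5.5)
Your proof is correct and uses the same ingredients as the paper's proof: $\pi^M_m \circ I_m = \pi^M_m$ (which is where the hypothesis on $H_{q-1}(f)$ enters), the fact that $\Image \iota_m$ pairs trivially with $\iota_m(\tp \otimes \Z_m)$, the invariance of $\lambda_m$ under $I_m$, and the nonsingularity of $\lambda_m$. The only difference is how $c(z)-z$ is detected: the paper realises every homomorphism $\xi : \tp \to \Z_m$, for every $m$, by pairing with a suitable $x$, whereas you fix $m$ to be the exponent of $\tp$ and use that $\lambda_m$ has trivial radical, together with the injectivity of $\tp \otimes \Z_m \to H_q(M;\Z_m)$ (which holds because $\tp$ is a direct summand of $H_q(M)$).
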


\begin{proof}
We will prove that $\xi(c(z) - z) = 0$ for every integer $m \geq 2$, homomorphism $\xi : \tp \rightarrow \Z_m$ and $z \in \tp$. Define $x$ and $\alpha$ in the same way as in the proof of Theorem \ref{thm:b-mu}.

We have $\lambda_m(x, \iota_m(z \otimes 1)) = \lambda_m(I_m(x), I_m \circ \iota_m(z \otimes 1))$. Here $I_m(x) = x + \iota_m(\alpha)$ and $I_m \circ \iota_m(z \otimes 1) = \iota_m \circ (\varrho_m I)(z \otimes 1) = \iota_m \circ (I \otimes \id_{\Z_m})(z \otimes 1) = \iota_m(I(z) \otimes 1) = \iota_m(c(z) \otimes 1)$, using that $\II$ extends $I$. So $0 = \lambda_m(x + \iota_m(\alpha), \iota_m(c(z) \otimes 1)) - \lambda_m(x, \iota_m(z \otimes 1)) = \lambda_m(\iota_m(\alpha), \iota_m(c(z) \otimes 1)) + \lambda_m(x, \iota_m((c(z) - z) \otimes 1))$. Now $\lambda_m(\iota_m(\alpha), \iota_m(c(z) \otimes 1)) = (\varrho_m \lambda)(\alpha, c(z) \otimes 1) = 0$, because $c(z) \in \tp$. Thus $0 = \lambda_m(x, \iota_m((c(z) - z) \otimes 1)) = \xi(c(z) - z)$, as required. 
\end{proof}

\begin{rem} \label{rem:tor}
It follows from Propositions \ref{prop:ind-sys} and \ref{prop:c-id} that if $I$ is induced by a homotopy automorphism of $M$ over $B$, then $I \big| _{\tp} = \id_{\tp}$. We can also prove this directly, without using the existence of $\II$ as an intermediate condition. In fact, we will consider the more general situation of an isomorphism $I : E_q(M_0, f_0) \rightarrow E_q(M_1, f_1)$, and show that if $I$ is realised by a homotopy equivalence $M_0 \rightarrow M_1$ over $B$, then $I \big| _{\Tor H_q(M_0)}$ has to be a certain fixed map, depending only on $f_0$ and $f_1$ (assuming that $H_{q-1}(f_i)$ is an isomorphism). 

Let $\Phi_i : \Ext(H_{q-1}(B),\Z) \rightarrow \Ext(H_{q-1}(M_i),\Z) \rightarrow \Ext(H^{q+1}(M_i),\Z) \rightarrow \Tor H_q(M_i)$ be the composition of the isomorphisms induced by $H_{q-1}(f_i)$, Poincar\'e-duality and the universal coefficient formula (applied for computing the integral homology of $M_i$ from its cohomology). Then a homotopy equivalence over $B$ induces a commutative diagram
\[
\xymatrix{
\Ext(H_{q-1}(B),\Z) \ar[r]^-{\cong} \ar@{=}[d] & \Ext(H_{q-1}(M_0),\Z) \ar[r]^-{\cong} & \Ext(H^{q+1}(M_0),\Z) \ar[r]^-{\cong} \ar[d] & \Tor H_q(M_0) \ar[d]^-{I} \\
\Ext(H_{q-1}(B),\Z) \ar[r]^-{\cong} & \Ext(H_{q-1}(M_1),\Z) \ar[r]^-{\cong} \ar[u] & \Ext(H^{q+1}(M_1),\Z) \ar[r]^-{\cong} & \Tor H_q(M_1) 
}
\]
between the two compositions, showing that if $I : E_q(M_0, f_0) \rightarrow E_q(M_1, f_1)$ is the induced isomorphism, then $I \big| _{\Tor H_q(M_0)} = \Phi_1 \circ \Phi_0^{-1}$. 
\end{rem}

\begin{ex} \label{ex:i-ext}
Suppose that $q > 2$ is even. Let $M_1$ be a $(q{-}1)$-connected $2q$-manifold, and denote the free group $H_q(M_1)$ by $\fp$. Let $\tp$ be a torsion group and define the $2q$-manifold $M_2$ as the boundary of the $(2q+1)$-dimensional trivial thickening of the Moore-space $M(\tp,q-1)$ (which can be constructed as a regular neighbourhood of an embedding $M(\tp,q-1) \rightarrow \R^{2q+1}$). Then $H_{q-1}(M_2) \cong H_q(M_2) \cong \tp$. Let $M = M_1 \# M_2$, then $H_q(M) \cong H_q(M_1) \oplus H_q(M_2) \cong \fp \oplus \tp$. By the Hurewicz-theorem the map $\pi_q(M_2) \rightarrow H_q(M_2)$ is surjective. Define $B$ by adding $(q+1)$-cells to $M$, glued along maps $S^q \rightarrow M_2 \setminus D^{2q} \rightarrow M$ representing generators of $H_q(M_2) \leq H_q(M)$, so that $H_q(B) \cong \fp$ is free. Let $f : M \rightarrow B$ be the inclusion, which is $q$-connected. (Since $M_2$ is stably parallelisable, there is a stable bundle $\xi$ over $B$ that restricts to $\nu_M$, so $f$ could be covered by a bundle map.)

Let $I = \bigl[ \begin{smallmatrix} a & 0 \\ b & c \end{smallmatrix} \bigr] : \fp \oplus \tp \rightarrow \fp \oplus \tp$ be an automorphism of $E_q(M,f)$. Then $a = \id_F$ (because $H_q(f) \big| _{\fp}$ is injective), while $b$ can be any homomorphism $\fp \rightarrow \tp$ and $c$ can be any automorphism of $\tp$. Since $\mu \big| _{\fp} = H_q(f) \big| _{\fp} : \fp \rightarrow H_q(B) = Q$ is an isomorphism, the same is true for $\tilde{\mu} : \fp^* \rightarrow Q$, and it follows that $(b \otimes \id_Q)(\hat{\mu}) = 0 \in \tp \otimes Q$ holds only for $b = 0$. So by Theorem \ref{thm:b-mu} and Proposition \ref{prop:c-id} the only automorphism $I$ that extends to an automorphism $\II$ of $\EE_q(M,f)$ (and can be realised by a diffeomorphism of $M$ over $B$) is $I = \id_{H_q(M)}$. 

This construction can be modified, by adding further $(q+1)$-cells to $B$, glued along generators of a direct summand $\fp' < \fp$, so that $H_q(B) \cong \fp / \fp'$. Then some, but not all, non-trivial homomorphisms $b : \fp \rightarrow \tp$ also satisfy $(b \otimes \id_Q)(\hat{\mu}) = 0$. 
\end{ex}

\section{Realising automorphisms}

In this section we show that the conditions established in Section \ref{ss:int} are sufficient for realising an automorphism $I$. First we consider the algebraic setting, and show that $I$ is induced by a specially constructed elementary proper two-sided quasi-formation. Then we prove that there is a normal bordism whose obstruction is that particular two-sided quasi-formation, and hence it is an h-cobordism inducing $I$.

\subsection{Realisation by two-sided quasi-formations}

We fix an abelian group $Q$, a homomorphism $v : Q \rightarrow \Z_2$, and an extended quadratic form $\UP = (P, \lambda_P, \mu_P)$ over $(Q,v)$. Let $\tp = \Tor P$ and $\fp = P / \Tor P$, and fix a splitting $P = \fp \oplus \tp$. 
Assume that $\mu_P \big| _{\tp} = 0$, then we can define the free reduction $\UbP = (\fp, \bar{\lambda}_P, \bar{\mu}_P)$ of $\UP$. 
Assume also that $\UP$ is nonsingular, and define the elements $\tilde{\mu}_P \in \Hom(\fp^*, Q)$ and $\hat{\mu}_P \in \fp \otimes Q$ corresponding to $\mu_P \big| _{\fp} = \bar{\mu}_P \in \Hom(\fp, Q)$ as in Definition \ref{def:can}.

\begin{lem} \label{lem:b-lift}
Let $0 \rightarrow K \xra{i} Z \xra{p} \tp \rightarrow 0$ be a free resolution of $\tp$, and let $b : \fp \rightarrow \tp$ be a homomorphism. Suppose that $\mu_P : P \rightarrow Q$ is surjective. If $(b \otimes \id_Q)(\hat{\mu}_P) = 0 \in \tp \otimes Q$, then there is a homomorphism $\bar{b} : \fp \rightarrow Z$ such that $p \circ \bar{b} = b$ and $(\bar{b} \otimes \id_Q)(\hat{\mu}_P) = 0 \in Z \otimes Q$.
\end{lem}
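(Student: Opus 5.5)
The plan is to start from an arbitrary lift of $b$ and then correct it by a homomorphism into $K$. The correction will be chosen using the surjectivity of $\mu_P$ and the nonsingularity of $\bar{\lambda}_P$ on $\fp$.

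First, since $\fp$ is free, there is some homomorphism $b_0 : \fp \rightarrow Z$ with $p \circ b_0 = b$. Consider the element $(b_0 \otimes \id_Q)(\hat{\mu}_P) \in Z \otimes Q$. Its image under $p \otimes \id_Q$ is $(b \otimes \id_Q)(\hat{\mu}_P) = 0$. Tensoring the resolution with $Q$ gives the right exact sequence $K \otimes Q \rightarrow Z \otimes Q \rightarrow \tp \otimes Q \rightarrow 0$. Hence there is a $\kappa \in K \otimes Q$ with $(i \otimes \id_Q)(\kappa) = (b_0 \otimes \id_Q)(\hat{\mu}_P)$. We look for $\bar{b}$ of the form $\bar{b} = b_0 - i \circ c$ with $c : \fp \rightarrow K$. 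Then $p \circ \bar{b} = b$ automatically, and
\[
(\bar{b} \otimes \id_Q)(\hat{\mu}_P) = (i \otimes \id_Q)\bigl(\kappa - (c \otimes \id_Q)(\hat{\mu}_P)\bigr).
\]
So it suffices to find $c$ with $(c \otimes \id_Q)(\hat{\mu}_P) = \kappa$. Note that we do not need $i \otimes \id_Q$ to be injective for this.

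The main step is to show that the homomorphism $\Hom(\fp, K) \rightarrow K \otimes Q$, $c \mapsto (c \otimes \id_Q)(\hat{\mu}_P)$, is surjective. This map is additive in $c$, so it is enough to hit generators $k \otimes x$ with $k \in K$ and $x \in Q$. Since $\mu_P$ is surjective and $\mu_P|_{\tp} = 0$, every $x \in Q$ is of the form $\mu_P(y)$ for some $y \in \fp$.

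Next I will make $\hat{\mu}_P$ explicit. Choose a basis $e_1, \dots, e_n$ of $\fp$. Since $\bar{\lambda}_P$ is nonsingular on $\fp$, there is a dual basis $e'_1, \dots, e'_n$ with $\bar{\lambda}_P(e'_i, e_j) = \delta_{ij}$. Unwinding Definition \ref{def:can}, I expect $\hat{\mu}_P = \sum_j e_j \otimes \mu_P(e'_j)$. Checking this formula against the isomorphisms $\Hom(\fp,Q) \cong \Hom(\fp^*,Q) \cong \fp \otimes Q$ and their conventions is the point I expect needs the most care. It also depends on the symmetry of $\lambda_P$, which makes the order of arguments irrelevant.

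Given this formula, for $k \in K$ and $y \in \fp$ define $c(z) = \bar{\lambda}_P(y, z)\, k$. Then
\[
(c \otimes \id_Q)(\hat{\mu}_P) = \sum_j \bar{\lambda}_P(y, e_j)\, k \otimes \mu_P(e'_j) = k \otimes \mu_P\Bigl(\sum_j \bar{\lambda}_P(y, e_j) e'_j\Bigr) = k \otimes \mu_P(y),
\]
because $y = \sum_j \bar{\lambda}_P(y, e_j) e'_j$ by duality. Summing such $c$'s over an expression of $\kappa$ as a sum of elementary tensors gives the required $c$, and hence $\bar{b}$.
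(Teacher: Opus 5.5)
Your proof is correct and follows the paper's argument. You lift $b$ to some $b_0$, use right exactness of $-\otimes Q$ to write $(b_0\otimes\id_Q)(\hat{\mu}_P)=(i\otimes\id_Q)(\kappa)$, and correct by $i\circ c$ with $(c\otimes\id_Q)(\hat{\mu}_P)=\kappa$. Your dual-basis formula $\hat{\mu}_P=\sum_j e_j\otimes\mu_P(e'_j)$ does check out against Definition \ref{def:can}.

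The only difference is in how $c$ is found. The paper lifts $\tilde{y}:K^*\to Q$ through the surjection $\tilde{\mu}_P:\fp^*\to Q$, using freeness of $K^*$, and takes $c=\bar{y}^*$. You instead hit elementary tensors $k\otimes\mu_P(y)$ with the rank-one maps $z\mapsto\bar{\lambda}_P(y,z)\,k$. Your version is more explicit and avoids the identification $K\otimes Q\cong\Hom(K^*,Q)$, so it needs no finiteness or freeness of $K$.
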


\begin{proof}
Since $\fp$ is free and $p$ is surjective, $b$ can be lifted to some homomorphism $\bar{b}_0 : \fp \rightarrow Z$ (ie.\ $p \circ \bar{b}_0 = b$), let $\hat{x} = (\bar{b}_0 \otimes \id_Q)(\hat{\mu}_P) \in Z \otimes Q$. Since the tensor product is right exact, it induces an exact sequence $K \otimes Q \xra{i \otimes \id} Z \otimes Q \xra{p \otimes \id} \tp \otimes Q \rightarrow 0$. We have $(p \otimes \id_Q)(\hat{x}) = (b \otimes \id_Q)(\hat{\mu}_P) = 0$, so $\hat{x} = (i \otimes \id_Q)(\hat{y})$ for some $\hat{y} \in K \otimes Q$. 

Let $\tilde{y} : K^* \rightarrow Q$ be the element corresponding to $\hat{y}$ under the isomorphism $\Hom(K^*,Q) \cong K \otimes Q$. Since $\mu_P$ is surjective and $\mu_P \big| _{\tp} = 0$, the map $\mu_P \big| _{\fp} = \bar{\mu}_P : \fp \rightarrow Q$, and hence $\tilde{\mu}_P : \fp^* \rightarrow Q$ are also surjective. So $\tilde{y}$ can be lifted to a map $\bar{y} : K^* \rightarrow F^*$ such that $\tilde{\mu}_P \circ \bar{y} = \tilde{y}$. Let $\bar{b} = \bar{b}_0 - i \circ \bar{y}^* : \fp \rightarrow Z$, then $p \circ \bar{b} = p \circ \bar{b}_0 - p \circ i \circ \bar{y}^* = p \circ \bar{b}_0 = b$. 

It remains to check that $(\bar{b} \otimes \id_Q)(\hat{\mu}_P) = 0$. The map $\bar{y}$ and its dual induce a commutative diagram
\[
\xymatrix{
F \otimes Q \ar[r]^-{\cong} \ar[d]_{\bar{y}^* \otimes \id} & \Hom(F^*,Q) \ar[d]^{\Hom(\bar{y}, \id)} \\
K \otimes Q \ar[r]^-{\cong} & \Hom(K^*,Q)
}
\]
between the identifications we used, and $\hat{\mu}_P \in F \otimes Q$ corresponds to $\tilde{\mu}_P : \fp^* \rightarrow Q$, so $(\bar{y}^* \otimes \id_Q)(\hat{\mu}_P) \in K \otimes Q$ corresponds to $\Hom(\bar{y}, \id_Q)(\tilde{\mu}_P) = \tilde{\mu}_P \circ \bar{y} = \tilde{y} : K^* \rightarrow Q$. By the definition of $\tilde{y}$ this means that $(\bar{y}^* \otimes \id_Q)(\hat{\mu}_P) = \hat{y} \in K \otimes Q$.
By the definition of $\bar{b}$ we have $(\bar{b} \otimes \id_Q)(\hat{\mu}_P) = (\bar{b}_0 \otimes \id_Q)(\hat{\mu}_P) - ((i \circ \bar{y}^*) \otimes \id_Q)(\hat{\mu}_P) = \hat{x} - (i \otimes \id_Q) \circ (\bar{y}^* \otimes \id_Q)(\hat{\mu}_P) = \hat{x} - (i \otimes \id_Q)(\hat{y}) = \hat{x} - \hat{x} = 0$, as required.
\end{proof}

\begin{thm} \label{thm:real-a-b}
Let $x = (\UM; L, V_0, V_1, p_0, p_1) \in \gc_{2q+1}(Q,v,\UP,\UP)$ be an elementary proper two-sided quasi-formation such that $I_x = \id_P$. Suppose that $\mu_P : P \rightarrow Q$ is surjective. Let $I : \UP \rightarrow \UP$ be an automorphism of the form 

a) $I = \bigl[ \begin{smallmatrix} a & 0 \\ 0 & \id \end{smallmatrix} \bigr] : \fp \oplus \tp \rightarrow \fp \oplus \tp$ for some automorphism $a$ of $\UbP$; or

b) $I = \bigl[ \begin{smallmatrix} \id & 0 \\ b & \id \end{smallmatrix} \bigr] : \fp \oplus \tp \rightarrow \fp \oplus \tp$ for some homomorphism $b : \fp \rightarrow \tp$ such that $(b \otimes \id_Q)(\hat{\mu}_P) = 0 \in \tp \otimes Q$. 

Then there is a lagrangian $L'$ in $\UM$ such that $x' = (\UM; L', V_0, V_1, p_0, p_1)$ is also elementary (and proper) and $I_{x'} = I$.
\end{thm}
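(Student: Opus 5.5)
The plan is to keep $V_0, V_1, p_0, p_1$ fixed and to build the new lagrangian $L'$ as a graph. Since $x$ is elementary, $M = L \oplus V_0$, and by Lemma \ref{lem:pr-el-equiv} d) projection along $L$ gives an isomorphism $h : V_0 \rightarrow V_1$. By the proof of the proposition following Definition \ref{def:ix}, $h$ is an isomorphism $\UV_0 \rightarrow -\UVs_1$. The hypothesis $I_x = \id_P$ says $p_1 \circ h = p_0$. Suppose we have an automorphism $\psi$ of $\UV_0$, i.e.\ preserving $\lambda|_{V_0 \times V_0}$ and $\mu|_{V_0}$, such that $p_0 \circ \psi = I \circ p_0$. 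Then we set
\[
L' = \{ h(\psi(v)) - v : v \in V_0 \} \leq M .
\]
Since $\UM$ is free here ($x \in \gc_{2q+1}$), no torsion needs to be added.

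We first check that $L'$ is a lagrangian complementary to $V_0$. Isotropy: for $v,w \in V_0$ the cross terms $\lambda(h\psi v, w)$ and $\lambda(v, h\psi w)$ vanish because $V_1 \leq V_0^{\perp}$. Also $\lambda(h\psi v, h\psi w) = -\lambda(\psi v, \psi w) = -\lambda(v,w)$, so $\lambda$ vanishes on $L'$. Next, $\mu(h\psi v - v) = \mu(\psi v) - \mu(v) = 0$. For complementarity, write $h(u) = u + l(u)$ with $l(u) \in L$. Then modulo $V_0$ the element $h\psi v - v$ is $l(\psi v)$. The map $u \mapsto l(u)$ is an isomorphism $V_0 \rightarrow M/V_0 \cong L$, because $V_1$ is also complementary to $L$ by Lemma \ref{lem:pr-el-equiv} c). Hence $V_0 \rightarrow M/V_0$, $v \mapsto [h\psi v - v]$, is an isomorphism. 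So $L'$ is a direct summand of half rank with $M = L' \oplus V_0$, and therefore $x'$ is elementary. It is proper because properness does not involve the T-lagrangian. By construction the projection along $L'$ sends $v$ to $h\psi(v)$. Hence $I_{x'}$ is induced by $p_1 h \psi = p_0 \psi = I p_0$, that is, $I_{x'} = I$.

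It remains to construct $\psi$ in the two cases. In case a), $p_0$ induces a surjection $V_0 \rightarrow \fp$, and we want $\psi$ lifting $a \oplus \id$. The free reduction of $\UP$ is the quotient of $\UV_0$ by $Z = p_0^{-1}(\tp)$. I would lift $a$ to $V_0$ using a splitting $V_0 = Z \oplus \fp'$ with $\fp' \cong \fp$: define $\psi = \id$ on $Z$ and $\psi|_{\fp'} = a$ transported to $\fp'$. Elements of $Z$ pair trivially with all of $V_0$ and have $\mu = 0$, because $p_0$ is a morphism and $\tp$ is in the radical of $\lambda_P$ and in $\Ker \mu_P$. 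Hence $\psi$ preserves $\lambda$ and $\mu$, and $p_0\psi = I p_0$. In case b), $0 \rightarrow \Ker p_0 \rightarrow Z \rightarrow \tp \rightarrow 0$ is a free resolution, since $Z \leq V_0$ is free. By Lemma \ref{lem:b-lift} there is $\bar{b} : \fp \rightarrow Z$ lifting $b$ with $(\bar{b} \otimes \id_Q)(\hat{\mu}_P) = 0$. Put $\psi = \id + \bar{b} \circ \pi \circ p_0$, where $\pi : P \rightarrow \fp$ is the projection. Then $\psi$ is invertible, since $\beta = \bar{b}\pi p_0$ satisfies $\beta^2 = 0$ because $\pi p_0(Z) = 0$. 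It satisfies $p_0\psi = I p_0$, and it preserves $\lambda$ and $\mu$ because $\Image \beta \leq Z$.

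The main obstacle is the construction of $\psi$, particularly in case b). Every step above also seems to go through for an arbitrary lift of $b$. I would therefore scrutinise this step for a hidden constraint, for instance whether $\psi$ must be compatible with $V_1$ or with $\Ker p_0 \leq V_0 \cap V_1$. If so, the choice of $\bar{b}$ with $(\bar{b} \otimes \id_Q)(\hat{\mu}_P) = 0$ from Lemma \ref{lem:b-lift} is where I would expect the hypothesis on $b$ to enter, and I would use that lift regardless.
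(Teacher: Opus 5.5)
Your construction of $\psi$ is essentially fine. In case a) you should take $\fp'$ to be the image of a lift of the inclusion $\fp \rightarrow P$; otherwise $p_0 \psi = I p_0$ can fail on the torsion component. The step that actually fails is the complementarity claim.

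Write $u = h(u) - l(u)$ with $h(u) \in V_1$ and $l(u) \in L$. Then $-l$ is the projection of $V_0$ onto $L$ along $V_1$, so $\Ker l = V_0 \cap V_1$, and $l$ is an isomorphism only if $M = V_0 \oplus V_1$. But $V_0 \cap V_1$ is in general nonzero:
\begin{compactitem}
\item since $V_1 = V_0^{\perp}$, it is the radical of $\lambda|_{V_0 \times V_0}$;
\item hence it contains $\Ker p_0$ and $Z = p_0^{-1}(\tp)$, and in fact equals $Z$ because $\UbP$ is nonsingular;
\item and $p_0(Z) = \tp$, so $Z \neq 0$ whenever $\tp \neq 0$.
\end{compactitem}
Since $\psi(V_0) = V_0$, the image of $L'$ in $M/V_0 \cong L$ is $l(V_0)$, which has rank $\rk V_0 - \rk Z$. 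So $M \neq L' \oplus V_0$ as soon as $Z \neq 0$. For your two choices of $\psi$, which fix $Z$ pointwise, $h\psi z - z = 0$ for $z \in Z$, so $L'$ is not even of half rank. A minimal example: $Q = 0$, $\UP = (\Z_2, 0, 0)$, $\UM = \UH_2$, $L = \{0\} \times \Z$, $V_0 = V_1 = \Z \times \{0\}$, $p_0 = p_1$ reduction mod $2$. Here $h = \id$ and your $L'$ is $\{0\}$. Your argument only works when $V_0 \cap V_1 = 0$, i.e.\ when $p_0$ is an isomorphism. That is the situation in the proof of Theorem \ref{thm:main-free}, where $L'$ is just an anti-diagonal.

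What is missing are the remaining $\rk Z$ directions of $L'$. You need a subgroup dual to $Z$ under $\lambda$, isotropic, orthogonal to your graph, and killed by $\mu$. The paper produces it as follows.
\begin{compactitem}
\item Let $i_0 : \fp \rightarrow V_0$ lift the inclusion $\fp \rightarrow P$, and set $Y_0 = i_0(\fp)$, $i_1 = h \circ i_0$, $Y_1 = i_1(\fp)$.
\item Put $N = Y_0 \oplus Y_1$, which is nonsingular, and $R = N^{\perp}$, so $\UM = \UN \oplus \UR$.
\item Then $L = \Delta \oplus C$ with $\Delta = (i_1 - i_0)(\fp)$ and $C = L \cap R$, and $\UR$ is hyperbolic with complementary lagrangians $Z$ and $C$.
\end{compactitem}
In case a), with $\fp' = Y_0$, your graph is $\Delta' = (i_1 a - i_0)(\fp)$. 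One can keep $C$ and take $L' = \Delta' \oplus C$.

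In case b), your graph is $\Delta' = (i_1' - i_0)(\fp)$ with $i_1' = i_1 + \bar{b}$. This moves $N$ to $N' = Y_0 \oplus i_1'(\fp)$ and $R$ to $R' = (N')^{\perp}$, and $C$ no longer lies in $R'$. One shows $R' = Z \oplus C'$, where $C'$ is the graph of a map $b' : C \rightarrow Y_1$ built from $\bar{b}^*$. The condition $(\bar{b} \otimes \id_Q)(\hat{\mu}_P) = 0$ from Lemma \ref{lem:b-lift} is exactly what gives $\mu|_{C'} = 0$. Hence $\URp$ is hyperbolic, $Z$ has a lagrangian complement $C''$ in it, and $L' = \Delta' \oplus C''$ works.

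This use of the hypothesis is unavoidable. Any lagrangian complement $L'$ of $V_0$ containing $\Delta'$ satisfies $L' = \Delta' \oplus (L' \cap R')$ and $R' = Z \oplus (L' \cap R')$, so it can exist only if $\mu|_{R'} = 0$. Your suspicion that the unused hypothesis signals a hidden problem was right. The problem lies in this missing complement to $V_0 \cap V_1$, not in $\psi$, which indeed works for any lift of $b$.
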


\begin{proof}
Write $\UM = (M, \lambda, \mu)$. Let $i_0 : \fp \rightarrow V_0$ be a splitting of the composition of surjections $V_0 \xra{p_0} P \rightarrow \fp$ and $Y_0 = \Image(i_0)$. Since $x$ is elementary, projection along $L$ is an isomorphism $V_0 \rightarrow V_1$ (see Lemma \ref{lem:pr-el-equiv} d)). Let $i_1 : \fp \rightarrow V_1$ be the composition of $i_0$ with this isomorphism and $Y_1 = \Image(i_1)$. Then the composition $\fp \xra{i_0} Y_0 \rightarrow Y_1 \xra{p_1} P$ is the restriction of $I_x = \id_P$ to $\fp$, that is, $\id_{\fp}$. In particular, $i_1 : \fp \rightarrow Y_1 \leq V_1$ is a splitting of $V_1 \xra{p_1} P \rightarrow \fp$. Since $p_0 : \UV_0 \rightarrow \UP$ and $p_1 : \UV_1 \rightarrow -\UPs$ are morphisms of extended quadratic forms, so are the isomorphisms $i_0 : \UbP \rightarrow \UY_0$ and $i_1 : -\UbPs \rightarrow \UY_1$, where $\UY_i = (Y_i, \lambda \big| _{Y_i \times Y_i}, \mu \big| _{Y_i})$.

For $i = 0,1$, let $Z_i = p_i^{-1}(\tp) \leq V_i$, equivalently, the kernel of the composition $V_i \xra{p_i} P \rightarrow \fp$. Then $V_i = Y_i \oplus Z_i$. We have $\mu(x) = \mu_P(p_i(x)) = 0$ and $\lambda(x,y) = \lambda_P(p_i(x), p_i(y)) = 0$ for every $x \in Z_i$, $y \in V_i$. Therefore $\lambda \big| _{Z_i \times Z_i} = 0$ and $\mu \big| _{Z_i} = 0$, and $\UV_i = \UY_i \oplus \UZ_i$, where $\UZ_i = (Z_i,0,0)$. Since $\UbP$ is nonsingular, so is $-\UbPs$, and hence $\UY_i$ for both $i$. This implies that $Z_i = V_i \cap V_i^{\perp} = V_i \cap V_{1-i}$, in particular $Z_0 = Z_1$, and we will denote it by $Z$. It also follows that $Y_0 \cap Y_1 = \{ 0 \}$. 

Let $N = Y_0\oplus Y_1$ and $\UN = (N, \lambda \big| _{N \times N}, \mu \big| _{N}) = \UY_0 \oplus \UY_1$, then $i_0 \oplus i_1 : \UbP \oplus (-\UbPs) \rightarrow \UN$ is an isomorphism. Since $\UN$ is nonsingular, $M = N \oplus N^{\perp}$ (see \cite[Lemma (3.1)]{milnor-husemoller73}). Let $R = N^{\perp}$, and $\UR = (R, \lambda \big| _{R \times R}, \mu \big| _R)$, then $\UM = \UN \oplus \UR$, and it follows that $\UR$ is also nonsingular. 

We have $Z = V_0 \cap V_1 = V_1^{\perp} \cap V_0^{\perp} \leq Y_1^{\perp} \cap Y_0^{\perp} = (Y_0 \oplus Y_1)^{\perp} = R$. We already saw that $\lambda \big| _{Z \times Z} = 0$ and $\mu \big| _Z = 0$. Also, $Z$ is a direct summand in $V_i$, hence in $M$, hence in $R$. Finally, $\rk Z = \rk V_i - \rk Y_i = \frac{1}{2}(\rk \UM - \rk \UN) = \frac{1}{2} \rk \UR$. Therefore $Z$ is a lagrangian in $\UR$, and hence $\UR$ is metabolic. 

Since $x$ is proper, $\Ker p_0 = \Ker p_1 \leq V_0 \cap V_1 = Z$. Moreover, the projection along $L$ restricts to $\id_Z : Z \rightarrow Z$, and hence induces $\id : Z / \Ker p_0 \rightarrow Z / \Ker p_1$. Thus $I_x \big| _{\tp} = \id_{\tp}$ is the composition $\tp \rightarrow Z / \Ker p_0 = Z / \Ker p_1 \rightarrow \tp$ of the maps induced by $p_0$ and $p_1$, and this implies that $p_0 \big| _Z = p_1 \big| _Z$. Because of this, if $L'$ is any lagrangian in $\UM$ such that $x' = (\UM; L', V_0, V_1, p_0, p_1)$ is elementary, then $I_{x'} \big| _{\tp} = \id_{\tp}$ (as the projection along $L'$ also restricts to $\id_Z$).

Let $\Delta = (i_1 - i_0)(\fp) \leq i_0(\fp) \oplus i_1(\fp) = N$. We have $\Delta \leq L$, because $i_1$ was defined using the projection along $L$. Let $C = L \cap Y_0^{\perp}$, it is the kernel of the map $L \rightarrow Y_0^*$ determined by $\ad \lambda$. This map restricts to an isomorphism $\Delta \cong Y_0^*$, because $\bar{\lambda}_P$ is nonsingular, $i_0 : \UbP \rightarrow \UY_0$ is an isomorphism, and $i_1(\fp) =  Y_1 \leq V_1$ is orthogonal to $i_0(\fp) = Y_0 \leq V_0$. It follows that $L = \Delta \oplus C$. 

Since $C$ is orthogonal to both $\Delta$ (because $C, \Delta \leq L = L^{\perp}$) and $Y_0 = i_0(\fp)$, it is also orthogonal to $Y_1 = i_1(\fp)$. Thus $C \leq R$. Since $L = \Delta \oplus C$ and $M = N \oplus R$, we get that $\Delta = L \cap N$ and $C = L \cap R$. Further, $x$ is elementary, so $M = V_0 \oplus L = Y_0 \oplus Z \oplus \Delta \oplus C$, and $Y_0 \oplus \Delta = N$ and $Z \oplus C \leq R$, which implies that $Z \oplus C = R$. It follows that $\mu \big| _R = 0$, so by Lemma \ref{lem:hyp} $\UR$ is hyperbolic.

a) Let $i_1' = i_1 \circ a : \fp \rightarrow Y_1$, then $i_1'$ is another isomorphism $-\UbPs \rightarrow \UY_1$. 
Let $\Delta' = (i_1' - i_0)(\fp)$. Equivalently, it is the image of the anti-diagonal $\Delta^*_{\id_{\fp}} \leq \fp \oplus \fp$ under the isomorphism $i_0 \oplus i_1' : \UbP \oplus (-\UbPs) \rightarrow \UN$. It is a lagrangian in $\UN$ (because $\Delta^*_{\id_{\fp}}$ is a lagrangian in $\UbP \oplus (-\UbPs)$, see \cite[Lemma 2.18]{csn-qfc26}), and $Y_0 \oplus \Delta' = N$. Let $L' = \Delta' \oplus C$, it is a lagrangian in $\UM$ and $V_0 \oplus L' = M$, so $x'$ is elementary. The projection along $L'$ restricts to $i_1' \circ i_0^{-1} : Y_0 \rightarrow Y_1'$, hence $I_{x'} \big| _{\fp} = p_1 \circ i_1' \circ i_0^{-1} \circ i_0 = p_1 \circ i_1' = p_1 \circ i_1 \circ a = \id_{\fp} \circ a = a$. Therefore $I_{x'} = I$.

b) By Lemma \ref{lem:b-lift} there is a homomorphism $\bar{b} : \fp \rightarrow Z$ such that $p \circ \bar{b} = b$ and $(\bar{b} \otimes \id_Q)(\hat{\mu}_P) = 0 \in Z \otimes Q$. Let $i_1' = i_1 + \bar{b} : \fp \rightarrow Y_1 \oplus Z = V_1$ and $Y_1' = \Image(i_1')$, then $V_1 = Y_1 \oplus Z = Y_1' \oplus Z$. 

Let $N' = Y_0\oplus Y_1'$ and $\UNp = (N', \lambda \big| _{N' \times N'}, \mu \big| _{N'})$, then $i_0 \oplus i_1' : \UbP \oplus (-\UbPs) \rightarrow \UNp$ is an isomorphism. Since $\UNp$ is nonsingular, $M = N' \oplus (N')^{\perp}$. Let $R' = (N')^{\perp}$, and $\URp = (R', \lambda \big| _{R' \times R'}, \mu \big| _{R'})$, then $\UM = \UNp \oplus \URp$, and it follows that $\URp$ is also nonsingular. 

Let $b' : C \rightarrow Y_1$ be the composition 
\[
C \xra{\ad \lambda} Z^* \xra{\bar{b}^*} \fp^* \xra{(\ad \bar{\lambda}_P)^{-1}} \fp \xra{i_1} Y_1
\]
where $C \cong Z^*$ via $\ad \lambda$, because $Z$ and $C$ are complementary lagrangians in the hyperbolic form $\UR$, and $\ad \bar{\lambda}_P$ is an isomorphism $\fp \cong \fp^*$, because $\UbP$ is nonsingular. Let $C' = (\id_C + b')(C) \leq C \oplus Y_1$, equivalently, the graph of $b'$. In particular, $C \oplus Y_1 = C' \oplus Y_1$

First we prove that $R' = Z \oplus C'$. Using that $C \oplus Y_1 = C' \oplus Y_1$ and $Y_1 \oplus Z = Y_1' \oplus Z$, we have $M = N \oplus R = Y_0 \oplus Y_1 \oplus Z \oplus C = Y_0 \oplus Y_1' \oplus Z \oplus C' = N' \oplus (Z \oplus C')$. Since also $M = N' \oplus R'$, it is enough to show that $Z \oplus C' \leq R' = (Y_0 \oplus Y_1')^{\perp}$. We have $Z = V_0 \cap V_1 = V_1^{\perp} \cap V_0^{\perp} \leq (Y_1')^{\perp} \cap Y_0^{\perp} = (Y_1' \oplus Y_0)^{\perp}$ and $C' \leq C \oplus Y_1 \leq Y_0^{\perp}$, so it remains to prove that $C' \leq (Y_1')^{\perp}$. Take any $x \in C$ and $y \in \fp$, so that $x + b'(x) \in C'$ and $i_1(y) + \bar{b}(y) \in Y_1'$. Since $C, Z \leq Y_1^{\perp}$, we have $\lambda(x,i_1(y)) = \lambda(b'(x),\bar{b}(y)) = 0$. Further, $\lambda(b'(x),i_1(y)) = \lambda(i_1 \circ (\ad \bar{\lambda}_P)^{-1} \circ \bar{b}^* \circ (\ad \lambda)(x),i_1(y)) = -\bar{\lambda}_P((\ad \bar{\lambda}_P)^{-1} \circ \bar{b}^* \circ (\ad \lambda)(x),y) = -(\bar{b}^* \circ (\ad \lambda)(x))(y) = -(\ad \lambda(x))(\bar{b}(y)) = -\lambda(x,\bar{b}(y))$. Therefore $\lambda(x + b'(x),i_1(y) + \bar{b}(y)) = 0$, showing that $C' \leq (Y_1')^{\perp}$, as required. 

Next we prove that $\mu \big| _{R'} = 0$. Since $\mu \big| _Z = 0$, this is equivalent to $\mu \big| _{C'} = 0$. Suppose that $\varphi \in Z^*$. Since $(\bar{b} \otimes \id_Q)(\hat{\mu}_P) = 0$, we have $((\varphi \circ \bar{b}) \otimes \id_Q)(\hat{\mu}_P) = (\varphi \otimes \id_Q) \circ (\bar{b} \otimes \id_Q)(\hat{\mu}_P) = 0$. By the definition of $\hat{\mu}_P$, this means that $\tilde{\mu}_P(\varphi \circ \bar{b}) = 0$. Hence $0 = \tilde{\mu}_P(\varphi \circ \bar{b}) = \tilde{\mu}_P \circ \bar{b}^*(\varphi) = \bar{\mu}_P \circ (\ad \bar{\lambda}_P)^{-1} \circ \bar{b}^*(\varphi) = \mu \circ i_1 \circ (\ad \bar{\lambda}_P)^{-1} \circ \bar{b}^*(\varphi)$, where we used that the correspondence between $\bar{\mu}_P$ and $\tilde{\mu}_P$ is given by $\ad \bar{\lambda}_P$, and that $i_1$ is a morphism of extended quadratic forms. Since this holds for any $\varphi \in Z^* \cong C$, we conclude that $\Image(b') = \Image(i_1 \circ (\ad \bar{\lambda}_P)^{-1} \circ \bar{b}^*) \leq \Ker(\mu)$. As we also have $\Image(\id_C) = C \leq \Ker(\mu)$, we get that $C' = \Image(\id_C + b') \leq \Ker(\mu)$.

Since $Z$ is a summand in $R'$ and $\rk Z = \frac{1}{2} \rk \UR = \frac{1}{2} \rk \URp$, it is a lagrangian in $\URp$, so $\URp$ is metabolic. By Lemma \ref{lem:hyp}, $\URp$ is hyperbolic. By \cite[Corollary 2.25]{csn-qfc26}, there is an isomorphism $J : \UH_{2k} \rightarrow\URp$ such that $J(\{ 0 \} \times \Z^k) = Z$, where $2k = \rk \URp$. Let $C'' = J(\Z^k \times \{ 0 \})$, it is a lagrangian complement to $Z$ in $R'$. 

Let $\Delta' = (i_1' - i_0)(\fp) = (i_0 \oplus i_1')(\Delta^*_{\id_{\fp}})$. It is a lagrangian in $\UNp$, and $Y_0 \oplus \Delta' = N'$.

Let $L' = \Delta' \oplus C''$. Since $\Delta'$ and $C''$ are lagrangians in $\UNp$ and $\URp$ respectively, $L'$ is a lagrangian in $\UM$. Moreover, $M = N' \oplus R' = Y_0 \oplus \Delta' \oplus Z \oplus C'' = V_0 \oplus L'$, so $x'$ is elementary. Since $\Delta' = (i_1' - i_0)(\fp) \leq L'$, the projection along $L'$ restricts to $i_1' \circ i_0^{-1} : Y_0 \rightarrow Y_1'$, hence $I_{x'} \big| _{\fp} = p_1 \circ i_1' \circ i_0^{-1} \circ i_0 = p_1 \circ i_1' = p_1 \circ i_1 + p_1 \circ \bar{b} = \id_{\fp} + b$. Therefore $I_{x'} = I$. 
\end{proof}

\subsection{Realisation by h-cobordisms}

Let $B$ be a simply-connected space, $\xi$ a stable vector bundle over $B$ and $q \geq 2$ an even integer. Let $M$ be a closed oriented $2q$-manifold with a normal $(q{-}1)$-smoothing $f : M \rightarrow B$ over $(B,\xi)$. 
Write $Q = H_q(B)$.
 
As before, let $\tp = \Tor H_q(M)$ and $\fp = H_q(M) / \Tor H_q(M)$, and fix a splitting $H_q(M) = \fp \oplus \tp$.
If $\mu_M \big| _{\tp} = 0$ (which automatically holds when $Q = H_q(B)$ is free), then $E_q(M,f)$ has a free reduction $\overline{E_q(M,f)}$. We define the elements $\tilde{\mu}_M \in \Hom(\fp^*, Q)$ and $\hat{\mu}_M \in \fp \otimes Q$ corresponding to $\mu_M \big| _{\fp} \in \Hom(\fp, Q)$ as in Definition \ref{def:can}.

\begin{lem} \label{lem:real-a-b-hcob}
Suppose that $H_q(B)$ is free. Let $I$ be an automorphism of $E_q(M, f)$. Assume that

a) $I = \bigl[ \begin{smallmatrix} a & 0 \\ 0 & \id \end{smallmatrix} \bigr] : \fp \oplus \tp \rightarrow \fp \oplus \tp$ for some automorphism $a$ of $\overline{E_q(M,f)}$; or

b) $I = \bigl[ \begin{smallmatrix} \id & 0 \\ b & \id \end{smallmatrix} \bigr] : \fp \oplus \tp \rightarrow \fp \oplus \tp$ for some homomorphism $b : \fp \rightarrow \tp$ such that $(b \otimes \id_Q)(\hat{\mu}_M) = 0 \in \tp \otimes Q$.

Then there is a normal bordism $F' : W' \rightarrow B$ between $f$ and itself, normally bordant to the trivial normal bordism, such that $W'$ is an h-cobordism and $I_{W'} = I$.
\end{lem}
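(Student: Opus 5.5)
We start from the trivial normal bordism $F_0 = f \circ \mathrm{pr} : M \times [0,1] \rightarrow B$, which is an h-cobordism with $I_{M \times [0,1]} = \id$. After surgery below the middle dimension we pass to a $q$-connected normal bordism $F : W \rightarrow B$, normally bordant to $F_0$ rel boundary. We then take a two-sided quasi-formation representing $\theta^{f,f}_{W,F}$ and work with its free reduction. By Proposition \ref{prop:bord-hcob-elem}, $[\theta^{f,f}_{W,F}]$ is elementary, because $F_0$ itself is an h-cobordism. Using Theorem \ref{thm:theta-bordism-inv} we may equally apply everything to $F_0$ directly after stabilising, which gives an elementary proper representative $\theta^{f,f}_{W_0,F_0}$. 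By Proposition \ref{prop:hcob-elem} this representative induces $I_{M\times[0,1]} = \id$. By Proposition \ref{prop:obstr-proper} and Lemma \ref{lem:I-stab-free}, passing to the free reduction $\bar x = (\UM; L, V_0, V_1, p_0, p_1) \in \gc_{2q+1}(Q,v,E_q(M,f),E_q(M,f))$ preserves elementarity, properness and $I_{\bar x} = \id$. Here $Q = H_q(B)$ and $v = \hat v_q(\xi)$.

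Next we apply Theorem \ref{thm:real-a-b}. Its hypotheses need $\mu_P = H_q(f)$ surjective, which holds because $f$ is $q$-connected. They also need $\UP = E_q(M,f)$ nonsingular with $\mu_P|_{\tp} = 0$; the latter holds since $Q$ is free. Cases a) and b) of the lemma match cases a) and b) of Theorem \ref{thm:real-a-b}. The theorem gives a lagrangian $L'$ such that $\bar x' = (\UM; L', V_0, V_1, p_0, p_1)$ is elementary and proper with $I_{\bar x'} = I$.

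The main step is to show that $\bar x'$ is equivalent to $\bar x$, so that it is realised by a normal bordism. This follows the argument in the proof of Theorem \ref{thm:main-free}. By Theorem \ref{thm:jacobi},
\[
[\UM; L', V_0, V_1, p_0, p_1] = [\UM; L', L, L, 0, 0] \oplus [\UM; L, V_0, V_1, p_0, p_1].
\]
The first summand lies in $L_{2q+1}(Q,v,\UP,\UP)$, which vanishes by Proposition \ref{prop:L-forget} b) since $Q$ is free. By Proposition \ref{prop:L-action} this gives $\bar x' \sim \bar x$.

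Then we lift back to the torsion setting. By Proposition \ref{prop:red-surj} b) there is $y \in \gc^T_{2q+1}(B,\xi,f,f)$ with $y \sim \theta^{f,f}_{W,F}$ and $\bar y \si \bar x'$. Lemma \ref{lem:pr-el-equiv} shows that $y$ is elementary, since $\bar y$ is. Properness passes to $y$ because it is $\sim$-invariant and $\theta$ is proper. Lemma \ref{lem:I-stab-free} gives $I_y = I_{\bar y} = I$.

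Finally we realise $y$ geometrically. Theorem \ref{thm:theta-equiv-real} gives $F' : W' \rightarrow B$, normally bordant to $F$ and hence to the trivial bordism, with $\theta^{f,f}_{W',F'} \si y$. By Proposition \ref{prop:hcob-elem}, $W'$ is an h-cobordism and $I_{W'} = I_y = I$.

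The delicate point is the bookkeeping with free reductions. We must check that the $L$-group vanishing and the Jacobi identity, which are stated in the free case, transfer through Proposition \ref{prop:red-surj} without changing $I$. If instead we work directly with the torsion quasi-formation, the alternative is to apply Theorem \ref{thm:real-a-b} to $\bar x$ and use the fact that modifying the lagrangian commutes with adding $R = \Tor M$ to $L$.
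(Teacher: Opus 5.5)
Your proposal is correct and follows essentially the same route as the paper. You take $\theta^{f,f}$ of the trivial bordism, which is elementary, proper and induces $\id$; pass to its free reduction; apply Theorem \ref{thm:real-a-b}; use Theorem \ref{thm:jacobi} together with $L_{2q+1}(B,\xi,f,f) \cong 0$; lift back via Proposition \ref{prop:red-surj} b); and realise the result with Theorem \ref{thm:theta-equiv-real} and Proposition \ref{prop:hcob-elem}.

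The initial surgery below the middle dimension and the appeal to Proposition \ref{prop:bord-hcob-elem} are superfluous. Since $f$ is $q$-connected, $M \times [0,1] \rightarrow B$ is already $q$-connected, so $\theta^{f,f}_{M \times [0,1],F_0}$ is defined directly. It is this specific representative, and not mere elementarity of the class, that gives $I = \id$.
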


\begin{proof}
Let $F : W = M \times [0,1] \rightarrow B$ be the trivial normal bordism between $f$ and itself. Let $\theta^{f,f}_{W,F} = (\UM; L, V_0, V_1, p_0, p_1) \in \s^T_{2q+1}(B,\xi,f,f)$ and define its free reduction $\bar{\theta}^{f,f}_{W,F} = (\UbM; \bar{L}; \bar{V}_0, \bar{V}_1, \bar{p}_0, \bar{p}_1) \in \s_{2q+1}(B,\xi,f,f)$. By Propositions \ref{prop:obstr-proper} and \ref{prop:hcob-elem}, $\theta^{f,f}_{W,F}$ is both proper and elementary, and $I_{\theta^{f,f}_{W,F}} = I_W = \id$, using that $W$ is the trivial cobordism. By Lemmas \ref{lem:pr-el-equiv} and \ref{lem:I-stab-free} $\bar{\theta}^{f,f}_{W,F}$ is also proper and elementary, and $I_{\bar{\theta}^{f,f}_{W,F}} = I_{\theta^{f,f}_{W,F}} = \id$.

We can apply Theorem \ref{thm:real-a-b} to $\UP = E_q(M, f)$, $x = \bar{\theta}^{f,f}_{W,F}$ and $I$, because $\mu_P = \mu_M$ is surjective, because $f$ is $q$-connected. We get that there is a lagrangian $\bar{L}'$ in $\UbM$ such that $\hat{\theta}' = (\UbM; \bar{L}'; \bar{V}_0, \bar{V}_1, \bar{p}_0, \bar{p}_1)$ is elementary and $I_{\hat{\theta}'} = I$. By Theorem \ref{thm:jacobi}
\[
[\UbM; \bar{L}; \bar{L}', \bar{L}', 0, 0] \oplus [\UbM; \bar{L}'; \bar{V}_0, \bar{V}_1, \bar{p}_0, \bar{p}_1] = [\UbM; \bar{L}; \bar{V}_0, \bar{V}_1, \bar{p}_0, \bar{p}_1] \in \ell_{2q+1}(B,\xi,f,f)
\]
Here $[\UbM; \bar{L}; \bar{L}', \bar{L}', 0, 0] \in L_{2q+1}(B,\xi,f,f) \cong 0$ using that $Q$ is free (see Proposition \ref{prop:L-forget}), so $\hat{\theta}' \sim \bar{\theta}^{f,f}_{W,F}$.

By Proposition \ref{prop:red-surj} b) there is a $\theta' \in \s^T_{2q+1}(B,\xi,f,f)$ such that $\theta^{f,f}_{W,F} \sim \theta'$ and $\bar{\theta}' \si \hat{\theta}'$. This implies that $\theta'$ is proper and $\bar{\theta}'$ is elementary, and by Lemma \ref{lem:pr-el-equiv} b) $\theta'$ is elementary too. By Lemma \ref{lem:I-stab-free} $I_{\theta'} = I_{\bar{\theta}'} = I_{\hat{\theta}'} = I$.

By Theorem \ref{thm:theta-equiv-real} there is a $q$-connected normal bordism $F' : W' \rightarrow B$, normally bordant to $F$, such that $\theta^{f,f}_{W',F'} \si \theta'$. By Proposition \ref{prop:hcob-elem} $W'$ is an h-cobordism and $I_{W'} = I_{\theta'} = I$. 
\end{proof}

\begin{thm} \label{thm:real-aut-diff}
Suppose that $H_q(B)$ is free. Let $I = \bigl[ \begin{smallmatrix} a & 0 \\ b & c \end{smallmatrix} \bigr] : \fp \oplus \tp \rightarrow \fp \oplus \tp$ be an automorphism of $E_q(M,f)$. Then the following are equivalent:
\begin{compactenum}
\item There is a normal bordism $F' : W' \rightarrow B$ between $f$ and itself, normally bordant to the trivial normal bordism, such that $W'$ is an h-cobordism and $I_{W'} = I$.
\item There is a normal bordism $F' : W' \rightarrow B$ between $f$ and itself, such that $W'$ is an h-cobordism and $I_{W'} = I$.
\item $I$ extends to an automorphism $\II$ of $\EE_q(M,f)$. 
\item $(b \otimes \id_Q)(\hat{\mu}_M) = 0$ and $c = \id_{\tp}$.
\end{compactenum}
\end{thm}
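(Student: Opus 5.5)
I will prove the cycle of implications $1 \Rightarrow 2 \Rightarrow 3 \Rightarrow 4 \Rightarrow 1$. The first is trivial. For $2 \Rightarrow 3$, if $W'$ is an h-cobordism over $B$ inducing $I$, then the composition $M \to W' \leftarrow M$ of the inclusion and a homotopy inverse of the other inclusion is an orientation-preserving homotopy equivalence $h : M \to M$. It satisfies $f \circ h \simeq f$, because $F'$ restricts to $f$ on both ends, and $H_q(h) = I_{W'} = I$. So Proposition \ref{prop:ind-sys} gives the extension $\II = (I, \{H_q(h;\Z_m)\})$.

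For $3 \Rightarrow 4$, I use that $f$ is $q$-connected, so $H_{q-1}(f)$ is an isomorphism. Proposition \ref{prop:c-id} then gives $c = \id_{\tp}$. To get the condition on $b$, I factor $I$ as
\[
I = \begin{bmatrix} \id & 0 \\ b' & \id \end{bmatrix} \circ \begin{bmatrix} a & 0 \\ 0 & \id \end{bmatrix}, \qquad b' = b \circ a^{-1}.
\]
The right factor $I_a = \bigl[\begin{smallmatrix} a & 0 \\ 0 & \id\end{smallmatrix}\bigr]$ is an automorphism of $E_q(M,f)$, since $a$ is an automorphism of the free reduction and $\mu|_{\tp}=0$, $\lambda|_{\tp}$ is trivial. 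By Lemma \ref{lem:real-a-b-hcob} a), $I_a$ is realised by an h-cobordism, so by $2 \Rightarrow 3$ it extends to an automorphism $\II_a$ of $\EE_q(M,f)$. Hence $I \circ I_a^{-1} = \bigl[\begin{smallmatrix} \id & 0 \\ b' & \id\end{smallmatrix}\bigr]$ extends to $\II \circ \II_a^{-1}$. Theorem \ref{thm:b-mu} then gives $(b' \otimes \id_Q)(\hat\mu_M) = 0$.

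It remains to pass from $b'$ back to $b$. Since $a$ preserves $\bar\lambda$ and $\bar\mu$, the element $\hat\mu_M$ is $a$-invariant. Indeed, $\hat\mu_M$ corresponds to $\bar\mu \circ (\ad\bar\lambda)^{-1}$, and both data are preserved, so $(a \otimes \id_Q)(\hat\mu_M) = \hat\mu_M$. Checking this invariance carefully through the identifications of Definition \ref{def:can} is the one computation needing care. Granting it,
\[
(b \otimes \id)(\hat\mu_M) = (b' \otimes \id)(a \otimes \id)(\hat\mu_M) = (b' \otimes \id)(\hat\mu_M) = 0 .
\]

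For $4 \Rightarrow 1$, I use the same factorisation with $c = \id$. The invariance of $\hat\mu_M$ under $a$ shows that $b' = b a^{-1}$ also satisfies $(b' \otimes \id_Q)(\hat\mu_M) = 0$. Lemma \ref{lem:real-a-b-hcob} then provides h-cobordisms $W_a$ and $W_b$ over $B$, each normally bordant to the trivial bordism, inducing $I_a$ and $\bigl[\begin{smallmatrix}\id&0\\b'&\id\end{smallmatrix}\bigr]$ respectively. I stack them as $W' = W_a \cup_M W_b$. This is an h-cobordism, and $I_{W'} = I_{W_b} \circ I_{W_a} = I$ by functoriality of the inclusion isomorphisms on $H_q$. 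The glued normal bordism is normally bordant rel boundary to the trivial bordism glued with itself, which is again trivial, since normal bordisms rel boundary can be glued.

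The main obstacle is this gluing step: one must check that the bundle data match along the common copy of $M$ and that normal bordism rel boundary is compatible with composition of cobordisms. If this is awkward, I would fall back on the algebra: apply Theorem \ref{thm:real-a-b} a) to $x = \bar\theta^{f,f}_{W,F}$, then apply Theorem \ref{thm:real-a-b} b) to the resulting elementary proper two-sided quasi-formation composed with $I_a^{-1}$ on $p_1$. This requires re-verifying that the hypotheses there only need $I_x$ to be a suitable automorphism. The gluing route looks cleaner.
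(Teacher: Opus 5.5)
Your proof is correct and follows the paper's argument: Proposition \ref{prop:ind-sys}, then Proposition \ref{prop:c-id}, then Lemma \ref{lem:real-a-b-hcob} a) to make the diagonal factor extend, then Theorem \ref{thm:b-mu}, and finally gluing the two h-cobordisms from Lemma \ref{lem:real-a-b-hcob}. The only difference is that the paper factors in the other order, $I = \bigl[ \begin{smallmatrix} a & 0 \\ 0 & \id \end{smallmatrix} \bigr] \circ \bigl[ \begin{smallmatrix} \id & 0 \\ b & \id \end{smallmatrix} \bigr]$, so the shear factor has entry $b$ itself and the $a$-invariance of $\hat{\mu}_M$ is never needed; that invariance does hold, by the computation you sketch.
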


\begin{proof}
(1) $\Rightarrow$ (2) is trivial, while (2) $\Rightarrow$ (3) follows from Proposition \ref{prop:ind-sys}.

(3) $\Rightarrow$ (4):
Note that $H_{q-1}(f)$ is an isomorphism, because $f$ is $q$-connected. By Proposition \ref{prop:c-id} $c = \id_{\tp}$. Thus $I = I_1 \circ I_2$ for $I_1 = \bigl[ \begin{smallmatrix} a & 0 \\ 0 & \id \end{smallmatrix} \bigr]$ and $I_2 = \bigl[ \begin{smallmatrix} \id & 0 \\ b & \id \end{smallmatrix} \bigr]$. By Lemma \ref{lem:real-a-b-hcob} a) there is a normal bordism $F_1 : W_1 \rightarrow B$ with $W_1$ an h-cobordism and $I_{W_1} = I_1$. By Proposition \ref{prop:ind-sys} $I_1$ extends to an automorphism $\II_1$ of $\EE_q(M,f)$. Hence $I_2 = I_1^{-1} \circ I$ extends to an automorphism of $\EE_q(M,f)$, namely $\II_1^{-1} \circ \II$. By Theorem \ref{thm:b-mu} $(b \otimes \id_Q)(\hat{\mu}_M) = 0$. 

(4) $\Rightarrow$ (1): We write $I = I_1 \circ I_2$ as above. By Lemma \ref{lem:real-a-b-hcob} there is a normal bordism $F_j : W_j \rightarrow B$, normally bordant to the trivial normal bordism, with $W_j$ an h-cobordism and $I_{W_j} = I_j$, for $j=1,2$. We construct the normal bordism $F' : W' \rightarrow B$ by gluing $F_1$ and $F_2$, then $W'$ is also an h-cobordism and $I_{W'} = I_1 \circ I_2 = I$. Since $F_1$ and $F_2$ are normally bordant to the trivial normal bordism, the same is true for $F'$. 
\end{proof}

\section{Realising isomorphisms when $H_{q-1}(B)$ contains torsion} \label{s:real-tor}

\begin{proof}[Proof of Theorem \ref{thm:main-torsion}]
The existence of an $\II$ is necessary by Proposition \ref{prop:ind-sys}.

Now assume that an $\II$ exists. By the special case of the Q-form conjecture \cite[Theorem 1.2]{csn-qfc26} the existence of $I$ implies that there is a normal bordism $F_1 : W_1 \rightarrow B$ between $f_0$ and $f_1$, normally bordant to $F_0$, such that $W_1$ is an h-cobordism. The homotopy equivalence $M_0 \rightarrow M_1$ determined by $W_1$ induces an isomorphism $\JJ = (J, \{ J_m \} ) : \EE_q(M_0, f_0) \rightarrow \EE_q(M_1, f_1)$. Then $J^{-1} \circ I$ is an automorphism of $E_q(M_0, f_0)$, which extends to the automorphism $\JJ^{-1} \circ \II$ of $\EE_q(M_0, f_0)$. So by Theorem \ref{thm:real-aut-diff} there is a normal bordism $F_2 : W_2 \rightarrow B$ between $f_0$ and itself, normally bordant to the trivial normal bordism, such that $W_2$ is an h-cobordism and $I_{W_2} = J^{-1} \circ I$. We construct the normal bordism $F' : W' \rightarrow B$ between $f_0$ and $f_1$ by gluing $F_1$ and $F_2$, then $W'$ is also an h-cobordism and $I_{W'} = J \circ J^{-1} \circ I = I$. Since $F_1$ and $F_2$ are normally bordant to $F_0$ and the trivial normal bordism respectively, $F'$ is also normally bordant to $F_0$. 
\end{proof}

\section{Diffeomorphisms of complete intersections} \label{s:ci}

We will prove Theorem \ref{thm:main-ci}. First we recall a few facts about complete intersections. For $\dd = \left\{ d_1, d_2, \ldots , d_k \right\}$ let $X_q(\dd)$ be the complete intersection that is the transverse intersection of hypersurfaces of degrees $d_1, d_2, \ldots , d_k$ in ${\C}P^{q+k}$. It follows from the Lefschetz Hyperplane Theorem that the inclusion $i : X_q(\dd) \rightarrow {\C}P^{q+k}$ is $q$-connected. Moreover, $i$ is covered by a bundle map $\nu_{X_q(\dd)} \rightarrow \xi_q(\dd) = -(q+k+1)\gamma \oplus \gamma^{\otimes d_1} \oplus \ldots \oplus \gamma^{\otimes d_k}$, where $\gamma$ is the conjugate of the tautological complex line bundle over ${\C}P^{q+k}$, making it a normal $(q{-}1)$-smoothing. Let $y \in H^2({\C}P^{q+k}) \cong \Z$ be a generator and let $x = i^*(y) \in H^2(X_q(\dd))$.

\begin{lem} \label{lem:ci-conj}
There is a diffeomorphism $g$ of $X_q(\dd)$ such that $g^*(x) = -x$. 
\end{lem}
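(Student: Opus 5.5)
The plan is to take $g$ to be complex conjugation. By Thom's result the diffeomorphism type of $X_q(\dd)$ depends only on $\dd$, so we may choose the defining hypersurfaces freely, as long as the intersection is transverse. We will choose them to be cut out by homogeneous polynomials $P_1, \ldots, P_k$ with \emph{real} coefficients. Then the conjugation map $c : {\C}P^{q+k} \rightarrow {\C}P^{q+k}$, $[z_0 : \ldots : z_{q+k}] \mapsto [\bar{z}_0 : \ldots : \bar{z}_{q+k}]$, preserves $X_q(\dd)$, and we set $g = c|_{X_q(\dd)}$. This is a smooth involution, hence a diffeomorphism.

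The first step is to check that real coefficients can be chosen compatibly with transversality. Smoothness and transversality of the intersection are open, dense (Zariski-open) conditions on the coefficient space $\prod_j \C^{N_j}$. The real points $\prod_j \R^{N_j}$ are Zariski-dense in this space, so they meet any nonempty Zariski-open subset. Hence some real choice of $P_1, \ldots, P_k$ gives a smooth complete intersection. If $P_j$ has real coefficients, then $P_j(\bar z) = \overline{P_j(z)}$, so $c$ preserves the zero set of each $P_j$ and therefore preserves $X_q(\dd)$.

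The second step is to compute $c^*(y) = -y$ on $H^2({\C}P^{q+k})$. We restrict to ${\C}P^1 \subset {\C}P^{q+k}$, the line where all coordinates except $z_0, z_1$ vanish; it is $c$-invariant, and $H^2({\C}P^{q+k}) \rightarrow H^2({\C}P^1)$ is an isomorphism. On ${\C}P^1 \cong S^2$, conjugation is the reflection $z \mapsto \bar{z}$, which has degree $-1$. So $c^* y = -y$. Since $i \circ g = c \circ i$, naturality gives $g^*(x) = g^* i^* y = i^* c^* y = -i^* y = -x$.

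The main obstacle is the genericity argument in the first step, namely that real coefficients still give a transverse intersection. The Zariski-density argument above handles it. Alternatively, one can use an explicit real example such as Fermat-type equations $\sum_i a_{ji} z_i^{d_j}$ with generic real $a_{ji}$, which yield a transverse intersection. Note that when $q$ is odd, $g$ is orientation-reversing; this does not matter here, since the lemma only asks for a diffeomorphism.
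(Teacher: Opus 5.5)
Your proof is correct, but it takes a somewhat different route from the paper's.

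The paper starts from an arbitrary representative $X$ cut out by $p_1, \ldots, p_k$. On such an $X$, complex conjugation $J$ of ${\C}P^{q+k}$ only gives a diffeomorphism $j : X \to X'$ onto the conjugate representative, cut out by $\bar{p}_1, \ldots, \bar{p}_k$, with $j^*(x') = -x$. Thom's path from $(p_j)$ to $(\bar{p}_j)$ in the space of admissible tuples gives an isotopy, hence a diffeomorphism $h : X \to X'$ with $h^*(x') = x$, and the paper sets $g = h^{-1} \circ j$.

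You instead choose a conjugation-invariant representative from the start, so $X' = X$ and no $h$ is needed. The cost is your genericity argument, which is sound: the bad set is the projection of a closed incidence variety along the proper factor ${\C}P^{q+k}$, hence a proper Zariski-closed set, and $\R^N$ is Zariski-dense in $\C^N$. What your route buys is an explicit antiholomorphic involution, and your restriction to ${\C}P^1$ justifies $c^*(y) = -y$, which the paper only asserts. The paper's route applies directly to whichever representative was used to define $x$.

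To finish your version, say explicitly how the result transfers to that given representative $X_0$. Thom's isotopy gives $h : X_0 \to X$ with $h^*(x_X) = x_{X_0}$. For $q > 2$, any diffeomorphism $h$ gives $h^*(x_X) = \pm x_{X_0}$, since $H^2 \cong \Z$ is generated by $x$. In either case $h^{-1} \circ g \circ h$ sends $x_{X_0}$ to $-x_{X_0}$, regardless of the sign. So both proofs rely on Thom's argument at essentially the same point.
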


\begin{proof}
Fix a representative $X \subset {\C}P^{q+k}$ of $X_q(\dd)$, given by homogeneous polynomials $p_1, p_2, \ldots , p_k \in \C[x_0, x_1, \ldots, x_{q+k}]$ with degrees $d_1, d_2, \ldots , d_k$. Then the complex conjugate polynomials $\bar{p}_1, \bar{p}_2, \ldots , \bar{p}_k$ define another representative, which we will denote by $X'$. Let $i' : X' \rightarrow {\C}P^{q+k}$ be its inclusion, and $x' = (i')^*(y) \in H^2(X')$. By Thom's argument there is a path between $(p_1, p_2, \ldots , p_k)$ and $(\bar{p}_1, \bar{p}_2, \ldots , \bar{p}_k)$ in the space of tuples of polynomials that define representatives of $X_q(\dd)$. This path gives an isotopy between $i$ and an embedding $X \rightarrow {\C}P^{q+k}$ whose image is $X'$, that is, a diffeomorphism $h : X \rightarrow X'$. Since $i$ and $i' \circ h : X \rightarrow {\C}P^{q+k}$ are isotopic, we have $h^*(x') = h^* \circ (i')^*(y) = i^*(y) = x$. 

Complex conjugation in all $q+k+1$ coordinates defines a diffeomorphism $J : {\C}P^{q+k} \rightarrow {\C}P^{q+k}$ such that $J^*(y) = -y$. It restricts to a diffeomorphism $j = J \big| _X : X \rightarrow X'$ such that $i' \circ j = J \circ i$. Hence $j^*(x') = j^* \circ (i')^*(y) = i^* \circ J^*(y) = i^*(-y) = -x$. Let $g = h^{-1} \circ j : X \rightarrow X$, then $g^*(x) = j^* \circ (h^*)^{-1}(x) = j^*(x') = -x$. 
\end{proof}

\begin{proof}[Proof of Theorem \ref{thm:main-ci}]
Since $q > 2$ and $i$ is $q$-connected, it induces an isomorphism $H^2(X_q(\dd)) \cong H^2({\C}P^{q+k}) \cong \Z$. So any automorphism of $H^*(X_q(\dd))$ sends $x$ to either $x$ or $-x$. By Lemma \ref{lem:ci-conj} there is a diffeomorphism of $X_q(\dd)$ that sends $x$ to $-x$. Therefore it is enough to prove that every automorphism of $H^*(X_q(\dd))$ that fixes $x$ can be realised by a diffeomorphism. 

Using that $i$ is $q$-connected, that $\left< y^q, i_*([X_q(\dd)]) \right> = d_1 d_2 \ldots d_k$, and Poincar\'e-duality, we see that $H^j(X_q(\dd)) \cong \Z$ for $0 \leq j \leq 2q$ even, $j \neq q$, and $x^{j/2} \neq 0 \in H^j(X_q(\dd))$. Moreover, $H^j(X_q(\dd))$ is trivial if $j$ is odd, and free if $j = q$. It follows that any automorphism of $H^*(X_q(\dd))$ that fixes $x$ is the identity on $H^j(X_q(\dd))$ for all $j \neq q$, in particular for $j=2q$, and therefore it preserves the cup product pairing $H^q(X_q(\dd)) \times H^q(X_q(\dd)) \rightarrow \Z$.

Now let $\alpha$ be an automorphism of $H^*(X_q(\dd))$ that fixes $x$. Since $H^q(X_q(\dd))$ is free, $H_q(X_q(\dd))$ is its ($\Hom$-)dual. Let $A : H_q(X_q(\dd)) \rightarrow H_q(X_q(\dd))$ be the dual of $\alpha \big| _{H^q(X_q(\dd))}$. Since $\alpha$ preserves the cup product, $A$ preserves the intersection form $\lambda_{X_q(\dd)}$. Moreover, a diffeomorphism of $X_q(\dd)$ induces $\alpha$ on $H^*(X_q(\dd))$ if and only if it preserves $x$ and induces $A$ on $H_q(X_q(\dd))$.

For any $z \in H_q(X_q(\dd))$ we have $\left< y^{q/2}, i_* \circ A(z) \right> = \left< x^{q/2}, A(z) \right> = \left< \alpha(x^{q/2}), A(z) \right> = \left< x^{q/2}, z \right> = \left< y^{q/2}, i_*(z) \right>$, where we used that $\alpha$ fixes $x^{q/2}$, and $A$ is its dual. Since $\left< y^{q/2}, \cdot \right> : H_q({\C}P^{q+k}) \rightarrow \Z$ is an isomorphism, $i_* \circ A(z) = i_*(z)$. This means that $A$ is an automorphism of the Q-form $E_q(X_q(\dd),i) = (H_q(X_q(\dd)), \lambda_{X_q(\dd)}, i_*)$.

By Theorem \ref{thm:main-free} (using that $H_q({\C}P^{q+k}) \cong \Z$ and $H_{q-1}({\C}P^{q+k}) \cong 0$ are free) there is a diffeomorphism $g$ of $X_q(\dd)$ inducing $A$. Moreover, $g$ is a diffeomorphism over ${\C}P^{q+k}$, ie.\ $i \circ g \simeq i$, so $g^*(x) = g^* \circ i^*(y) = i^*(y) = x$. By our earlier observations, $g$ induces $\alpha$.
\end{proof}

\bibliographystyle{amsinitial}
\bibliography{ref}

\end{document}